\newtheorem{thm}{Theorem}[section]
\newtheorem{prop}[thm]{Proposition}
\newtheorem{lem}[thm]{Lemma}
\newtheorem{cor}[thm]{Corollary}
\newtheorem{rem}{Remark}
\numberwithin{equation}{section}
\newcommand{\R}{{\mathbb R}}
\renewcommand{\S}{{\mathbb S}}
\newcommand{\N}{{\mathbb N}}
\renewcommand{\L}{{\mathcal L\,}}
\newcommand{\Q}{{\mathcal Q}}
\newcommand{\D}{{\mathsf D}}
\newcommand{\be}[1]{\begin{equation}\label{#1}}
\newcommand{\ee}{\end{equation}}
\renewcommand{\(}{\left(}
\renewcommand{\)}{\right)}
\newcommand{\irdmu}[1]{\int_{\R^d}#1\,d\mu}
\newcommand{\irdmugen}[1]{\int_{(0,\infty)\times\M}#1\,d\mu}
\newcommand{\irdmugenn}[1]{\int#1\,d\mu}
\newcommand{\nrmr}[2]{\|#1\|_{\mathrm L^{#2}(\R)}}
\newcommand{\ird}[1]{\int_{\R^d}#1\,dx}
\newcommand{\irdsph}[2]{\int_0^\infty\int_{\S^{d-1}}#1\;r^{\kern1pt #2}\,\frac{dr}{r}\;d\omega}
\newcommand{\nrmRd}[2]{\|{#1}\|_{\mathrm L^{#2}(\R^d)}}
\newcommand{\isph}[1]{\int_{\S^{d-1}}#1\;d\omega}
\newcommand{\Lap}{\Delta}
\newcommand{\fv}{f}
\newcommand{\nrmcnd}[2]{\|{#1}\|_{\mathrm L^{#2}(\mathcal C_1)}}
\newcommand{\nrmcndgen}[2]{\|{#1}\|_{\mathrm L^{#2}(\mathcal C)}}
\newcommand{\iCgen}[1]{\int_{\mathcal C}{#1}\,d\nu}
\newcommand{\M}{\mathfrak M}
\newcommand{\iM}[1]{\int_{\M}{#1}\,d\kern1pt v_g}
\newcommand{\kk}{\mathsf k}
\newcommand{\nrml}[2]{\|{#1}\|_{\mathrm L^{#2}(\R)}}
\begin{document}

\title[Flows on cylinders, symmetry and rigidity]{Rigidity versus symmetry breaking via nonlinear flows on cylinders and Euclidean spaces}
% Symmetry, symmetry breaking, rigidity,and nonlinear diffusion equations

\author{Jean Dolbeault}
\address{\hspace*{-12pt}Jean Dolbeault: Ceremade (UMR CNRS no. 7534), Universit\'e Paris-Dauphine, Place de Lattre de Tassigny, 75775 Paris 16, France}
\email{dolbeaul@ceremade.dauphine.fr}

\author{Maria J.~Esteban}
\address{\hspace*{-12pt}Maria J.~Esteban: Ceremade (UMR CNRS no. 7534), Universit\'e Paris-Dauphine, Place de Lattre de Tassigny, 75775 Paris 16, France}
\email{esteban@ceremade.dauphine.fr}

\author{Michael Loss}
\address{\hspace*{-12pt}Michael Loss: Skiles Building, Georgia Institute of Technology, Atlanta GA 30332-0160, USA}
\email{loss@math.gatech.edu}

\date{\today}
\begin{abstract} This paper is motivated by the characterization of the optimal symmetry breaking region in Caffarelli-Kohn-Nirenberg inequalities. As a consequence, optimal functions and sharp constants are computed in the symmetry region. The result solves a longstanding conjecture on the optimal symmetry range.\par
As a byproduct of our method we obtain sharp estimates for the principal eigenvalue of Schr\"odinger operators on some non-flat non-compact manifolds, which to the best of our knowledge are new.
\par
The method relies on generalized entropy functionals for nonlinear diffusion equations. It opens a new area of research for approaches related to \emph{carr\'e du champ} methods on non-compact manifolds. However key estimates depend as much on curvature properties as on purely nonlinear effects. The method is well adapted to functional inequalities involving simple weights and also applies to general cylinders. Beyond results on symmetry and symmetry breaking, and on optimal constants in functional inequalities, rigidity theorems for nonlinear elliptic equations can be deduced in rather general settings.\end{abstract}

\keywords{Caffarelli-Kohn-Nirenberg inequalities; symmetry; symmetry breaking; optimal constants; rigidity results; fast diffusion equation; carr\'e du champ; bifurcation; instability; Emden-Fowler transformation; cylinders; non-compact manifolds; Laplace-Beltrami operator; spectral estimates; Keller-Lieb-Thirring estimate; Hardy inequality}
% Schr\"odinger operator; eigenvalues; Keller-Lieb-Thirring inequalities; Sobolev inequality; Gagliardo-Nirenberg inequality; interpolation

\subjclass[2010]{
35J20;
% 35J20: Variational methods for second-order, elliptic equations
49K30;
% 49K30 Optimal solutions belonging to restricted classes
53C21}

\maketitle
\thispagestyle{empty}

%%%%%%%%%%%%%%%%%%%%%%%%%%%%%%%%%%%%%%%%%%%%%%%%%%%%%%%%%%%%%%%%%%%%%%%%%%%%%%%
%%%%%%%%%%%%%%%%%%%%%%%%%%%%%%%%%%%%%%%%%%%%%%%%%%%%%%%%%%%%%%%%%%%%%%%%%%%%%%%
\section{Introduction}\label{Sec:Intro}

Symmetry and the breaking thereof is a central theme in mathematics and the physical sciences. It is well known that symmetric energy functionals might have states of lowest energy that may or may not have these symmetries. In the latter case one says, in the language of physics, that the symmetry is \emph{broken}, \emph{i.e.}, the symmetry group of the minimizer is smaller than the symmetry group of the functional. Needless to say, for computing the optimal value of the functional it is of advantage that an optimizer be symmetric. In other contexts the breaking of symmetry leads to interesting phenomena such as crystals in which the translation invariance of a system is broken. Thus, it is of central importance to decide what symmetry types, if any, an optimizer has.

Very often functionals depend on parameters and it might be that in one parameter range the lowest energy state has the full symmetry of the functional, while in other parts of the parameter region the symmetry is broken. Thus, in each region of the parameter space the minimizers possess a fixed symmetry or, to use a term from physics, a \emph{phase} and the collection of these various phases constitute a \emph {phase diagram}.

To decide whether a minimizer has the full symmetry or not can be difficult. To show that symmetry is broken one can minimize the functional in the \emph{class of symmetric functions} and then check whether the value of the functional can be lowered by perturbing the minimizer away from the symmetric situation. If one can lower the energy in this fashion then symmetry is broken. This procedure is successful only if one knows a lot about the minimizer in the symmetric class and can sometimes be a formidable problem, but it is a \emph{local} problem. It can also happen that there is degeneracy, that is, the energy of the symmetric as well as non-symmetric minimizers are the same, \emph{i.e.}, there is a region in parameter
space where there is coexistence.

A real difficulty occurs when the minimizer in the symmetric class is stable, \emph{i.e.}, all local perturbations that break the symmetry increase the energy. It is obvious that, in general, one cannot conclude that the minimizer is symmetric because the minimizer in the symmetric class and the actual minimizer might not be close in any reasonable notion of distance. In general it is very difficult to decide, assuming stability, wether the minimizer is symmetric or not. This is a \emph{global} problem and not amenable to linear methods.

It is evident that there are no general technique available for understanding symmetry of minimizers. The focus has to be and has been on relevant and non-trivial examples, such as finding the sharp constant in Sobolev's inequality~\cite{Aubin-76,Talenti-76}, the Hardy-Littlewood-Sobolev inequality~\cite{Lieb-83} or the logarithmic Sobolev inequality~\cite{Gross75} to mention classical examples. In the former two instances, rearrangement inequalities are the main tool for establishing the symmetry of the optimizers. There is a fairly large list of such examples that make up the canon of analysis and the goal of this paper is to add another one to it namely the problem of determining the sharp constant in the \emph{Caffarelli-Kohn-Nirenberg inequalities}.

\bigskip The Caffarelli-Kohn-Nirenberg inequalities
\be{CKN}
\(\ird{\frac{|v|^p}{|x|^{b\,p}}}\)^{2/p}\le\,\mathsf C_{a,b}\ird{\frac{|\nabla v|^2}{|x|^{2\,a}}}\quad\forall\,v\in\mathcal D_{a,b}
\ee
have been established in~\cite{Caffarelli-Kohn-Nirenberg-84}, under the conditions that $a\le b\le a+1$ if $d\ge3$, $a< b\le a+1$ if $d=2$, $a+1/2< b\le a+1$ if $d=1$, and $a<a_c$ where
\[
a_c:=\frac{d-2}2\,.
\]
The exponent
\be{exponent-relationabp}
p=\frac{2\,d}{d-2+2\,(b-a)}
\ee
is determined by the invariance of the inequality under scalings. Here $\mathsf C_{a,b}$ denotes the optimal constant in~\eqref{CKN} and the space $\mathcal D_{a,b}$ is defined by
\[
\mathcal D_{a,b}:=\Big\{\,v\in\mathrm L^p\(\R^d,|x|^{-b}\,dx\)\,:\,|x|^{-a}\,|\nabla v|\in\mathrm L^2\(\R^d,dx\)\Big\}\,.
\]
The space $\mathcal D_{a,b}$ can be obtained as the completion of $C_c^\infty(\R^d)$, the space of smooth functions in $\R^d$ with compact support, with respect to the norm defined by $\|v\|^2=\|\,|x|^{-b}\,v\,\|_p^2+\|\,|x|^{-a}\,\nabla v\,\|_2^2$. Inequality~\eqref{CKN} holds also for $a>a_c$, but in this case $\mathcal D_{a,b}$ has to be defined as the completion with respect to $\|\cdot\|$ of the space $C_c^\infty(\R^d\setminus\{0\}):=\big\{w\in C_c^\infty(\R^d)\,:\,\mbox{supp}(w)\subset\R^d\setminus\{0\}\big\}$. The two cases, $a>a_c$ and $a<a_c$, are related by the property of modified inversion symmetry that can be found in~\cite[Theorem~1.4, (ii)]{Catrina-Wang-01}. In this paper, we shall assume that $a<a_c$ without further notice. Inequality~\eqref{CKN} is sometimes called the \emph{Hardy-Sobolev inequality:} for $d\ge3$ it interpolates between the usual Sobolev inequality ($a=0$, $b=0$) and the weighted Hardy inequalities corresponding to $b=a+1$. More details can be found in~\cite{Catrina-Wang-01}. In this paper, F.~Catrina and Z.-Q.~Wang have also shown existence results of optimal functions if $b>a$. For $b=a<0$, $d\ge3$, equality in~\eqref{CKN} is never achieved in $\mathcal D_{a,b}$. For $b=a+1$ and $d\ge2$, the best constant in~\eqref{CKN} is given by $\mathsf C_{a,a+1}=(a_c-a)^2$ and it is never achieved. For $a<b<a+1$ and $d\ge2$, the best constant in~\eqref{CKN} is always achieved at some {\sl extremal\/} function $v_{a,b}\in\mathcal D_{a,b}$. When $d=1$, optimal functions are even and explicit, as we shall see next.

If we consider inequality~\eqref{CKN} on the smaller set of functions in $\mathcal D_{a,b}$ which are radially symmetric, then the optimal constant is improved to a constant $\mathsf C_{a,b}^\star\ge\mathsf C_{a,b}$ and equality is achieved by
\[
v_\star(x)=\(1+|x|^{(p-2)\,(a_c-a)}\)^{-\frac2{p-2}}\quad\forall\,x\in\R^d\,.
\]
In other words, we have $\mathsf C_{a,b}^\star=\|\,|x|^{-b}\,v_\star\,\|_p^2\,/\,\|\,|x|^{-a}\,\nabla v_\star\,\|_2^2$. Moreover, all optimal radial functions are equal to $v_\star$ up to a scaling or a multiplication by a constant (and translations if $a=b=0$). If $d=1$, $\mathsf C_{a,b}=\mathsf C_{a,b}^\star$ and $v_\star$ is always an optimal function. The main \emph{symmetry issue} is to know for which value of the parameters $a$ and~$b$ the function $v_\star$ is also optimal for inequality~\eqref{CKN} when $d\ge2$ or, equivalently, for which values of $a$ and $b$ we have $\mathsf C_{a,b}=\mathsf C_{a,b}^\star$. We shall say that \emph{symmetry} holds if $\mathsf C_{a,b}=\mathsf C_{a,b}^\star$ and that we have \emph{symmetry breaking} otherwise.

\smallskip\emph{Symmetry} results have been obtained in various regions of the $(a,b)$ plane. Moving planes and symmetrization methods have been applied successfully in~\cite{Chou-Chu-93,MR1731336} and~\cite[Lemma~2.1]{DELT09} to cover the range $0\le a<a_c$ when $d\ge3$. The case $a<0$ is by far more difficult. For any $d\ge2$, it has also been proved in~\cite{DELT09} that there is a curve $p\mapsto(a,b)$ taking values in the region $\{(a,b)\in\R^2\,:\,a<0\;\mbox{and}\;a<b<a+1\}$, which originates at $(a,b)=(0,0)$ when $p\to2^*$, such that $\lim_{p\to2}(a,b-a)=(-\infty,1)$, and which separates the region of symmetry from the region of symmetry breaking. When $d=2$, a non-explicit region of symmetry attached to $a=a_c=0$ has been obtained by a perturbation method in~\cite{MR2437030}. Perturbation results have also been obtained for $d\ge3$: see see~\cite{Lin-Wang-04,MR2053993} and~\cite[Theorem 4.8]{MR2001882}. Symmetry has been proved in \cite[Theorem~3.1]{MR1734159} when $a<0$ and $b>0$. In the case $a< 0$, the best known result so far in the region corresponding to $a<0$ and $b<0$ can be found in~\cite{DEL2011} where direct estimates show that symmetry holds under the condition
\[
b\ge\frac{d\,(d-1)+4\,d\,(a-a_c)^2}{6\,(d-1)+8\,(a-a_c)^2}+a-a_c=:b_{\,\rm direct}(a)\,.
\]

\smallskip To establish \emph{symmetry breaking} by perturbation is standard. One expands the functional
\[
\mathcal F[v]:=\mathsf C_{a,b}^\star\ird{\frac{|\nabla v|^2}{|x|^{2\,a}}}-\(\ird{\frac{|v|^p}{|x|^{b\,p}}}\)^{2/p}
\]
near the critical point $v_\star$ to second order by computing
\[
\mathsf Q[w]:=\lim_{\varepsilon\to0}\frac1{\varepsilon^2}\big(\mathcal F[v_\star+\varepsilon\,w]-\mathcal F[v_\star]\big)\,.
\]
The spectrum of the operator associated with the quadratic form $\mathsf Q$ determines the local stability or instability of the critical point $v_\star$. If $a<0$ and
\be{FS}
b<\frac{d\,(a_c-a)}{2\sqrt{(a_c-a)^2+d-1}}+a-a_c=:b_{\,\rm FS}(a)\,,
\ee
it turns that the lowest eigenvalue is negative and the radial optimal function is unstable, \emph{i.e.}, \emph{symmetry is broken}. The difference $b_{\,\rm direct}-b_{\,\rm FS}$ is of course nonnegative for any $a<0$ but corresponds to a remarkably small region: see Fig.~\ref{Figure}. If $b>b_{\,\rm FS}$ then the radial optimal function is locally stable. If $b=b_{\,\rm FS}$, the lowest spectral point of the operator associated with~$\mathsf Q$ is a zero eigenvalue, which incidentally determines $b_{\,\rm FS}$. The fact that symmetry can be broken was discovered by F.~Catrina and Z.-Q.~Wang in~\cite{Catrina-Wang-01}. The sharp condition given in~\eqref{FS} is due to V.~Felli and M.~Schneider in~\cite{Felli-Schneider-03} for $d\ge3$. Actually, the case $d=2$ is also covered by~\eqref{FS}. For brevity, we shall call it the \emph{Felli-Schneider region} and call the curve $b=b_{\,\rm FS}(a)$ the \emph{Felli-Schneider curve}. The issue of symmetry in the Caffarelli-Kohn-Nirenberg inequalities~\eqref{CKN} was studied numerically in~\cite{Freefem}. In~\cite{DE2012} formal expansions were used to establish the behavior of non-radial critical points near the bifurcation point supporting \emph{the conjecture that the Felli-Schneider curve is the threshold between the symmetry and the symmetry breaking region}. This is precisely what we prove in this paper.
%------------------------------------------------------------------------------
\begin{thm}\label{Thm:Main1} Let $d\ge2$ and $p\in (2, 2^*)$. If either $a\in[0,a_c)$ and $b>0$, or $a<0$ and $b\ge b_{\,\rm FS}(a)$, then the optimal functions for the Caffarelli-Kohn-Nirenberg inequalities~\eqref{CKN} are radially symmetric. \end{thm}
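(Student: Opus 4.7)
The plan is to use the Emden-Fowler change of variables to recast inequality~\eqref{CKN} as a weightless inequality on the cylinder $\mathcal C:=\R\times\S^{d-1}$. Writing $v(x)=|x|^{-(a_c-a)}\,w(s,\omega)$ with $s=-\log|x|$ and $\omega=x/|x|$, a direct computation turns~\eqref{CKN} into an inequality of the form
\begin{equation*}
\|w\|_{\mathrm L^p(\mathcal C)}^2\,\le\,\mathsf C_{a,b}\,\Big(\|\partial_s w\|_{\mathrm L^2(\mathcal C)}^2+\|\nabla_\omega w\|_{\mathrm L^2(\mathcal C)}^2+\Lambda\,\|w\|_{\mathrm L^2(\mathcal C)}^2\Big),\qquad \Lambda=(a_c-a)^2,
\end{equation*}
in which radial extremals of~\eqref{CKN} correspond exactly to functions depending only on $s$. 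Symmetry is therefore equivalent to the rigidity statement that positive solutions of the associated Euler-Lagrange equation on $\mathcal C$ are invariant under rotations of $\S^{d-1}$. Existence of extremals for~\eqref{CKN} in the considered range guarantees that this Euler-Lagrange equation has positive solutions to which the rigidity has to be applied.

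To establish this rigidity I would implement a nonlinear carré du champ method on $\mathcal C$. Choosing suitable exponents $m$ and $\beta$ depending on $p$ (with $m$ in the fast diffusion range), I would write $w=u^\beta$ and consider a flow of the form $u_t=u^{2-2\beta}\bigl(\LapM u+\kappa\,|\nabla u|^2/u\bigr)$, where $\LapM$ is the Laplace-Beltrami operator on $\mathcal C$, and I would associate to it a generalized entropy $\mathcal E[u]$ and Fisher information $\mathcal I[u]$ such that along the flow $\frac d{dt}\mathcal E=-\mathcal I$, with $\mathcal I$ reproducing, after normalization, the right-hand side of the cylindrical inequality. The program is then to prove a Bakry-Emery type estimate $\frac d{dt}\mathcal I\le-\lambda\,\mathcal I$, which after integration in time yields the sharp inequality $\mathcal I\ge\lambda\,\mathcal E$ and, via a standard argument, identifies the extremals.

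The heart of the proof is the pointwise carré du champ identity on $\mathcal C$. Differentiating $\mathcal I$ once more and integrating by parts, the integrand decomposes into: a positive quadratic form in the longitudinal, angular, and mixed second derivatives of $u$; a curvature contribution coming from the Bochner formula on $\S^{d-1}$, which brings in the factor $d-2$ via the Ricci tensor; and a remainder proportional to $\int_{\mathcal C}|\nabla_\omega u|^2\,u^{\gamma}\,d\nu$. A careful algebraic tuning of the free parameters $(m,\beta,\kappa)$ as functions of $(a,b,d)$ shows that the coefficient of this residual angular term is nonnegative precisely on the set $\{a<0,\,b\ge b_{\,\rm FS}(a)\}\cup\{a\ge 0,\,b>0\}$, with equality on the Felli-Schneider curve. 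In that range one obtains $\frac d{dt}\mathcal I\le-\lambda\,\mathcal I$, and equality forces $\nabla_\omega u\equiv 0$, yielding the radial symmetry of extremals.

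The main obstacle is not the local computation but the global analytic justification of the carré du champ manipulations on the non-compact manifold $\mathcal C$: the Fisher information involves quantities of the form $|\nabla u|^2/u^\alpha$, and the integrations by parts needed to produce sums of squares require nontrivial control at infinity in $s$ and at points where $u$ is small. The strategy would be to regularize the flow, either by truncating the initial datum around the explicit profile $v_\star$ or by adding a small perturbation making the coefficients uniformly elliptic, to establish the differential inequality on the regularized problem with uniform constants, and then to pass to the limit using the compactness of extremals of~\eqref{CKN} granted by Catrina-Wang. Handling the limits in such a way that the strict sign of the angular coefficient is preserved is the delicate point on which the whole argument ultimately rests.
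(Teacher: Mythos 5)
Your proposal captures the right overall strategy --- a nonlinear \emph{carr\'e du champ} computation whose integrand splits into a sum of squares plus an angular term whose coefficient changes sign exactly on the Felli--Schneider curve --- and you correctly identify that the real difficulty is the justification of the integrations by parts rather than the pointwise algebra. But as written there are two genuine gaps. First, you propose to actually run the parabolic flow and integrate the differential inequality $\frac d{dt}\mathcal I\le-\lambda\,\mathcal I$ in time. This requires a well-posedness and regularity theory for a degenerate quasilinear flow on a non-compact manifold, together with control of the boundary terms \emph{uniformly along the evolution}, none of which is supplied. The paper deliberately avoids this: the flow is used only heuristically, to identify the direction $\mathcal L\,u^m$ in which to test the (elliptic) Euler--Lagrange equation. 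Concretely, one passes to the variables $r\mapsto r^\alpha$ so that \eqref{CKN} becomes a critical Sobolev-type inequality in an artificial dimension $n=\frac{2p}{p-2}$, writes the extremal in terms of the pressure $\mathsf p=(n-1)\,u^{-1/n}$ solving the first-order-in-time-free identity \eqref{eqnp}, and shows (Lemma~\ref{Lem:ll1}) that integrating this elliptic identity against $\mathcal L\,u^m$ yields $\mathcal K[\mathsf p]=0$; Corollary~\ref{Cor:DerivFisherSign} then forces $\mathsf p=a+b\,r^2$, i.e.\ symmetry. Everything is stationary; no Cauchy problem is ever solved. Your "algebraic tuning of $(m,\beta,\kappa)$" is, modulo this change of variables, the single choice $m=1-\frac1n$, but without the dimensional reduction it is not clear your decomposition closes on the cylinder with the extra $\Lambda\,\|w\|_2^2$ term.

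Second, and more seriously, your plan for the boundary terms (truncate near $v_\star$ or regularize to uniform ellipticity, then pass to the limit by compactness of extremals) would not work. A truncation destroys the Euler--Lagrange equation and produces error terms in the integration by parts that have no sign, precisely where the inequality is saturated; and "preserving the strict sign of the angular coefficient in the limit" is not available on the Felli--Schneider curve itself, where that coefficient vanishes and the conclusion must still hold. The paper's resolution is entirely different: one proves sharp decay estimates for solutions of the elliptic equation \eqref{Eqn:Cylinder} (Proposition~\ref{Prop:decayinRd}), using a Kelvin transform plus ODE comparison arguments for the spherical-harmonics projections of $\varphi$. A key and non-obvious point is that the required decay rate $O(e^{2\alpha s})$ for quantities like $\varphi'/\varphi-\sqrt\Lambda$ is obtained from the spectral gap on $\M$ via the inequality $\sqrt{\Lambda+\lambda_1^{\M}}-\sqrt\Lambda\ge\alpha$, which is \emph{equivalent} to $\alpha\le\alpha_{\rm FS}$: the boundary terms can only be discarded in the symmetry region. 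Your proposal does not contain this mechanism, and without it the argument does not close. (A smaller point: for $d\ge3$ the curvature contribution is not just "$d-2$ from the Ricci tensor"; one needs the interpolation between $\kappa$ and $\lambda_1^{\M}$ carried out with the trace-free Hessian in Lemma~\ref{Lem:PoincareGen}, although on $\S^{d-1}$ the two coincide by Lichnerowicz--Obata.)
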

%------------------------------------------------------------------------------
\begin{figure}[h]
\includegraphics{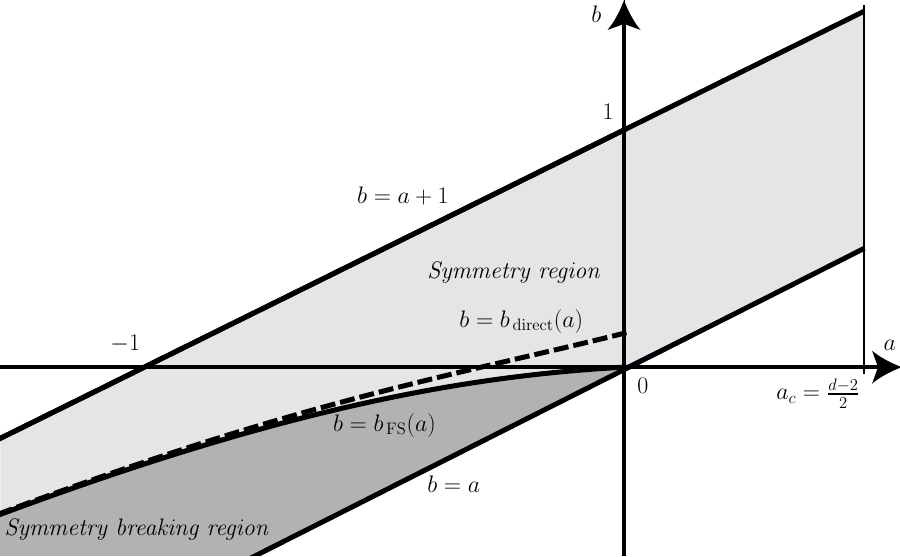}
\caption{\label{Figure}\small\emph{The \emph{Felli-Schneider region}, or symmetry breaking region, appears in dark grey and is defined by $a<0$, $a< b<b_{\,\rm FS}(a)$. We prove that symmetry holds in the light grey region defined by $b_{\,\rm FS}(a)\le b<a+1$ when $a<0$ and for any $b\in[a,a+1]$ if $a\in[0,a_c)$. The curve $a\mapsto b_{\,\rm direct}(a)$ corresponds to the dashed curve. The above plot is done with \hbox{$d=3$}.}}
\end{figure}

Note that for $(a,b)=(0,0)$ the Caffarelli-Kohn-Nirenberg inequalities are reduced to Sobolev's inequality. In this case the functional $\mathcal F$ has the larger, non-compact symmetry group $O(d+1,1)$. The celebrated Aubin-Talenti functions are optimal. The optimizers are unique up to multiplications by a constant, translations and scalings. For $(a,b)\not=(0,0)$ the functional $\mathcal F$ has the smaller symmetry group $O(d)$, \emph{i.e.}, it is invariant under rotations and reflections about the origin, and scalings. This symmetry persists for the optimizers in the parameter region $b \ge b_{\,\rm FS}(a)$, but is broken in the remaining \emph{Felli-Schneider region}. The optimizers have an $O(d-1)$ symmetry: see~\cite{MR2001882}. In this sense we have obtained the full phase diagram for the optimal functions of the Caffarelli-Kohn-Nirenberg inequalities. The reader may consult~\cite{Oslo} for a review of known results and~\cite{1406} for some recent progress.

Our method yields a stronger result than Theorem~\ref{Thm:Main1}, which can be interpreted as a \emph{rigidity result}. Consider the equation
\be{EL}
-\,\nabla\cdot\big(|x|^{-2\,a}\,\nabla v\big)=|x|^{-b\,p}\,|v|^{p-2}\,v\,.
\ee
%------------------------------------------------------------------------------
\begin{thm}\label{Thm:Rigidity} Assume that $d\ge2$ and $p\in(2,2^*)$. If either $a\in[0,a_c)$ and $b>0$, or $a<0$ and $b\ge b_{\,\rm FS}(a)$, then any nonnegative solution $v$ of~\eqref{EL} which satisfies $\ird{\frac{|v|^p}{|x|^{b\,p}}}<\infty$ is equal to $v_\star$ up to a scaling.\end{thm}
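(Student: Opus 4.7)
My plan is to use the Emden--Fowler change of variables to convert \eqref{EL} into an elliptic equation on the cylinder $\mathcal{C}:=\mathbb{R}\times\mathbb{S}^{d-1}$, and then prove rigidity by a nonlinear carr\'e du champ argument on this non-compact manifold. Setting $v(x)=|x|^{a-a_c}\,w(s,\omega)$ with $s=-\log|x|$ and $\omega=x/|x|$, equation \eqref{EL} becomes
\[
-\,\partial_s^2 w-\tfrac1{\alpha^2}\,\Delta_\omega w+\Lambda\,w=w^{p-1}
\]
on $\mathcal{C}$, for explicit constants $\alpha=\alpha(a,b,d)$ and $\Lambda=\Lambda(a,b,d)$; the weighted integrability assumption on $v$ becomes $w\in\mathrm{L}^p(\mathcal{C})$, and the radial profile $v_\star$ corresponds to the translation-invariant, $\omega$-independent solution $w_\star(s)=(\cosh(\lambda s))^{-2/(p-2)}$ for a suitable $\lambda$. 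In these variables, Theorem \ref{Thm:Rigidity} is equivalent to the assertion that every nonnegative $\mathrm{L}^p(\mathcal{C})$ solution is a translate of $w_\star$.

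To establish this, I would attach to the cylindrical equation a nonlinear parabolic flow of fast-diffusion / porous-medium type, with a free exponent $m$ to be tuned, and associate to it a generalized entropy $\mathcal{E}[u]$ and a generalized Fisher information $\mathcal{I}[u]$, arranged so that the cylindrical version of \eqref{CKN} reads $\mathcal{I}[u]\ge\Lambda\,\mathcal{E}[u]$. The deficit $\mathcal{J}[u]:=\mathcal{I}[u]-\Lambda\,\mathcal{E}[u]$ is then a Lyapunov functional for the flow, nonnegative and monotone nonincreasing in time. Any nonnegative $\mathrm{L}^p$ solution $v$ of \eqref{EL} provides an initial datum $u(0)$ which is a critical point of $\mathcal{J}$, so that $\frac{d}{dt}\mathcal{J}[u(t)]\big|_{t=0}=0$; this forces every nonnegative term appearing in the expression for the time derivative to vanish pointwise.

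The key step is an integrated Bochner / carr\'e du champ identity rewriting $-\frac{d}{dt}\mathcal{J}[u(t)]$ as a sum of squares (a traceless-Hessian-type expression in $w$) plus a weighted angular gradient term of the form $\kappa(a,b,d)\int_\mathcal{C}|\nabla_\omega w|^2\,\mu(w)\,d\nu$. A judicious choice of the nonlinear exponent $m$, calibrated against the linearized analysis that produces the Felli--Schneider curve, makes $\kappa$ vanish exactly on $b=b_{\rm FS}(a)$ and strictly positive for $b>b_{\rm FS}(a)$; the case $a\in[0,a_c)$, $b>0$ follows by an analogous analysis in which the $s$-derivative absorbs the spherical contribution. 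Nonnegativity of all terms then forces $\nabla_\omega w\equiv 0$ (so $w$ depends on $s$ alone) and the vanishing of the Hessian-like square forces $w$ to solve the one-dimensional ODE obtained from the cylindrical equation by dropping $\Delta_\omega$, whose only nonnegative $\mathrm{L}^p(\mathbb{R})$ solutions are translates of $w_\star$.

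The main obstacle, and what distinguishes this from classical Bakry--\'Emery rigidity on positively curved manifolds, is that $\mathcal{C}$ has no Ricci curvature in the $s$ direction, so no purely curvature-based positivity criterion can yield nonnegativity of $\kappa$; the required positivity must come from an intricate balance between the spherical curvature and the nonlinearity of the flow, and the threshold $b_{\rm FS}(a)$ is precisely where that balance becomes sharp. A secondary obstacle is to justify running the flow starting from an arbitrary nonnegative $\mathrm{L}^p(\mathcal{C})$ solution of \eqref{EL} rather than from smooth compactly supported data: this requires a regularization / approximation procedure, decay estimates at the ends of the cylinder (equivalently, near the origin and at infinity in the original variables), and enough regularity of the solution for the pointwise carr\'e du champ identity and its integration by parts without boundary terms to be rigorously valid.
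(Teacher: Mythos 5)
Your proposal is correct and follows essentially the same route as the paper: Emden--Fowler reduction, a fast-diffusion flow whose Fisher-information production is evaluated (only at the initial time) on the critical point, an integrated Bochner/\emph{carr\'e du champ} identity expressing that production as a sum of squares plus an angular term whose coefficient is nonnegative exactly up to the Felli--Schneider threshold, and regularity/decay estimates to remove the boundary terms. The only cosmetic differences are that the paper first rescales $r\mapsto r^\alpha$ so as to work with the scale-invariant Fisher information in a fictitious dimension $n=2p/(p-2)$ rather than with the deficit $\mathcal I[u]-\Lambda\,\mathcal E[u]$ on the cylinder, and that no separate argument is needed for $a\in[0,a_c)$ and $b>0$, since in the transformed variables that region is also characterized by the same condition $\alpha\le\alpha_{\,\rm FS}$.
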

%------------------------------------------------------------------------------
This uniqueness result is not true anymore in the Felli-Schneider region of symmetry breaking: there we find at least two distinct nonnegative solutions, one radial and the other one non-radial. Our method of proof relies on a computation which is by many aspects similar to the one that can be found in~\cite{MR615628,BV-V} in the case of elliptic equations on compact manifolds, and without weights. Such results are called \emph{rigidity results} because they aim at proving that only trivial solutions may exist. Trivial solutions are replaced in our case by the radial solution $v_\star$.

We would also like to emphasize that our method does not rely on any kind of rearrangement technique. So far, it seems that symmetrization techniques simply do not work for the examples at hand.

\medskip The key idea of our method is to exhibit a \emph{nonlinear flow} under the action of which $\mathcal F$ is monotone non-increasing, and whose limit is $v_\star$. In practice, we do not need to take into account the whole flow, and it is enough to perturb a critical point of $\mathcal F$ in the infinitesimal direction indicated by the flow: we reach a contradiction if this critical point is not radially symmetric. Why the flow is the right tool to consider, at least at heuristic level, will be explained in Section~\ref{Sec:Flow}.

Before doing that, we will reduce the Caffarelli-Kohn-Nirenberg inequalities to \emph{Sobolev type inequalities} in which the dimension $n$ is not necessarily an integer. With respect to this ``dimension" the inequalities are critical. Alternatively, the inequalities can be seen as Sobolev type inequalities on $\R^d$, with a weight $|x|^{n-d}$.

Sharp constants in Sobolev type inequalities can be characterized as optimal decay rates of entropies under the action of a fast diffusion flow: see~\cite{MR1940370}. There is a by now standard method to prove this, which is a nonlinear version of the \emph{carr\'e du champ} method of D.~Bakry and M.~Emery, and whose strategy goes back to~\cite{MR772092,Bakry-Emery85}. The nonlinear version was studied in~\cite{MR1777035,MR1986060,MR2745814} in the case of the fast diffusion equation. The key idea is to prove that a nonlinear Fisher information and its derivative can be compared. The carr\'e du champ method takes curvature and weights very well into account. The identity, which encodes the Bochner-Lichnerowicz-Weitzenb\"ock formula in the context of semi-groups and Markov processes, is usually designated as the $\mathrm{CD}(\rho,N)$ condition and has been extensively used in the context of Riemannian geometry. See~\cite{MR3155209} for a detailed account. This book, and in particular~\cite[chapter~6]{MR3155209}, has also been a source of inspiration for the change of variables of Section~\ref{Sec:Flow} and for the idea to `change the dimension' from $d$ to~$n$.

The link between the \emph{carr\'e du champ} method and \emph{rigidity} results was established in~\cite{MR1412446} and later exploited for interpolation inequalities and evolution problems on manifolds in~\cite{MR2381156}. See~\cite{MR3229793,DEKL} for more recent and detailed results in this direction. However, in our setting, the relation of the Fisher information and its derivative along the flow is more a nonlinear effect than a curvature issue, as in~\cite{MR3200617,1501}. This will be made clear in Section~\ref{Sec:Flow}. As a last observation, let us mention that the optimal cases of interpolation on the sphere and on the line, which were obtained by the combination of a stereographic projection and the Emden-Fowler transformation in~\cite{DoEsLa}, are understood using nonlinear flow methods, but the interpolation on the \emph{cylinder}, which is closely connected with~\eqref{CKN} as we shall see next, was still open.

\medskip The Caffarelli-Kohn-Nirenberg inequalities~\eqref{CKN} on~$\R^d$ are equivalent to Gagliar\-do-Nirenberg interpolation inequalities on the cylinder $\mathcal C_1:=\R\times\S^{d-1}$. As was observed in~\cite{Catrina-Wang-01}, this follows from the \emph{Emden-Fowler transformation}
\be{EF}
v(r,\omega)=r^{a-a_c}\,\varphi(s,\omega)\quad\mbox{with}\quad r=|x|\,,\quad s=-\log r\quad\mbox{and}\quad\omega=\frac xr\,.
\ee
With this transformation, inequality~\eqref{CKN} can be rewritten as
\be{Ineq:Gen_interp_Cylinder}
\nrmcnd{\partial_s\varphi}2^2+\nrmcnd{\nabla_\omega\varphi}2^2+\Lambda\,\nrmcnd{\varphi}2^2\ge\mu(\Lambda)\,\nrmcnd{\varphi}p^2\quad\forall\,\varphi\in\mathrm H^1(\mathcal C_1)\,,
\ee
where $\Lambda:=(a_c-a)^2$ and, using~\eqref{exponent-relationabp}, the optimal constant $\mu(\Lambda)$ is
\[
\mu(\Lambda)=\frac1{\mathsf C_{a,b}}\quad\mbox{with}\quad a=a_c\pm\sqrt\Lambda\quad\mbox{and}\quad b=\frac d{p}\pm\sqrt\Lambda\,.
\]
Strictly speaking,~\eqref{exponent-relationabp} with $a<a_c$ is given by $a=a_c-\sqrt\Lambda$, but~\eqref{Ineq:Gen_interp_Cylinder} is independent of the sign of $a-a_c$, hence proving that~\eqref{exponent-relationabp} also holds with $a>a_c$: this is a proof of the modified inversion symmetry property. Notice that $\nabla_\omega$ denotes the gradient with respect to angular variables only, and we shall use the notation $\Delta_\omega$ for the Laplace-Beltrami operator on $\S^{d-1}$.

Radial symmetry of $v$ means that $\varphi$ depends only on~$s$ and we shall then say that $\varphi$ is \emph{symmetric}. Scaling invariance in $\R^d$ is equivalent to invariance under translations in the $s$-direction and any optimal function satisfies, with a proper normalization, the Euler-Lagrange equation
\be{eqlinder}
-\,\partial^2_s\,\varphi-\,\Delta_\omega\,\varphi+\Lambda\,\varphi=\varphi^{p-1}\quad\mbox{in}\quad\mathcal C_1\,.
\ee
The symmetric solution to~\eqref{eqlinder} is explicit and given by
\be{varphistar}
\varphi_\Lambda(s)=\beta\,\big(\cosh(\alpha\,s)\big)^{-\frac2{p-2}}\,,\quad\alpha=\tfrac{p-2}2\,\sqrt\Lambda\,,\quad\beta=\(\tfrac p2\,\Lambda\)^\frac1{p-2}\,.
\ee
Among symmetric solutions (solutions depending only on $s$), it is unique up to translations in the $s$-direction. The \emph{Felli-Schneider curve} is given in terms of the parameters $p=\frac{2\,n}{n-2}$ and $\Lambda$ by
\[
\Lambda=\Lambda_{\,\rm FS}:=4\,\frac{d-1}{p^2-4}\,.
\]
The results of Theorems~\ref{Thm:Main1} and~\ref{Thm:Rigidity} have an exact counterpart on the cylinder.
%------------------------------------------------------------------------------
\begin{cor}\label{Cor:CylinderConstant} Assume that $d\ge2$ and $p\in(2,2^*)$. Equality in~\eqref{Ineq:Gen_interp_Cylinder} is achieved by~$\varphi_\Lambda$ if and only if $\Lambda\le\Lambda_{\,\rm FS}$. Moreover, up to translations, the unique solution to~\eqref{eqlinder} is equal to $\varphi_\Lambda$ if $\Lambda\le\Lambda_{\,\rm FS}$.\end{cor}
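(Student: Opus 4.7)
The plan is to deduce the corollary directly from Theorems~\ref{Thm:Main1} and~\ref{Thm:Rigidity} by pushing them through the Emden-Fowler transformation~\eqref{EF}. First I would record the dictionary carefully: a routine change-of-variables computation in polar coordinates $x=r\,\omega$ with $s=-\log r$, applied to $v(r,\omega)=r^{a-a_c}\,\varphi(s,\omega)$, gives (after integrating the cross term $\varphi\,\partial_s\varphi$ to zero) the identity $\|\,|x|^{-a}\,\nabla v\,\|_2^2=\nrmcnd{\partial_s\varphi}2^2+\nrmcnd{\nabla_\omega\varphi}2^2+\Lambda\,\nrmcnd{\varphi}2^2$ with $\Lambda=(a_c-a)^2$, while the scaling condition~\eqref{exponent-relationabp} forces $\|\,|x|^{-b}\,v\,\|_p^2=\nrmcnd{\varphi}p^2$. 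This identifies $\mathcal D_{a,b}$ isometrically with $\mathrm H^1(\mathcal C_1)$, sends radial $v$ to $s$-dependent $\varphi$, maps $v_\star$ to a translate of $\varphi_\Lambda$, and transforms~\eqref{EL} into~\eqref{eqlinder}.

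The second step is purely algebraic: I would verify that the two ``Felli-Schneider'' thresholds match. Writing $a=a_c-\sqrt\Lambda$ and $b=d/p-\sqrt\Lambda$, one has $b-(a-a_c)=d/p$; comparing with $b_{\,\rm FS}(a)-(a-a_c)=d\sqrt\Lambda/\bigl(2\sqrt{\Lambda+d-1}\,\bigr)$, the condition $b=b_{\,\rm FS}(a)$ reduces to $p^2\,\Lambda=4\,(\Lambda+d-1)$, i.e., $\Lambda=\Lambda_{\,\rm FS}$. Tracking signs shows $b>b_{\,\rm FS}(a)$ corresponds to $\Lambda<\Lambda_{\,\rm FS}$ in the regime $a<0$, while the alternative hypothesis $a\in[0,a_c)$, $b>0$ in Theorem~\ref{Thm:Main1} translates to $\sqrt\Lambda\le a_c$ with $\sqrt\Lambda<d/p$, which lies inside $\Lambda\le\Lambda_{\,\rm FS}$. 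Hence the two assumptions of Theorem~\ref{Thm:Main1} together exhaust exactly the cylinder regime $\Lambda\le\Lambda_{\,\rm FS}$.

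With the dictionary in place, the ``if'' direction is immediate. For $\Lambda\le\Lambda_{\,\rm FS}$, Theorem~\ref{Thm:Main1} says every optimizer of~\eqref{CKN} is radial, so every optimizer of~\eqref{Ineq:Gen_interp_Cylinder} is $s$-dependent only; among $s$-dependent solutions of~\eqref{eqlinder} only the translates of $\varphi_\Lambda$ can exist, so $\varphi_\Lambda$ achieves equality in~\eqref{Ineq:Gen_interp_Cylinder}. Similarly Theorem~\ref{Thm:Rigidity}, read on $\mathcal C_1$, yields the uniqueness (modulo $s$-translation) of nonnegative solutions of~\eqref{eqlinder}. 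For the converse in the equality statement I would invoke the Catrina-Wang/Felli-Schneider second-variation computation~\cite{Catrina-Wang-01,Felli-Schneider-03}: for $\Lambda>\Lambda_{\,\rm FS}$, the Hessian of the cylinder functional at $\varphi_\Lambda$ is negative on a first-spherical-harmonic direction, so $\varphi_\Lambda$ fails to be a local minimizer and hence cannot saturate~\eqref{Ineq:Gen_interp_Cylinder}. I do not foresee a genuine obstacle in this program: the substantive content is entirely encoded in Theorems~\ref{Thm:Main1}--\ref{Thm:Rigidity}, and what remains is bookkeeping plus the small algebraic verification above.
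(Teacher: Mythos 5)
Your Emden--Fowler dictionary is correct and is precisely the ``details left to the reader'' in Section~\ref{Sec:Proofs}: the Dirichlet-form identity under~\eqref{EF}, the algebra showing $b=b_{\,\rm FS}(a)\Leftrightarrow\Lambda=\Lambda_{\,\rm FS}$, the check that the two hypotheses of Theorem~\ref{Thm:Main1} together exhaust exactly $\Lambda\le\Lambda_{\,\rm FS}$ (indeed $b>0$ with $a\ge0$ forces $\Lambda<d^2/p^2<\Lambda_{\,\rm FS}$ because $p<2^*$), and the use of the second variation for the ``only if'' direction (this is Proposition~\ref{Prop:FS}) all check out. The problem is the direction in which you run the equivalence. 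In the paper the logical flow is the reverse of yours: Corollary~\ref{Cor:CylinderConstant} is obtained as the special case $\M=\S^{d-1}$ of Theorem~\ref{Thm:LV} together with Corollary~\ref{Cor:CylinderInequality}, using that $\lambda_\star/(p-2)=\frac{4\,(d-1)}{p^2-4}=\Lambda_{\,\rm FS}$ on the sphere, and Theorems~\ref{Thm:Main1} and~\ref{Thm:Rigidity} are then \emph{deduced from} the corollary via the very equivalence you describe. Since the paper contains no proof of Theorems~\ref{Thm:Main1} and~\ref{Thm:Rigidity} that does not pass through the cylinder picture, taking those theorems as your starting point makes the argument circular once it is placed inside the paper's architecture.

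The repair is cheap and preserves all of your bookkeeping: for the ``if'' part and the uniqueness statement, invoke Theorem~\ref{Thm:LV} with $\M=\S^{d-1}$ (or, one level lower, Lemma~\ref{Lem:ll1} combined with Corollary~\ref{Cor:DerivFisherSign}) in place of Theorems~\ref{Thm:Main1}--\ref{Thm:Rigidity}. This yields that every positive $\mathrm H^1$ solution of~\eqref{eqlinder} is a translate of $\varphi_\Lambda$ when $\Lambda\le\Lambda_{\,\rm FS}$; since optimizers of~\eqref{Ineq:Gen_interp_Cylinder} exist (by the Catrina--Wang argument) and, after normalization, solve~\eqref{eqlinder}, it follows that $\varphi_\Lambda$ achieves equality. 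Your treatment of the converse via linear instability is then exactly the paper's.
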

%------------------------------------------------------------------------------
As a consequence, the value of the optimal constant is explicit for any $\Lambda\le\Lambda_{\,\rm FS}$ and given by $\mu(\Lambda)=|\S^{d-1}|^{1-\frac2p}\,\mu_\R(\Lambda)$ with
\[
\mu_\R(\Lambda)=\frac{\nrmr{\partial_s\varphi_\Lambda}2^2+\Lambda\,\nrmr{\varphi_\Lambda}2^2}{\nrmr{\varphi_\Lambda}p^2}=\tfrac p2\,\Lambda^\frac{p+2}{2\,p}\(\frac{2\,\sqrt\pi\;\Gamma\big(\frac p{p-2}\big)}{(p-2)\,\Gamma\big(\frac{3\,p-2}{2\,(p-2)}\big)}\)^\frac{p-2}p\,.
\]

\medskip Our method is not limited to the cylinder $\mathcal C_1=\R\times\S^{d-1}$ and provides rigidity results for a large class of non-compact manifolds. Let us consider the case of a general cylinder
\[
\mathcal C=\R\times\M
\]
where $(\M,g)$ is a smooth compact connected Riemannian manifold of dimension $d-1$, without boundary. Let us denote by $\Delta_g$ the Laplace-Beltrami operator on~$\M$. We denote by $d\kern1pt v_g$ the volume element. We shall also denote by~$\mathrm{Ric}_g$ the Ricci tensor and by~$\lambda_1^{\M}$ the lowest positive eigenvalue of $-\,\Delta_g$. We consider the case where the curvature of $\M$ is bounded from below and define
\[
\kappa:=\inf_{\M}\inf_{\xi\in\S^{d-2}}\mathrm{Ric}_g(\xi\,,\xi)\,,
\]
\[
\lambda_\theta:=\(1+\delta\,\theta\,\frac{d-1}{d-2}\)\kappa+\delta\,(1-\theta)\,\lambda_1^{\M}\quad\mbox{with}\quad\delta=\frac{n-d}{(d-1)\,(n-1)}\,,
\]
where the dependence of $\lambda_\theta$ on $\theta$ will be discussed in Remark~\ref{Rem:lambda}. With
\[
\lambda_\star:=\lambda_{\theta_\star}\quad\mbox{where}\quad\theta_\star:=\frac{(d-2)\,(n-1)\,\big(3\,n+1-d\,(3\,n+5)\big)}{(d+1)\,\big(d\,(n^2-n-4)-n^2+3\,n+2\big)}\quad\mbox{if}\quad d\ge2
\]
and $\theta_\star=0$ if $d=2$, let us state a \emph{rigidity} result for the equation
\be{Eqn:Cylinder}
-\,\partial^2_s\,\varphi-\,\Delta_g\,\varphi+\Lambda\,\varphi=\varphi^{p-1}\quad\mbox{in}\quad\mathcal C\,.
\ee
%---------------------------------------------------------------------
\begin{thm}\label{Thm:LV} Assume that $d\ge2$, $p\in(2,2^*)$ and $\Lambda>0$. If $\Lambda\le\frac{\lambda_\star}{p-2}$, then any positive solution $\varphi\in\mathrm H^1(\mathcal C)$ of~\eqref{Eqn:Cylinder} is equal to $\varphi_\Lambda$, up to a translation in the $s$-direction.
\end{thm}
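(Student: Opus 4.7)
The plan is to adapt the nonlinear \emph{carré du champ} method outlined in the introduction to the product manifold $\mathcal C=\R\times\M$. First I would introduce a nonlinear change of unknown of the form $\varphi = u^{\beta}$ with an exponent $\beta$ chosen so that $p = \frac{2n}{n-2}$ is the critical Sobolev exponent for the fictitious dimension~$n$; equation~\eqref{Eqn:Cylinder} then becomes the Euler--Lagrange equation for an appropriate fast-diffusion-type flow on $\mathcal C$ whose generalized Fisher information $\mathcal I[u]$ is the natural entropy-derivative. Differentiating $\mathcal I$ along this flow at a stationary point, and performing systematic integration by parts on $\mathcal C$, should produce an identity of the shape
\begin{equation*}
0=\int_{\mathcal C}\mathcal Q_\theta[u]\,d\kern1pt v_g\,ds,
\end{equation*}
where $\mathcal Q_\theta[u]$ is a pointwise expression assembled from the Bochner--Lichnerowicz--Weitzenböck formula applied on the fiber $(\M,g)$ and from pure $s$-derivative contributions coming from the noncompact factor.

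The second step is the algebraic identification of the threshold. Following the $\mathrm{CD}(\rho,N)$ philosophy, I would split $\mathcal Q_\theta[u]$ into a trace-free Hessian square on $\M$, a Ricci-curvature term (estimated from below by $\kappa$), a term in $|\nabla_\omega u|^2$ controllable by the Poincaré inequality on $\M$ (producing the factor $\lambda_1^{\M}$), and mixed $\partial_s$/$\nabla_\omega$ terms handled by Cauchy--Schwarz. A one-parameter family of such decompositions, indexed by $\theta\in[0,1]$, produces the convex combination
$\lambda_\theta=\bigl(1+\delta\,\theta\,\tfrac{d-1}{d-2}\bigr)\kappa+\delta\,(1-\theta)\,\lambda_1^{\M}$
as a coefficient in front of the lower-order quadratic form. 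Optimising $\theta$ so that the residual quadratic form in $(\partial_s u,\nabla_\omega u)$ remains nonnegative under the weakest possible assumption on $\Lambda$ should single out the explicit value $\theta_\star$ stated in the theorem, and yield the sharp condition $\Lambda\,(p-2)\le\lambda_{\theta_\star}=\lambda_\star$.

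Once $\mathcal Q_{\theta_\star}[u]\ge 0$ is established pointwise, the vanishing of the integral forces each square to vanish: the traceless Hessian on $\M$ is zero, the mixed term forces $\nabla_g u\equiv 0$, so $\varphi$ depends only on $s$. The remaining one-dimensional ODE $-\,\partial_s^2\varphi+\Lambda\,\varphi=\varphi^{p-1}$ admits, up to translations, the unique positive $\mathrm H^1$-solution $\varphi_\Lambda$ of~\eqref{varphistar}. The main obstacle I foresee is twofold. Algebraically, the weighted Bochner computation on the cylinder is long and the cancellations that single out $\theta_\star$ are delicate, in the spirit of~\cite{MR615628,BV-V} but with the extra $\R$-direction and the nonlinear weight coming from $\beta$. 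Analytically, the non-compactness of $\mathcal C$ in the $s$-direction means every integration by parts must be justified: I would establish exponential decay in $s$ (at rate $\sim\sqrt\Lambda$) for $\mathrm H^1$-solutions of~\eqref{Eqn:Cylinder}, via elliptic regularity and a Moser iteration, then pass to the limit through a cut-off approximation. This decay/approximation step is where I expect the bulk of the technical work.
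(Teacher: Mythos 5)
Your overall architecture is exactly the paper's: pass to a fast-diffusion/Fisher-information formulation in a fictitious dimension $n=\frac{2p}{p-2}$, test the equation satisfied by the pressure variable against $\L u^m$ to obtain $\int \mathcal K=0$, decompose the integrand via Bochner--Lichnerowicz--Weitzenb\"ock on the fiber with a one-parameter family of trace-free splittings optimized at $\theta_\star$, and justify the integrations by parts through decay estimates. (The paper does this after an inverse Emden--Fowler change of variables, on $(0,\infty)\times\M$ with measure $r^{n-1}dr\,d v_g$, but that is only a coordinate choice.) However, two steps of your sketch would fail as written.

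First, the equality-case analysis. At the threshold $\Lambda=\lambda_\star/(p-2)$ the coefficient of the fiber-gradient term in the decomposition, namely $\lambda_\star-(n-2)\,\alpha^2=\lambda_\star-(p-2)\,\Lambda$, is exactly zero, so the vanishing of the integral leaves you with only two squares: schematically $\big|\mathsf p''-\tfrac{\mathsf p'}r-\tfrac{\Delta_g\mathsf p}{\alpha^2(n-1)r^2}\big|^2$ and $\big|\nabla_g\mathsf p'-\tfrac{\nabla_g\mathsf p}r\big|^2$. The second one does \emph{not} force $\nabla_g\mathsf p\equiv0$; it only gives $\nabla_g\mathsf p(r,\cdot)=r\,V(\cdot)$. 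To kill the angular modes one must solve the two resulting ODEs simultaneously for each eigenmode of $-\Delta_g$: compatibility would require $\lambda_k^{\M}=(n-1)\,\alpha^2$ for some $k\ge1$, and this is excluded either by the strict Lichnerowicz--Obata inequality $\frac{d-1}{d-2}\,\kappa<\lambda_1^{\M}$ when $\M\neq\S^{d-1}$ (which yields $(n-1)\,\alpha^2<\lambda_1^{\M}$), or, for $\M=\S^{d-1}$ and for $d=2$, by an extra nonnegative $|\nabla_g\mathsf p|^4$ term that survives the optimization. This mode-by-mode argument is the crux of the rigidity statement and is missing from your outline.

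Second, the boundary terms. The quantities to be controlled are not $\varphi$ itself but relative quantities such as $\varphi'/\varphi-\sqrt\Lambda$, $\nabla_g\varphi/\varphi$ and their derivatives, because the leading exponential behaviour cancels in $\mathsf p^{1-n}\,|\D\mathsf p|^2$ and its derivatives. Two-sided decay of $\varphi$ at rate $e^{-\sqrt\Lambda|s|}$ plus a cut-off only bounds the boundary terms by $O(r^{n-4})$, which does not vanish when $n<4$ (i.e. $p>4$, $d=2,3$). One needs the additional decay $O(e^{2\alpha s})$ of the relative quantities, obtained by projecting onto eigenmodes and using $\sqrt{\Lambda+\lambda_1^{\M}}-\sqrt\Lambda\ge\alpha$ --- an inequality which is itself equivalent to $\alpha\le\alpha_{\rm FS}$, so the symmetry hypothesis re-enters at this purely technical stage.
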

%---------------------------------------------------------------------
The condition in Theorem~\ref{Thm:LV} is reminiscent of the result of J.R.~Licois and L.~V\'eron in~\cite{MR1338283,MR1631581} which uses an interpolation between $\lambda_1^{\M}$ and $\kappa$. In our case, we can take $\theta=\theta_\star<0$.

To conclude the introduction, we come back to the question of the Gagliardo-Nirenberg interpolation inequalities on a general cylinder $\mathcal C$
\be{ineqcylindergeneral}
\nrmcndgen{\partial_s\varphi}2^2+\nrmcndgen{\nabla_g\varphi}2^2+\Lambda\,\nrmcndgen{\varphi}2^2\ge\mu(\Lambda)\,\nrmcndgen{\varphi}p^2\quad\forall\,\varphi\in\mathrm H^1(\mathcal C)
\ee
where $\mu(\Lambda)$ denotes the optimal constant for any $\Lambda>0$. As an extension of the constant found by V.~Felli and M.~Schneider, we define
\[
\Lambda_{\,\rm FS}:=\frac{4\,\lambda_1^{\M}}{p^2-4}
\]
for a general manifold $\M$. Notice that $\lambda_1^{\M}=d-1=\kappa+1$ if $\M=\S^{d-1}$ and, as a consequence in this case, $\frac{\lambda_\star}{p-2}=\Lambda_{\,\rm FS}$. Let us define
\[
\mu_\star(\Lambda):=\big(\mathrm{vol}_g(\M)\big)^{1-\frac2p}\,\mu_\R(\Lambda)\,.
\]
%------------------------------------------------------------------------------
\begin{cor}\label{Cor:CylinderInequality} Assume that $d\ge2$ and $p\in(2,2^*)$. The optimal constant in~\eqref{ineqcylindergeneral} is given by $\mu(\Lambda)=\mu_\star(\Lambda)$ if and only if $\Lambda\in(0,\Lambda_\star]$ where $\Lambda_\star$ is such that
\[
\frac{\lambda_\star}{p-2}\le\Lambda_\star\le\Lambda_{\,\rm FS}\,,
\]
and if $\Lambda_\star<\Lambda_{\,\rm FS}$, then $\frac{\lambda_\star}{p-2}<\Lambda_\star<\Lambda_{\,\rm FS}$. \end{cor}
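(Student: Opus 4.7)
The plan is to bracket the symmetry threshold $\Lambda_\star$ from below by the rigidity result of Theorem~\ref{Thm:LV}, from above by a Felli-Schneider style linearization of $\varphi_\Lambda$, and then to argue that the symmetry set $\mathcal S:=\{\Lambda>0\,:\,\mu(\Lambda)=\mu_\star(\Lambda)\}$ has the form $(0,\Lambda_\star]$.

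Since $\varphi_\Lambda$ is admissible for~\eqref{ineqcylindergeneral}, the bound $\mu(\Lambda)\le\mu_\star(\Lambda)$ is automatic. Existence of an optimizer on $\mathcal C=\R\times\M$ follows from a direct method: the embedding $\mathrm H^1(\mathcal C)\hookrightarrow\mathrm L^p(\mathcal C)$ is subcritical because $p<2^*$, and the translation noncompactness in $s$ is recovered by shifting a minimizing sequence so that its $\mathrm L^p$-mass remains centered. Any optimizer solves~\eqref{Eqn:Cylinder} after rescaling by $\mu(\Lambda)^{1/(p-2)}$, and Theorem~\ref{Thm:LV} then forces it to coincide with $\varphi_\Lambda$ up to a translation whenever $\Lambda\in(0,\lambda_\star/(p-2)]$. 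This gives $\Lambda_\star\ge\lambda_\star/(p-2)$.

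For the upper bound I would carry out a Felli-Schneider linearization at $\varphi_\Lambda$. Testing the second variation with $w(s,\omega)=f(s)\,\psi(\omega)$, where $\psi$ is an eigenfunction of $-\Delta_g$ on $\M$ associated with $\lambda_1^{\M}$, separation of variables reduces the quadratic form to a one-dimensional Schr\"odinger operator on $\R$ with a P\"oschl-Teller potential proportional to $\cosh^{-2}(\alpha s)$. Its lowest eigenvalue is classical and follows the computation of \cite{Felli-Schneider-03}; it crosses zero precisely at $\Lambda=4\lambda_1^{\M}/(p^2-4)=\Lambda_{\,\rm FS}$. So $\varphi_\Lambda$ is linearly unstable for $\Lambda>\Lambda_{\,\rm FS}$, forcing $\mu(\Lambda)<\mu_\star(\Lambda)$ and thus $\Lambda_\star\le\Lambda_{\,\rm FS}$. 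To identify $\mathcal S$ with an interval, I would note that both $\mu$ and $\mu_\star$ are concave in $\Lambda$ as infima of affine functions, hence continuous, so $\mathcal S$ is closed; for the downward closedness I would continue any non-symmetric optimizer at $\Lambda_1\notin\mathcal S$ along a branch of non-symmetric solutions as $\Lambda$ decreases, keeping $\mu<\mu_\star$ and showing that $\mathcal S^c$ is upward closed. The strict inequality $\lambda_\star/(p-2)<\Lambda_\star$ when $\Lambda_\star<\Lambda_{\,\rm FS}$ is then an implicit function theorem argument: at $\Lambda=\lambda_\star/(p-2)$ the function $\varphi_\Lambda$ is the only solution of~\eqref{Eqn:Cylinder} (by Theorem~\ref{Thm:LV}) and is linearly stable against symmetry-breaking perturbations (since $\lambda_\star/(p-2)<\Lambda_{\,\rm FS}$, by the Felli-Schneider step), so the symmetric branch persists as the unique local minimizer for $\Lambda$ in a right neighborhood of $\lambda_\star/(p-2)$, forcing $\mu=\mu_\star$ there.

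The main obstacle is the downward closedness of $\mathcal S$. Unlike in the scale-invariant Euclidean setting of~\eqref{CKN}, no one-parameter rescaling of $\mathcal C$ relates the problems at different values of $\Lambda$ in a clean way, so the monotonicity of the symmetry region cannot be extracted by scaling alone and a genuine continuation-in-$\Lambda$ or branch-following argument is needed. The remaining ingredients---rigidity at small $\Lambda$, linear instability past $\Lambda_{\,\rm FS}$, and local uniqueness via the implicit function theorem near $\Lambda_\star$---are fairly standard once Theorem~\ref{Thm:LV} and the Felli-Schneider computation are in hand.
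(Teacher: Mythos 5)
Your two bounds are obtained exactly as in the paper: $\Lambda_\star\ge\lambda_\star/(p-2)$ from Theorem~\ref{Thm:LV} applied to an optimizer (which exists by a Catrina--Wang type argument and solves~\eqref{Eqn:Cylinder} after normalization), and $\Lambda_\star\le\Lambda_{\,\rm FS}$ from the linear instability of $\varphi_\Lambda$ (Proposition~\ref{Prop:FS}). The genuine gap is in your proof of the strict inequality. The implicit function theorem at $\Lambda=\lambda_\star/(p-2)$ only gives uniqueness of solutions \emph{in a neighborhood of $\varphi_\Lambda$ in function space}: it shows the symmetric branch persists and is a strict local minimizer for nearby $\Lambda$, but it cannot exclude that the \emph{global} minimizer for $\Lambda$ slightly above $\lambda_\star/(p-2)$ is a non-symmetric function far from $\varphi_\Lambda$. ``Unique local minimizer near the symmetric branch'' does not imply $\mu=\mu_\star$, so the phrase ``forcing $\mu=\mu_\star$ there'' is unjustified as written. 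The missing ingredient is compactness of the family of optimizers: if $\Lambda_\star=\lambda_\star/(p-2)<\Lambda_{\,\rm FS}$, take $\Lambda_n\downarrow\Lambda_\star$ and non-symmetric extremals $\varphi_n$; by uniform elliptic estimates (after centering in $s$) the $\varphi_n$ converge to an extremal at $\Lambda_\star$. Since the radial extremal is a strict local minimum for every $\Lambda<\Lambda_{\,\rm FS}$, the limit cannot be $\varphi_{\Lambda_\star}$, so at $\Lambda_\star$ there are two distinct nonnegative solutions of~\eqref{Eqn:Cylinder} --- contradicting the rigidity of Theorem~\ref{Thm:LV} at $\Lambda=\lambda_\star/(p-2)$. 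This compactness step is precisely what the paper uses, and it is what converts your local information into the global statement.

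On the interval structure $\mathcal S=(0,\Lambda_\star]$: you are right that this is the delicate point and that no scaling in $\Lambda$ is available on the cylinder; your continuity observation gives closedness of $\mathcal S$, but the downward closedness is only asserted via an unspecified branch-following argument. The paper itself does not give a self-contained proof of this either (it refers to the analogous discussion in \cite{MR3229793} and \cite{DELT09}), so this part of your sketch is at the same level of detail as the original; but be aware that concavity of $\mu$ and $\mu_\star$ alone does not yield that $\{\mu=\mu_\star\}$ is an interval, so a genuine argument (monotonicity of $\mu/\mu_\star$, or a continuation of non-symmetric optimizers) is still required there.
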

%------------------------------------------------------------------------------

Our paper is organized as follows. We first generalize the linear instability result of V.~Felli and M.~Schneider in Section~\ref{Sec:FS}. In Section~\ref{Sec:Sobolev}, we reformulate our problem as a critical Sobolev inequality in a space with a dimension $n>d$, which allows us to use tools based on the fast diffusion equation in Section~\ref{Sec:Flow}, at least at heuristic level. The results of Section~\ref{Sec:Intro} are proved in Section~\ref{Sec:Proofs}. They rely on a key technical result which is stated in Corollary~\ref{Cor:DerivFisherSign}. Our methods also yield new sharp spectral estimates on cylinders (which were announced in \cite{dolbeault:hal-01137403}). Moreover, a precise version of an improved Hardy's inequality is obtained. These results are proved in Section~\ref{Sec:Consequences}.

%%%%%%%%%%%%%%%%%%%%%%%%%%%%%%%%%%%%%%%%%%%%%%%%%%%%%%%%%%%%%%%%%%%%%%%%%%%%%%%
%%%%%%%%%%%%%%%%%%%%%%%%%%%%%%%%%%%%%%%%%%%%%%%%%%%%%%%%%%%%%%%%%%%%%%%%%%%%%%%
\section{Linear instability of symmetric critical points}\label{Sec:FS}

On~$\mathcal C_1=\R\times\S^{d-1}$, we consider the measure $d\nu=ds\,d\omega$, where $d\omega$ is the measure induced by Lebesgue's measure on $\S^{d-1}\subset\R^d$. On a general cylinder~$\mathcal C=\R\times\M$, we consider the measure $d\nu=ds\,d\kern1pt v_g$ and still denote by $\omega$ the generic variable on $\M$. Since Caffarelli-Kohn-Nirenberg inequalities~\eqref{CKN} are equivalent to Gagliardo-Nirenberg inequalities~\eqref{Ineq:Gen_interp_Cylinder} on~$\mathcal C_1$ with $\Lambda:=(a_c-a)^2$ and since solutions of~\eqref{EL} are transformed into solutions to~\eqref{eqlinder} by the Emden-Fowler transformation~\eqref{EF}, we will work directly in the general cylinder setting, that is, on $\mathcal C$. Minor modifications of an existence argument proved in \cite{Catrina-Wang-01} for extremals of  \eqref{Ineq:Gen_interp_Cylinder} yields the existence of optimal functions for~\eqref{ineqcylindergeneral} for any $p\in(2,2^*)$ with $2^*=\frac{2\,d}{d-2}$ if $d\ge3$ and $2^*=\infty$ if $d=1$ or $2$. Up to multiplication by a constant, all extremal functions are positive on $\mathcal C$.

Is $s\mapsto\varphi_\Lambda(s)$ as defined by~\eqref{varphistar} and seen as a function on $\mathcal C$ optimal for~\eqref{Ineq:Gen_interp_Cylinder} ? Equivalently, is $\varphi_\Lambda$ a minimizer of
\[
\varphi\mapsto\mathcal G[\varphi]:=\nrmcndgen{\partial_s\varphi}2^2+\nrmcndgen{\nabla_\omega\varphi}2^2+\Lambda\,\nrmcndgen{\varphi}2^2-\mu_\star(\Lambda)\,\nrmcndgen{\varphi}p^2\,?
\]
This can be tested by perturbing $\mathcal G$ around $\varphi_\Lambda$. Here we simply extend the strategy of \cite{Catrina-Wang-01,Felli-Schneider-03} to a general cylinder $\mathcal C$. Since the ground state of the Schr\"odinger operator $-\,\partial^2_s\,+\,\Lambda-\,(p-1)\,\mu_\star(\Lambda)\,\varphi_\Lambda^{p-2}$ is generated by $\varphi_\Lambda^{p/2}$, we may consider $\mathcal G\big[\varphi_\Lambda+\varepsilon\,\varphi_\Lambda^{p/2}\,\phi_1\big]$ as $\varepsilon\to0$, where $\phi_1$ is an eigenfunction associated with the first positive eigenvalue~$\lambda_1^{\M}$ of $-\,\Delta$ on $\M$. An elementary computation shows that
\[
\mathcal G\left[\varphi_\Lambda+\varepsilon\,\varphi_\Lambda^{p/2}\,\phi_1\right]=\varepsilon^2\,\nrmr{\varphi_\Lambda}p^p\(\lambda_1^{\M}-\,\tfrac14\,(p^2-4)\,\Lambda\)+o(\varepsilon^2)\,.
\]
%------------------------------------------------------------------------------
\begin{prop}\label{Prop:FS} With the above notations, $\varphi_\Lambda$ is \emph{not} a local minimizer of $\mathcal G$ if $\Lambda>4\,\lambda_1^{\M}/(p^2-4)$.\end{prop}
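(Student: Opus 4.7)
The strategy I would take is exactly the one sketched by the authors: since $\varphi_\Lambda$ is a critical point of $\mathcal G$ (it satisfies the Euler--Lagrange equation and $\mu_\star(\Lambda)$ is the corresponding Lagrange multiplier after accounting for the homogeneity of $\|\cdot\|_p$), the first-order term in an expansion vanishes, and the question of local minimality reduces to the sign of the Hessian on a judiciously chosen direction. It is therefore enough to exhibit one test direction along which the quadratic form is negative.

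The plan is to work with the perturbation $\varphi_\varepsilon := \varphi_\Lambda+\varepsilon\,\varphi_\Lambda^{p/2}\,\phi_1$, where $\phi_1\in C^\infty(\M)$ is an eigenfunction of $-\Delta_g$ on $\M$ associated with $\lambda_1^{\M}$, normalized by $\nrM{\phi_1}2=1$. The key point is that $\phi_1$ has zero mean on $\M$, so the linear (in $\varepsilon$) contributions in each of the pieces composing $\mathcal G$ that factor through $\int_{\M}\phi_1\,dv_g$ vanish identically; this includes the cross terms in $\nrmcndgen{\partial_s\varphi_\varepsilon}2^2$, in $\Lambda\,\nrmcndgen{\varphi_\varepsilon}2^2$, and in $\int_{\mathcal C}|\varphi_\varepsilon|^p\,d\nu$. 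As a consequence the expansion is really quadratic in $\varepsilon$, with no first-order term, as it should be at a critical point.

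For the remaining $\varepsilon^2$ coefficients I would use: (i) $|\nabla_g\varphi_\varepsilon|^2=\varepsilon^2\,\varphi_\Lambda^p\,|\nabla_g\phi_1|^2+O(\varepsilon^4)$ combined with $\iM{|\nabla_g\phi_1|^2}=\lambda_1^{\M}\,\iM{\phi_1^2}=\lambda_1^{\M}$; (ii) a Taylor expansion $(1+u)^p=1+p\,u+\tfrac{p(p-1)}2u^2+\cdots$ with $u=\varepsilon\,\varphi_\Lambda^{p/2-1}\,\phi_1$, together with the subsequent expansion of $(\cdot)^{2/p}$; and (iii) the Euler--Lagrange identity satisfied by $\varphi_\Lambda$, namely $-\varphi_\Lambda''+\Lambda\,\varphi_\Lambda=c\,\varphi_\Lambda^{p-1}$ for an explicit constant $c$ determined by $\mu_\star(\Lambda)$. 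Using this ODE to integrate by parts in the $s$-variable allows one to collect all $s$-integrals into a single multiple of $\nrmr{\varphi_\Lambda}p^p$. After cancellations the coefficient of $\varepsilon^2$ becomes
\[
\nrmr{\varphi_\Lambda}p^p\,\Bigl(\lambda_1^{\M}-\tfrac14(p^2-4)\,\Lambda\Bigr),
\]
exactly as announced. If $\Lambda>4\,\lambda_1^{\M}/(p^2-4)$, this quantity is strictly negative, so $\mathcal G[\varphi_\varepsilon]<\mathcal G[\varphi_\Lambda]$ for all sufficiently small $\varepsilon\neq0$, proving that $\varphi_\Lambda$ is not a local minimizer.

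The only mildly delicate step is the bookkeeping of the one-dimensional $s$-integrals produced by $\partial_s(\varphi_\Lambda^{p/2})$ and by the Taylor expansion of $\|\varphi_\varepsilon\|_p^2$: they must be reduced to a common integral via the ODE for $\varphi_\Lambda$ and the algebraic relation $\partial_s(\varphi_\Lambda^{p/2})=\tfrac p2\,\varphi_\Lambda^{p/2-1}\,\varphi_\Lambda'$. Since $\phi_1$ and $\varphi_\Lambda$ depend on disjoint variables, the integrals on $\mathcal C$ factor into a product of an $s$-integral and the $\M$-integral $\iM{\phi_1^2}=1$; this factorization is what makes the perturbation analysis reduce neatly to a one-dimensional eigenvalue computation, and it also explains \emph{a posteriori} why the choice $\varphi_\Lambda^{p/2}\,\phi_1$ is natural: it puts the generator of the instability in the kernel of the mean-zero projection on $\M$ while letting $\varphi_\Lambda^{p/2}$ play the role of the ``ground state'' of the Schr\"odinger operator $-\partial_s^2+\Lambda-(p-1)\,\mu_\star(\Lambda)\,\varphi_\Lambda^{p-2}$ on $\R$.
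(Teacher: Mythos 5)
Your proposal is correct and follows exactly the route the paper takes: the paper's entire proof consists of the test perturbation $\varphi_\Lambda+\varepsilon\,\varphi_\Lambda^{p/2}\,\phi_1$ and the "elementary computation" yielding the coefficient $\nrmr{\varphi_\Lambda}p^p\bigl(\lambda_1^{\M}-\tfrac14\,(p^2-4)\,\Lambda\bigr)$ of $\varepsilon^2$, which you reproduce, including the two key points (the mean-zero property of $\phi_1$ killing the first-order and the nonlocal second-order terms, and the fact that $\varphi_\Lambda^{p/2}$ is the ground state of $-\,\partial_s^2+\Lambda-(p-1)\,\varphi_\Lambda^{p-2}$ with eigenvalue $-\tfrac14\,(p^2-4)\,\Lambda$).
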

%------------------------------------------------------------------------------
One can actually check that Proposition~\ref{Prop:FS} states the sharp condition for linear instability of $\varphi_\Lambda$. Details are left to the reader. See \cite{dolbeault:hal-01137403} for a similar computation for spectral estimates, and \cite{DE2012} for an expansion of the non-symmetric branch around the bifurcation point. Hence the region of linear instability of symmetric critical points is given by $\alpha>\alpha_{\,\rm FS}$, with
\[
\alpha_{\,\rm FS}:=\sqrt{\frac{p-2}{p+2}\,\lambda_1^{\M}}\,.
\]
In terms of $\Lambda$, the above condition is equivalent to \hbox{$\Lambda>\Lambda_{\,\rm FS}:=\frac{4\,\lambda_1^{\M}}{p^2-4}$}.
When $\M=\S^{d-1}$, $\lambda_1^{\M}=d-1$ and we recover the expression of $\Lambda_{\,\rm FS}$ found in \cite{Felli-Schneider-03}.

%%%%%%%%%%%%%%%%%%%%%%%%%%%%%%%%%%%%%%%%%%%%%%%%%%%%%%%%%%%%%%%%%%%%%%%%%%%%%%%
%%%%%%%%%%%%%%%%%%%%%%%%%%%%%%%%%%%%%%%%%%%%%%%%%%%%%%%%%%%%%%%%%%%%%%%%%%%%%%%
\section{A change of variables and a Sobolev type inequality}\label{Sec:Sobolev}

The first step of our method is a change of variables which reduces the Caffarelli-Kohn-Nirenberg inequalities to a Sobolev type inequality in a non-integer dimension $n>d$. A similar transformation can also be used for Gagliardo-Nirenberg inequalities on cylinders, combined with an inverse Emden-Fowler transformation.

%%%%%%%%%%%%%%%%%%%%%%%%%%%%%%%%%%%%%%%%%%%%%%%%%%%%%%%%%%%%%%%%%%%%%%%%%%%%%%%
\subsection{The case of Caffarelli-Kohn-Nirenberg inequalities}\label{Sec:CKN-Sobolev}
We start by proving that~\eqref{CKN} is equivalent to a Sobolev type inequality with a weight. From now on, we also assume that $d\ge2$. Written in spherical coordinates, with
\[
r=|x|\quad\mbox{and}\quad\omega=\frac x{|x|}\,,
\]
the Caffarelli-Kohn-Nirenberg inequality~\eqref{CKN} becomes
\[
\(\irdsph{|v|^p}{d-b\,p}\)^\frac2p\le\mathsf C_{a,b}\irdsph{\left|\nabla v\right|^2}{d-2\,a}
\]
where $\left|\nabla v\right|^2=\left|\tfrac{\partial v}{\partial r}\right|^2+\tfrac1{r^2}\,\left|\nabla_\omega v\right|^2$ and $\nabla_\omega$ denotes the gradient with respect to the angular variable $\omega\in\S^{d-1}$. Next we consider the change of variables $r\mapsto r^\alpha$,
\be{wv}
v(r,\omega)=w(r^\alpha,\omega)\quad\forall\,(r,\omega)\in\R^+\times\S^{d-1}
\ee
so that
\begin{multline*}
\alpha^{1-\frac2p}\(\irdsph{|w|^p}{\frac{d-b\,p}\alpha}\)^\frac2p\\
\le\mathsf C_{a,b}\irdsph{\(\alpha^2\left|\tfrac{\partial w}{\partial r}\right|^2+\tfrac1{r^2}\,|\nabla_\omega w|^2\)}{\frac{d-2\,a-2}\alpha+2}\,,
\end{multline*}
and pick $\alpha$ so that
\[
n=\frac{d-b\,p}\alpha=\frac{d-2\,a-2}\alpha+2\,.
\]
Hence, we define new parameters
\[
\alpha=\frac{(1+a-b)\,(a_c-a)}{a_c-a+b}\quad\mbox{and}\quad n=\frac{2\,p}{p-2}=\frac d{1+a-b}\,.
\]
If we think of $n$ as a non-integer dimension, then $p=\frac{2\,n}{n-2}$ is the associated critical Sobolev exponent. Since $-\infty<a<a_c$, $0<b-a<1$ and $2<p<\frac{2\,d}{d-2}$, the parameters $\alpha$ and $n$ vary in the ranges $0<\alpha<\infty$ and $d<n<\infty$. The \emph{Felli-Schneider curve} in the $(\alpha, n)$ variables is given by
\[
\alpha=\sqrt{\frac{d-1}{n-1}}=:\alpha_{\,\rm FS}\,.
\]
Hence, the region of symmetry breaking is given by $\alpha>\alpha_{\,\rm FS}$. In the new variables, the derivatives are given by
\[
\D w=\(\alpha\,\frac{\partial w}{\partial r},\frac1r\,\nabla_\omega w\)\,.
\]
On $\R^d\approx(0,+\infty)\times\S^{d-1}$, we consider the measure
\[
d\mu:=r^{n-1}\,dr\,d\omega\,.
\]
The inequality becomes
\be{CKN2}
\alpha^{1-\frac2p}\(\irdmu{|w|^p}\)^\frac2p\le\mathsf C_{a,b}\irdmu{|\D w|^2}
\ee
and has the homogeneity of Sobolev's inequality for functions defined on $\R^n$ if~$n$ is an integer. The result of Theorem~\ref{Thm:Main1} can be rephrased as follows.
%------------------------------------------------------------------------------
\begin{prop}\label{Thm:Sobolev} Let $d\ge 2$ and assume that $\alpha\le \alpha_{\rm FS}$. Then optimality in~\eqref{CKN2} is achieved by radial functions.\end{prop}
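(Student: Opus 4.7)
The plan is to prove Proposition~\ref{Thm:Sobolev} by a nonlinear \emph{carr\'e du champ} argument tailored to the anisotropic gradient $\D$ and the weighted measure $d\mu=r^{n-1}\,dr\,d\omega$, using the effective dimension $n=2p/(p-2)>d$. First I would identify the image of $v_\star$ under~\eqref{wv} as the Aubin-Talenti type profile
$$
w_\star(r)=\(1+r^2\)^{-(n-2)/2},
$$
which solves $-\mathsf L\,w_\star=c\,w_\star^{p-1}$, where $\mathsf L$ is the self-adjoint operator associated to $(\D,d\mu)$, namely
$$
\mathsf L w=\alpha^2\(w_{rr}+\tfrac{n-1}r\,w_r\)+\tfrac1{r^2}\,\Delta_\omega w.
$$
Note the asymmetry: the angular block of $\mathsf L$ carries coefficient $1$ rather than $\alpha^2$, and this mismatch is precisely what gives rise to the Felli-Schneider threshold $\alpha_{\rm FS}=\sqrt{(d-1)/(n-1)}$.

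Next I would introduce the fast diffusion flow $u_t=\mathsf L\big(u^m\big)$ for an exponent $m$ matched to $p$, together with the Fisher-type information
$$
\mathcal I[u]:=\irdmu{u\,|\D P|^2},\qquad P:=\tfrac m{m-1}\,u^{m-1},
$$
which, after the substitution relating $w$ and an appropriate power of $u$, is proportional to the anisotropic Dirichlet energy $\irdmu{|\D w|^2}$ on the right-hand side of~\eqref{CKN2}. A computation modeled on the Bakry-Emery calculus (as in~\cite{MR1777035,MR1986060,MR2745814}) should then yield an identity of the shape
$$
\tfrac d{dt}\mathcal I[u(t)]=-\,2\irdmu{\mathcal K[P]\,u^m},
$$
with $\mathcal K[P]$ a pointwise quadratic expression in the Hessian and first derivatives of $P$. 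Rather than running the flow globally, I would pick a positive critical point $w$ of~\eqref{CKN2}, set $u_0=w^p$ so that $P$ is proportional to $w^{p-2}$, and exploit the fact that $\mathcal I$ is stationary at such a $w$: pointwise nonnegativity of $\mathcal K[P]$ would then force each square entering $\mathcal K$ to vanish, yielding $\nabla_\omega P\equiv0$ and hence the radial symmetry of $w$.

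The hard part will be the pointwise inequality $\mathcal K[P]\ge0$ in the regime $\alpha\le\alpha_{\rm FS}$. Expanding $\mathcal K[P]$ via a weighted Bochner-Lichnerowicz-Weitzenb\"ock identity on $(0,\infty)\times\S^{d-1}$ should split it into a radial Hessian square with prefactor $\alpha^2(n-1)$, an angular contribution bounded below by the Lichnerowicz bound on $\S^{d-1}$ (which furnishes a factor $d-1$), and cross terms mixing radial and angular derivatives. Completing squares---with an auxiliary parameter whose optimal choice is encoded in the shift from dimension $d$ to~$n$---should reduce the question to the nonnegativity of a quadratic form, which holds if and only if $\alpha^2(n-1)\le d-1$, i.e.\ exactly under the hypothesis $\alpha\le\alpha_{\rm FS}$. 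The algebra of this completion of squares, together with verifying that the nonlinear remainder terms carry the right sign, will be the main technical obstacle.
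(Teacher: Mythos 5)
Your overall strategy is exactly the one the paper follows: pass to the $(\alpha,n)$ variables, introduce the pressure $\mathsf p\propto u^{-1/n}$ and the Fisher information $\mathcal I[u]=\irdmugenn{u\,|\D\mathsf p|^2}$, use the fast diffusion flow only to identify the direction $\L u^m$ in which to test a critical point, and conclude radial symmetry from the vanishing of $\mathcal K[\mathsf p]$. But two steps are not as you describe them, and both carry real content.

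First, the key inequality $\mathcal K[\mathsf p]\ge0$ is \emph{not} pointwise, and cannot be made so. The pointwise decomposition (Lemma~\ref{Lem:Derivmatrixform1}) writes the integrand $\kk[\mathsf p]$ as two perfect squares plus $r^{-4}\,\kk_{\M}[\mathsf p]$, where $\kk_{\M}[\mathsf p]$ contains the negative term $-(n-2)\,\alpha^2\,|\nabla\mathsf p|^2$; its sign is recovered only \emph{after} integrating over $\M$ against $\mathsf p^{1-n}$, using the Poincar\'e inequality $\iM{(\Delta f)^2}\ge\lambda_1^{\M}\iM{|\nabla f|^2}$ together with several integrations by parts on $\M$ to absorb cross terms such as $\iM{\Delta f\,|\nabla f|^2/f}$. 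The spectral gap $\lambda_1^{\S^{d-1}}=d-1$ is what produces the sharp Felli--Schneider constant; this is glaring for $d=2$, where $\M$ is a circle with no curvature and the entire bound comes from the Poincar\'e inequality, so a pointwise Bochner/completion-of-squares argument yields nothing. Correspondingly, the rigidity conclusion ($\mathcal K[\mathsf p]=0\Rightarrow u=u_\star$) identifies equality cases of an \emph{integrated} inequality, not of a pointwise sum of squares.

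Second, the passage from ``$w$ is a critical point'' to ``$\mathcal K[\mathsf p]=0$'' is not a formality. One tests the elliptic equation $\mathsf p\,\L\mathsf p-\frac n2\,|\D\mathsf p|^2=\mathrm{const}$ against $\L u^m$ and integrates by parts on $(r,R)\times\M$; the boundary terms involve $\mathsf p''$, $\nabla\mathsf p'/r$, $|\D\mathsf p|^2\,\mathsf p'/\mathsf p$ weighted by $r^{n-1}$, and showing that they vanish as $r\to0_+$ and $R\to+\infty$ requires the sharp decay and regularity estimates of Appendix~\ref{Appendix1} (Kelvin transform plus comparison arguments), which themselves invoke the hypothesis $\alpha\le\alpha_{\rm FS}$ when $n<4$. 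In this non-compact weighted setting this is a substantial portion of the proof, and your proposal omits it entirely.
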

%------------------------------------------------------------------------------
The r.h.s.~in~\eqref{CKN2} generically differs from the usual Dirichlet integral because of the coefficient $\alpha$ in the derivative $\D$ and because the angular variable $\omega$ is in $\S^{d-1}$ with $d<n$.

\medskip\noindent\emph{Notations.} When there is no ambiguity, we will omit the index $_\omega$ and from now on write that $\nabla=\nabla_\omega$ denotes the gradient with respect to the angular variable $\omega\in\S^{d-1}$ and that $\Lap$ is the Laplace-Beltrami operator on $\S^{d-1}$. We define the self-adjoint operator $\L$ by
\[
\L w:=-\,\D^*\,\D\,w=\alpha^2\,w''+\alpha^2\,\frac{n-1}r\,w'+\frac{\Delta\,w}{r^2}\,.
\]
The fundamental property of $\L$ is the fact that
\[
\irdmu{w_1\,\L w_2}=-\irdmu{\D w_1\cdot\D w_2}\quad\forall\,w_1,\,w_2\in\mathcal D(\R^d)\,,
\]

%%%%%%%%%%%%%%%%%%%%%%%%%%%%%%%%%%%%%%%%%%%%%%%%%%%%%%%%%%%%%%%%%%%%%%%%%%%%%%%
\subsection{The case of Gagliardo-Nirenberg inequalities on general cylinders}\label{Sec:GN-Sobolev}

If we study solutions to~\eqref{Eqn:Cylinder} or inequality~\eqref{ineqcylindergeneral}, the strategy is to use the inverse Emden-Fowler transform to rewrite the problem on $(0,\infty)\times\M$ and then use the change of variables $r\mapsto r^\alpha$ as in Section~\ref{Sec:CKN-Sobolev} to write a Sobolev type inequality. Let us consider the change of variables
\be{wvarphi}
\varphi(s,\omega)=e^\frac{2\,s}{p-2}\,w\(e^{-\,\alpha\,s},\omega\)\quad\forall\,(s,\omega)\in\mathcal C=\R\times\M\,,
\ee
with
\[
\alpha=\frac{p-2}2\,\sqrt\Lambda\quad\mbox{and}\quad n=\frac{2\,p}{p-2}\,.
\]
Inequality~\eqref{ineqcylindergeneral} is then equivalent to
\be{CKN3}
\mu(\Lambda)\,\alpha^{1-\frac2p}\(\irdmugen{|w|^p}\)^\frac2p\le\irdmugen{|\D w|^2}
\ee
where $d\mu:=r^{n-1}\,dr\,d\kern1pt v_g$ is a measure on $(0,\infty)\times\M$ and $\D w=\(\alpha\,\frac{\partial w}{\partial r},\frac1r\,\nabla_g w\)$ where $\nabla_g$ denotes the gradient on $\M$. Inequality~\eqref{CKN3} coincides with~\eqref{CKN2} when $\M=\S^{d-1}$. If $\Delta_g$ denotes the Laplace-Beltrami operator on $\M$, then \eqref{Eqn:Cylinder} can be rewritten as
\[
-\,\D^*\,\D\,w+w^{p-1}=0\quad\mbox{in}\quad(0,\infty)\times\M\,.
\]
The region where we shall prove symmetry is given by
\begin{equation}\label{Range}
\alpha\le\frac12\,\sqrt{(p-2)\,\lambda_\star}
\end{equation}
with $\lambda_\star$ defined in the introduction. The symmetry region coincides with $\alpha\le\alpha_{\,\rm FS}$ when $\M=\S^{d-1}$. Conversely, symmetry is broken if $\alpha>\alpha_{\,\rm FS}$ with
\[
\alpha_{\,\rm FS}=\frac{p-2}2\,\sqrt{\Lambda_{\,\rm FS}}=\sqrt{\frac{p-2}{p+2}\,\lambda_1^{\M}}\,.
\]

\medskip\noindent\emph{Notations.} When there is no ambiguity, we shall write $\nabla_g=\nabla$ and $\Delta_g=\Delta$ in what follows.

%%%%%%%%%%%%%%%%%%%%%%%%%%%%%%%%%%%%%%%%%%%%%%%%%%%%%%%%%%%%%%%%%%%%%%%%%%%%%%%
%%%%%%%%%%%%%%%%%%%%%%%%%%%%%%%%%%%%%%%%%%%%%%%%%%%%%%%%%%%%%%%%%%%%%%%%%%%%%%%
\section{Heuristics: monotonicity along a well chosen nonlinear flow}\label{Sec:Flow}

In this section we collect some observations on the monotonicity of a generalized Fisher information along a fast diffusion flow. These observations explain our strategy. We consider the measure $d\mu=r^{n-1}\,dr\,d\kern1pt v_g$ on $(0,\infty)\times\M$.

Let us start with the Fisher information. We transform the Sobolev type inequality~\eqref{CKN3} of the previous section as follows. With
\be{uw}
u^{\frac 12-\frac1n}=|w|\quad\Longleftrightarrow\quad u=|w|^p\quad\mbox{with}\quad p=\frac{2\,n}{n-2}
\ee
the r.h.s.~in~\eqref{CKN2} is transformed into a generalized \emph{Fisher information}
\be{Fisher}
\mathcal I[u]:=\irdmugen{u\,|\D\mathsf p|^2}\quad\mbox{where}\quad\mathsf p=\frac m{1-m}\,u^{m-1}\quad\mbox{and}\quad m=1-\frac1n\,,
\ee
while the l.h.s.~in~\eqref{CKN2} is now proportional to a \emph{mass}, $\irdmugen u$. Here $\mathsf p$ is the \emph{pressure function}, as in~\cite[5.7.1 p.~98]{vazquez2004asymptotic}. If we replace $m$ by $1-\frac1n$, we get that
\be{up}
\mathsf p=(n-1)\,u^{-\frac1n}
\ee
is such that $\D u^m=-\,u\,\D\mathsf p$ and $\frac{\D u}u=-\,n\,\frac{\D\mathsf p}{\mathsf p}$. The reader is invited to check that
\begin{multline*}
\irdmugen{\(\alpha^2\,\left|\frac{\partial w}{\partial r}\right|^2+\frac1{r^2}\,\left|\nabla w\right|^2\)}=\tfrac14\,\big(\tfrac{n-2}{n-1}\big)^2\,\mathcal I[u]\\
\mbox{and}\quad\irdmugen{|w|^p}=\irdmugen u\,.
\end{multline*}
For later purpose, let $\eta:=4\,\big(\tfrac{n-1}{n-2}\big)^2\,\alpha^{1-\frac2p}$. Collecting these considerations, we have shown that the optimal constant in~\eqref{CKN3} can be characterized as follows.
%------------------------------------------------------------------------------
\begin{prop}\label{Lem:MinimizationFisher} With the above notations and $\Lambda=4\,\alpha^2/(p-2)^2$, we have
\be{Eqn:InfFisher}
\mu(\Lambda)=\eta^{-1}\,\inf\left\{\mathcal I[u]\,:\,\irdmugen u=1\right\}\,.
\ee
\end{prop}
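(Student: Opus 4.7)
The statement is essentially a rewriting of inequality \eqref{CKN3} under the change of unknown \eqref{uw}, with the constant $\eta=4\bigl(\tfrac{n-1}{n-2}\bigr)^2\,\alpha^{1-2/p}$ absorbing all the factors. My plan is therefore to verify, by a direct computation, the two identities
\[
\irdmugen{|w|^p}=\irdmugen u,\qquad \irdmugen{|\D w|^2}=\tfrac14\,\bigl(\tfrac{n-2}{n-1}\bigr)^{2}\,\mathcal I[u],
\]
already announced in the excerpt, and then read off \eqref{Eqn:InfFisher} from \eqref{CKN3}.

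First, the mass identity is immediate: since $u=|w|^p$, one has $\int u\,d\mu=\int|w|^p\,d\mu$, so the normalization $\int u\,d\mu=1$ is equivalent to $\|w\|_{\mathrm L^p(d\mu)}=1$. Next, for the Fisher information I would assume $w>0$ smooth (the general case following by standard density / Kato-type arguments) and differentiate $\mathsf p=(n-1)\,u^{-1/n}$ using \eqref{up}:
\[
\D\mathsf p=-\tfrac{n-1}{n}\,u^{-1-\frac1n}\,\D u,\qquad \D u=p\,w^{p-1}\,\D w.
\]
Combining these gives
\[
u\,|\D\mathsf p|^2=\tfrac{(n-1)^2}{n^2}\,u^{-1-\frac2n}\,|\D u|^2=\tfrac{(n-1)^2}{n^2}\,p^2\,w^{\,p-2-\frac{2p}{n}}\,|\D w|^2.
\]
The algebraic point to notice is that $p-2=\tfrac{4}{n-2}=\tfrac{2p}{n}$ (using $p=\tfrac{2n}{n-2}$), so the weight $w^{p-2-2p/n}$ is identically~$1$. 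Hence
\[
u\,|\D\mathsf p|^2=\tfrac{p^2\,(n-1)^2}{n^2}\,|\D w|^2=4\,\bigl(\tfrac{n-1}{n-2}\bigr)^{2}\,|\D w|^2,
\]
which after integration against $d\mu$ yields the Fisher information formula claimed in the excerpt.

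Finally, plugging the two identities into \eqref{CKN3} gives, for every $w$ subject to $\int u\,d\mu=1$,
\[
\mu(\Lambda)\,\alpha^{1-\frac2p}\le\tfrac14\,\bigl(\tfrac{n-2}{n-1}\bigr)^{2}\,\mathcal I[u],
\]
i.e.\ $\eta\,\mu(\Lambda)\le\mathcal I[u]$, with equality achieved along optimizing sequences since $\mu(\Lambda)$ is by definition the sharp constant in \eqref{CKN3} and the correspondence $w\leftrightarrow u$ is bijective on the relevant admissible classes. Taking the infimum over admissible $u$ and dividing by $\eta$ yields \eqref{Eqn:InfFisher}. There is no genuine obstacle here: the only thing to watch is the exponent arithmetic $p-2-2p/n=0$, together with the usual caveat that the computation is first performed for smooth positive $w$ and then extended by density—these are the only steps where one might slip, and they are both routine given the explicit relations \eqref{uw}--\eqref{up}.
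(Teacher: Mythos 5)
Your verification is correct and is exactly the computation the paper leaves to the reader: the exponent cancellation $p-2-\tfrac{2p}{n}=0$ and the constant $\bigl(\tfrac{p\,(n-1)}{n}\bigr)^2=4\,\bigl(\tfrac{n-1}{n-2}\bigr)^2$ reproduce the two displayed identities preceding the proposition, and the conclusion then follows from \eqref{CKN3} by homogeneity exactly as you say. This is the same route as the paper's (implicit) proof, so nothing further is needed.
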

%------------------------------------------------------------------------------
We are interested not only in minimizers, but also in critical points of $\mathcal I$ under the mass constraint.

Next we consider the \emph{fast diffusion equation} in $\R^d$, with \hbox{$d\ge2$}, given~by
\be{FDE}
\frac{\partial u}{\partial t}=\L u^m\,,\quad m=1-\frac1n\,.
\ee
At a heuristic level,~\eqref{FDE} preserves the mass and decreases the Fisher information when the parameters are in the symmetry range determined by~\eqref{Range}. In this range, the decay of the Fisher information is strict, except for self-similar solutions which correspond to symmetric critical points of $\mathcal I$ under a mass constraint, as we shall see below. We will actually use the flow only to characterize the direction in which one has to perturb an arbitrary critical point. To understand why such a direction has to be considered, it is useful to use the quantities provided by the nonlinear flow. Let us give some details.

\medskip Equation~\eqref{FDE} admits self-similar solutions of Barenblatt type, which are given~by
\[
u_\star(t,r,\omega)=t^{-n}\(\mathsf c_\star+\frac{r^2}{2\,(n-1)\,\alpha^2\,t^2}\)^{-n}\quad\forall\,(t,r,\omega)\in(0,\infty)\times(0,\infty)\times\M\,.
\]
The constant $c_\star$ is a numerical constant which has to be adjusted so that \hbox{$\irdmugen{u_\star}=1$}. It has an explicit value and depends only on $d$ and $n$. Also notice that the variable $t$ plays the role of a scaling parameter. Except in this section, in which we deal with an evolution problem for heuristical reasons, $t$ has to be understood in the sense of a positive scale.

If we assume that the solution to~\eqref{FDE} is supplemented with an initial datum $u(t=0,x)=u_0(x)\ge0$ such that $\irdmugen{u_0}=1$, then it makes sense to consider a solution to the Cauchy problem which preserves the mass, that is, such that
\[
\frac d{dt}\irdmugen u=0\,,
\]
as was done for the classical fast diffusion equation in~\cite{MR797051}, and consider $\frac d{dt}\mathcal I[u(t,\cdot)]$.

The functional $\mathcal I$ is invariant under scalings. Indeed, let $\lambda$ be an arbitrary positive real number. If we consider $u_\lambda(x)=\lambda^d\,u(\lambda\,x)$ for any $x\in\R^d$, we get that $\mathcal I[u_\lambda]=\mathcal I[u]$ for any $\lambda>0$. As a special case, when $u=u_\star$, it is clear that $\mathcal I[u_\star]$ is independent of $t>0$. In the symmetry range, the function $u_\star$ is optimal for~\eqref{Eqn:InfFisher}. In any case, we have the following characterization.
%------------------------------------------------------------------------------
\begin{prop}\label{Lem:MinimizationFisherSymmetric} With the above notations and $\Lambda=4\,\alpha^2/(p-2)^2$, we have
\[
\eta\,\mu_\star(\Lambda)=\mathcal I[u_\star(t,\cdot)]\quad\forall\,t>0\,.
\]
\end{prop}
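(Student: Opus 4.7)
My plan is to prove the statement in two stages: first establish that $\mathcal I[u_\star(t,\cdot)]$ does not depend on $t$, and then identify the common value with $\eta\,\mu_\star(\Lambda)$.

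For the $t$-independence, I would observe that the family $u_\star(t,\cdot)$ satisfies the self-similar relation $u_\star(t,r,\omega)=t^{-n}\,u_\star(1,r/t,\omega)$, which corresponds to the (mass-preserving) scaling $u\mapsto\lambda^n\,u(\lambda\,\cdot)$ with $\lambda=1/t$, adapted to the measure $d\mu=r^{n-1}\,dr\,dv_g$. A direct computation using $\mathsf p=(n-1)\,u^{-1/n}$ and the definition of $\D$ shows that $\mathcal I$ is invariant under this scaling, hence $\mathcal I[u_\star(t,\cdot)]=\mathcal I[u_\star(1,\cdot)]$ for every $t>0$. Alternatively, one can evaluate the defining integral of $\mathcal I[u_\star(t,\cdot)]$ directly by the substitution $\rho=r/\bigl(\alpha t\sqrt{2\,(n-1)}\bigr)$, which eliminates $t$ from the resulting integrand in~$\rho$.

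For the numerical value, I would trace $u_\star$ back through the changes of variables of Sections~\ref{Sec:Sobolev} and~\ref{Sec:Flow}. Setting $w_\star=u_\star^{1/p}$ as in~\eqref{uw} and then $\varphi_\star(s,\omega)=e^{2s/(p-2)}\,w_\star\bigl(e^{-\alpha s},\omega\bigr)$ as in~\eqref{wvarphi}, and using $2/(p-2)=(n-2)/2$, a short algebraic manipulation shows that
\[
\varphi_\star(s,\omega)=\beta'\,\bigl(\cosh(\alpha(s-s_0))\bigr)^{-2/(p-2)}
\]
for constants $\beta'$ and $s_0$ determined by $\mathsf c_\star$, $\alpha$ and~$t$. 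That is, $\varphi_\star$ coincides with the symmetric profile~\eqref{varphistar} up to a translation in $s$ (under which all the norms in~\eqref{ineqcylindergeneral} are invariant) and a multiplicative rescaling. This identification is exactly what is needed to transport optimal constants between the Fisher-information formulation and the cylinder formulation.

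To conclude, I would combine Proposition~\ref{Lem:MinimizationFisher}, restricted to radial $u$, with the definition $\mu_\star(\Lambda)=(\mathrm{vol}_g(\M))^{1-2/p}\,\mu_\R(\Lambda)$ and with the fact that $\varphi_\Lambda$ saturates the one-dimensional inequality giving $\mu_\R(\Lambda)$. Under the three composed changes of variables just described, the symmetric version of the infimum in~\eqref{Eqn:InfFisher} is attained by $u_\star$, and the constant $\mathsf c_\star$ is fixed precisely by the mass normalization $\int u_\star\,d\mu=1$. Putting these pieces together yields $\mathcal I[u_\star(t,\cdot)]=\eta\,\mu_\star(\Lambda)$. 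The main obstacle is purely bookkeeping: one must reconcile the mass normalization of $u_\star$ with the $\mathrm L^p$-normalization implicit in the formula for $\mu_\R(\Lambda)$, and keep track of how $\mathsf c_\star$, $\alpha$, $t$ and the exponents $(n-2)/2$, $1/p$, $1/n$ combine through the three transformations. Conceptually the statement is a restatement, in the generalized Fisher-information language, of the fact that $\varphi_\Lambda$ is extremal for the underlying one-dimensional Gagliardo--Nirenberg inequality.
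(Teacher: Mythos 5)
Your argument is correct and is essentially the one the paper intends: the scaling invariance of $\mathcal I$ gives the $t$-independence (this is the only hint the paper provides, the remaining details being explicitly left to the reader), and unwinding the changes of variables identifies $u_\star$ with a translate and constant multiple of $\varphi_\Lambda$, whose scale- and translation-invariant Rayleigh quotient equals $\mu_\star(\Lambda)$ by definition, the mass normalization being absorbed into $\mathsf c_\star$. One caution for the bookkeeping you defer: the prefactor in~\eqref{wvarphi} must be read as $e^{-2\,\alpha\,s/(p-2)}$ (as follows from composing~\eqref{EF} with~\eqref{wv}); with the exponent exactly as printed the computation does not produce the $\cosh$ profile of~\eqref{varphistar}.
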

%------------------------------------------------------------------------------
This result is easy to prove (details are left to the reader) and the question is to know under which conditions we also have $\mathcal I[u_\star(t,\cdot)]=\eta\,\mu(\Lambda)$, \emph{i.e.}, $\mu(\Lambda)=\mu_\star(\Lambda)$.

Our strategy for proving Theorem~\ref{Thm:Main1} is to establish that in the range~\eqref{Range}, the converse is also true, namely that $\frac d{dt}\mathcal I[u(t,\cdot)]=0$ implies that $u=u_\star$ up to a time shift, that is, up to a rescaling. Heuristically, this can be done as follows. If $u$ solves~\eqref{FDE}, then the \emph{pressure function} $\mathsf p$ given by~\eqref{up} solves
\be{Eqn:v}
\frac{\partial\mathsf p}{\partial t}=\frac1n\,\mathsf p\,\L\mathsf p-|\D\mathsf p|^2\,.
\ee
Let us define
\be{QK}
\Q[\mathsf p]:=\frac12\,\L\,|\D\mathsf p|^2-\D\mathsf p\cdot\D\L\mathsf p\quad\mbox{and}\quad\mathcal K[\mathsf p]:=\irdmugen{\(\Q[\mathsf p]-\frac1n\,(\L\mathsf p)^2\)\mathsf p^{1-n}}\,.
\ee
In order to handle boundary terms, we also define
\be{b}
\mathsf b(r):=r^{n-1}\iM{\Big(\tfrac\partial{\partial r}\(\mathsf p^{1-n}\,|\D\mathsf p|^2\)-\,\frac2n\,\mathsf p^{1-n}\,\mathsf p'\,\L\mathsf p\Big)}\,.
\ee
%------------------------------------------------------------------------------
\begin{lem}\label{Lem:DerivFisher} With the notations defined by~\eqref{up},~\eqref{QK} and~\eqref{b}, if $u$ is a smooth solution of~\eqref{FDE}, with $\alpha\le \alpha_{\rm FS}$, and if $\lim_{r\to0_+}\mathsf b(r)=0=\lim_{r\to+\infty}\mathsf b(r)$ for any~$t>0$, then
\[\label{BLWevol}
\frac d{dt}\mathcal I[u(t,\cdot)]=-\,2\,(n-1)^{n-1}\,\mathcal K[\mathsf p(t,\cdot)]\,.
\]
\end{lem}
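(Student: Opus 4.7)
The plan is to compute $\tfrac d{dt}\mathcal I[u(t,\cdot)]$ directly, converting everything to the pressure variable and then reorganizing by integration by parts on $(0,\infty)\times\M$. Starting from~\eqref{up} in the form $u=(n-1)^n\,\mathsf p^{-n}$, the Fisher information~\eqref{Fisher} reads
\[
\mathcal I[u]=(n-1)^n\irdmugenn{\mathsf p^{-n}\,|\D\mathsf p|^2}\,.
\]
Differentiating under the integral sign and substituting the pressure equation~\eqref{Eqn:v} gives a pointwise expression for $\partial_t(\mathsf p^{-n}|\D\mathsf p|^2)$ assembled from the five monomials $\mathsf p^{-n}\L\mathsf p\,|\D\mathsf p|^2$, $\mathsf p^{-n-1}|\D\mathsf p|^4$, $\mathsf p^{1-n}\D\mathsf p\cdot\D\L\mathsf p$ and $\mathsf p^{-n}\D\mathsf p\cdot\D|\D\mathsf p|^2$, each with an explicit numerical coefficient. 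This first step is pure calculus and uses no hypothesis on~$\alpha$.

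The second step is the integration by parts used to recast the mixed term $\D\mathsf p\cdot\D|\D\mathsf p|^2$ in divergence form. Computing the formal adjoint of $\D$ against the weight~$\mathsf p^{-n}$ produces
\[
\irdmugenn{\mathsf p^{-n}\,\D\mathsf p\cdot\D|\D\mathsf p|^2}=-\irdmugenn{\mathsf p^{-n}\,\L\mathsf p\,|\D\mathsf p|^2}+n\irdmugenn{\mathsf p^{-n-1}\,|\D\mathsf p|^4}+\mathrm{bdry}_1\,,
\]
where $\mathrm{bdry}_1$ is a purely radial contribution since $\M$ is closed. A second integration by parts based on the identity $\D\mathsf p^{1-n}=(1-n)\,\mathsf p^{-n}\D\mathsf p$ allows to replace the remaining mixed integral by $\tfrac1{n-1}\int\mathsf p^{1-n}\L|\D\mathsf p|^2\,d\mu$. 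Invoking the Bochner-type definition $\mathsf Q[\mathsf p]=\tfrac12\L|\D\mathsf p|^2-\D\mathsf p\cdot\D\L\mathsf p$ from~\eqref{QK}, the surviving interior terms then group into $\int\mathsf p^{1-n}\big(\mathsf Q[\mathsf p]-\tfrac1n(\L\mathsf p)^2\big)\,d\mu=\mathcal K[\mathsf p]$. Combining the pre-factor $(n-1)^n$ from Step~1 with the $1/(n-1)$ produced by $\D\mathsf p^{1-n}$ yields the announced coefficient $-2\,(n-1)^{n-1}$.

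The main obstacle, and the part of the argument requiring the most care, is the bookkeeping of the boundary contributions. Each radial integration by parts generates a term of the form $r^{n-1}\int_\M(\cdots)\,d\kern1pt v_g$ evaluated at $r=0^+$ and $r=+\infty$; no angular boundary terms appear because $\M$ is compact without boundary. I would collect every such contribution coming from Step~2 and verify that, after using the first radial IBP once and the second one to convert $\D\mathsf p\cdot\D\L\mathsf p$, the sum consolidates into a single scalar function of $r$ whose expression is precisely $\mathsf b(r)$ given in~\eqref{b}: the coefficient $2/n$ in front of $\mathsf p^{1-n}\mathsf p'\L\mathsf p$ is forced by the same $(n-1)^{-1}$ factor that fixes $(n-1)^{n-1}$ in the interior, while the first term $\partial_r(\mathsf p^{1-n}|\D\mathsf p|^2)$ is the exact divergence produced by combining the radial boundary contributions of $\mathsf p^{-n}\D\mathsf p\cdot\D|\D\mathsf p|^2$ and $\mathsf p^{1-n}\L|\D\mathsf p|^2$. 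The hypothesis $\lim_{r\to 0^+}\mathsf b(r)=0=\lim_{r\to+\infty}\mathsf b(r)$ then kills these contributions and yields the identity. I note that the condition $\alpha\le\alpha_{\rm FS}$ in the statement plays no role in establishing the identity itself; it is only needed in subsequent arguments to ensure a sign for $\mathcal K[\mathsf p]$. Conceptually the computation is a weighted, non-integer-dimensional version of the classical \emph{carr\'e du champ} identity, so the remaining challenge is strictly algebraic: matching powers of $\mathsf p$, the factor $r^{n-1}$ from the measure, and the precise coefficients in~\eqref{b}.
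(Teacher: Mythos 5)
Your proposal is correct and follows essentially the same route as the paper's proof: differentiate the Fisher information, substitute the evolution equation, integrate by parts on $(r,R)\times\M$, and recognize $\mathcal K[\mathsf p]$ (the paper keeps $u$ and $u^m$ in play via $u\,\D\mathsf p=-\,\D(u^m)$ and $u\,\mathsf p=(n-1)\,u^m$, whereas you substitute $u=(n-1)^n\,\mathsf p^{-n}$ from the outset, which is only a notational difference; your observation that $\alpha\le\alpha_{\rm FS}$ plays no role in the identity is likewise consistent with the paper). The one step you gloss over is that the term $\tfrac1n\,(\L\mathsf p)^2$ in $\mathcal K[\mathsf p]$ never appears among the monomials produced by the two integrations by parts you name; you need one further weighted integration by parts, namely $\irdmugenn{\mathsf p^{1-n}(\L\mathsf p)^2}=(n-1)\irdmugenn{\mathsf p^{-n}\,\L\mathsf p\,|\D\mathsf p|^2}-\irdmugenn{\mathsf p^{1-n}\,\D\mathsf p\cdot\D\L\mathsf p}$ up to boundary terms of the same type, after which the coefficients close exactly as you claim.
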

%------------------------------------------------------------------------------

\begin{proof} All integrals are taken on $(0,\infty)\times\M$. Using~\eqref{FDE} and~\eqref{Eqn:v}, we can compute
\begin{eqnarray*}
\frac d{dt}\irdmugenn{u\,|\D\mathsf p|^2}&=&\irdmugenn{\frac{\partial u}{\partial t}\,|\D\mathsf p|^2}+\,2\irdmugenn{u\,\D\mathsf p\cdot\D\frac{\partial\mathsf p}{\partial t}}\\
&=&\irdmugenn{\L(u^m)\,|\D\mathsf p|^2}+\,2\irdmugenn{u\,\D\mathsf p\cdot\D\Big(\frac1n\,\mathsf p\,\L\mathsf p-|\D\mathsf p|^2\Big)}\,.
\end{eqnarray*}
If we omit all boundary terms, we get that
\begin{eqnarray*}
\frac d{dt}\irdmugenn{u\,|\D\mathsf p|^2}&=&\irdmugenn{u^m\,\L|\D\mathsf p|^2}+\,\frac2n\irdmugenn{u\,\mathsf p\,\D\mathsf p\cdot\D\L\mathsf p}\\
&&\qquad+\,\frac2n\irdmugenn{u\,\D\mathsf p\cdot\D\mathsf p\,\L\mathsf p}-\,2\irdmugenn{u\,\D\mathsf p\cdot\D\,|\D\mathsf p|^2}\\
&=&-\irdmugenn{u^m\,\L|\D\mathsf p|^2}+\,\frac2n\irdmugenn{u\,\mathsf p\,\D\mathsf p\cdot\D\L\mathsf p}\\
&&\qquad+\,\frac2n\irdmugenn{u\,\D\mathsf p\cdot\D\mathsf p\,\L\mathsf p}
\end{eqnarray*}
where the last line is given by an integration by parts:
\[
\irdmugenn{u\,\D\mathsf p\cdot\D\,|\D\mathsf p|^2}=-\irdmugenn{\D(u^m)\cdot\D\,|\D\mathsf p|^2}=\irdmugenn{u^m\,\L|\D\mathsf p|^2}\,.
\]
1) By definition of $\mathcal Q$, we get that
\[
\irdmugenn{u^m\,\L|\D\mathsf p|^2}=2\irdmugenn{u^m\,\Q[\mathsf p]}+2\irdmugenn{u^m\,\D\mathsf p\cdot\D\L\mathsf p}\,.
\]
2) Since $u\,\D\mathsf p=-\,\D(u^m)$, an integration by parts gives
\begin{multline*}
\irdmugenn{u\,\D\mathsf p\cdot\D\mathsf p\,\L\mathsf p}=-\irdmugenn{\D(u^m)\cdot\D\mathsf p\,\L\mathsf p}\\
=\irdmugenn{u^m\,(\L\mathsf p)^2}+\irdmugenn{u^m\,\D\mathsf p\cdot\D\L\mathsf p}
\end{multline*}
and with $u\,\mathsf p=(n-1)\,u^m$ we find that
\begin{multline*}
\frac2n\irdmugenn{u\,\mathsf p\,\D\mathsf p\cdot\D\L\mathsf p}+\,\frac2n\irdmugenn{u\,\D\mathsf p\cdot\D\mathsf p\,\L\mathsf p}\\
=\frac2n\irdmugenn{u^m\,(\L\mathsf p)^2}+2\irdmugenn{u^m\,\D\mathsf p\cdot\D\L\mathsf p}\,.
\end{multline*}
Collecting terms establishes Lemma~\ref{Lem:DerivFisher}.

Of course, in the previous computations, when integrating by parts, boundary terms have to be taken into account. Integrals are first taken on the domain $(r,R)\times\M$ with $0<r<R$. When doing integrations by parts, we get that all boundary terms are $\mathsf b(R)$ and $\mathsf b(r)$. The result follows from our assumptions.
\end{proof}

A natural approach is to show that $\mathcal I[u(t,\cdot)]$ is nonincreasing by proving that $\mathcal K[\mathsf p(t,\cdot)]$ is nonnegative for any $t>0$ in the range~\eqref{Range}. By Lemma~\ref{Lem:DerivFisher}, it is enough to prove that $\lim_{r\to0_+}\mathsf b(r)=0=\lim_{r\to+\infty}\mathsf b(r)$ for any $t>0$. As a consequence the flow drives any initial condition towards a minimum of $\mathcal I$, which is a self-similar solution of Barenblatt type, \emph{i.e.}, $u_\star$ up to a translation with respect to $t$. Hence $\mathcal I[u]\ge\mathcal I[u_\star(t,\cdot)]$, which is equivalent to the sharp form of inequality \eqref{CKN}.

We can avoid considering the flow by focusing on critical points for $\mathcal I[u]$ under the mass constraint. The Euler-Lagrange equation integrated against $\mathcal L\,u^m$ must vanish, provided that the integral exists. Heuristically, this quantity coincides with the time derivative of $\mathcal I[u(t,\cdot)]$ at the critical point. By Lemma~\ref{Lem:DerivFisher}, $\mathcal K[\mathsf p]$ must then vanish for critical points. Since $u$ solves an elliptic equation, we have additional regularity and decay properties of which we can take advantage to get rid of the boundary terms. With slight technical modifications, this is the line of arguments we shall use to prove the main results of the paper in Section \ref{Sec:Proofs}.

%%%%%%%%%%%%%%%%%%%%%%%%%%%%%%%%%%%%%%%%%%%%%%%%%%%%%%%%%%%%%%%%%%%%%%%%%%%%%%%
%%%%%%%%%%%%%%%%%%%%%%%%%%%%%%%%%%%%%%%%%%%%%%%%%%%%%%%%%%%%%%%%%%%%%%%%%%%%%%%
\section{The key computations}\label{Sec:Computations}

The goal of this section is to characterize the functions such that $\mathcal K[\mathsf p]=0$: see Corollary~\ref{Cor:DerivFisherSign}. This result is the main ingredient of our method.

%%%%%%%%%%%%%%%%%%%%%%%%%%%%%%%%%%%%%%%%%%%%%%%%%%%%%%%%%%%%%%%%%%%%%%%%%%%%%%%
\subsection{A preliminary computation}

The calculations below are carried out for a function $\mathsf p$ defined on $(0,\infty)\times\M$ where $\M$ is a $d-1$ dimensional smooth, compact Riemannian manifold. Here $'$ and $\nabla$ respectively denote the derivative with respect to $r$ and the gradient on $\M$, $g$ is the metric tensor, $\Delta$ the Laplace-Beltrami operator and $dv_g$ the volume element. We recall that
\[
\mathcal L\,\mathsf p=\alpha^2\,\mathsf p''+\alpha^2\,\frac{n-1}r\,\mathsf p'+\frac{\Delta\,\mathsf p}{r^2}\,,
\]
\[
\D\mathsf p\cdot\D w=\alpha^2\,\mathsf p'w'+\frac{\nabla\mathsf p\cdot\nabla w}{r^2}\quad\mbox{and}\quad|\D\mathsf p|^2=\alpha^2\,|\mathsf p'|^2+\frac{|\nabla\mathsf p|^2}{r^2}\,.
\]
We also define
\[
\kk[\mathsf p]:=\mathcal Q(\mathsf p)-\frac1n\,(\mathcal L\,\mathsf p)^2=\frac12\,\mathcal L\,|\D\mathsf p|^2-\D\mathsf p\cdot\D\,\mathcal L\,\mathsf p-\frac1n\,(\mathcal L\,\mathsf p)^2\,,
\]
and
\[
\kk_{\M}[\mathsf p]:=\frac12\,\Delta\,|\nabla\mathsf p|^2-\nabla\mathsf p\cdot\nabla\Delta\,\mathsf p-\tfrac1{n-1}\,(\Delta\,\mathsf p)^2-(n-2)\,\alpha^2\,|\nabla\mathsf p|^2\,.
\]
%------------------------------------------------------------------------------
\begin{lem}\label{Lem:Derivmatrixform1} Let $n\not=1$ be any real number, $d\in\N$, $d\ge2$, and consider a function $\mathsf p\in C^3((0,\infty)\times\M)$, where $(\M,g)$ is a smooth, compact Riemannian manifold. Then we have
\[
\kk[\mathsf p]=\alpha^4\(1-\frac1n\)\left[\mathsf p''-\frac{\mathsf p'}r-\frac{\Delta\,\mathsf p}{\alpha^2\,(n-1)\,r^2}\right]^2+2\,\alpha^2\,\frac1{r^2}\left|\nabla\mathsf p'-\frac{\nabla\mathsf p}r\right|^2+\frac1{r^4}\,\kk_{\M}[\mathsf p]\,.
\]\end{lem}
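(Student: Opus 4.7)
\medskip\noindent\textbf{Proof plan.} The identity is purely algebraic: both sides of the claim are differential polynomials in $\mathsf p$ up to order two (after the third-order derivatives cancel), and the strategy is to expand all three summands of
\[
\kk[\mathsf p]=\tfrac12\,\mathcal L\,|\D\mathsf p|^2-\D\mathsf p\cdot\D\mathcal L\mathsf p-\tfrac1n\,(\mathcal L\mathsf p)^2
\]
directly in the coordinates $(r,\omega)$ and then recognize the result as a sum of squares plus the angular piece $\kk_\M[\mathsf p]/r^4$. I would proceed in four steps.

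\medskip\noindent\emph{Step 1: expand.} Write $f:=|\D\mathsf p|^2=\alpha^2\,|\mathsf p'|^2+|\nabla\mathsf p|^2/r^2$ and compute $\tfrac12\mathcal L f$ by applying $\mathcal L=\alpha^2\partial_r^2+\alpha^2(n-1)r^{-1}\partial_r+r^{-2}\Delta$ term by term; in particular one needs $(r^{-2}|\nabla\mathsf p|^2)'=-2r^{-3}|\nabla\mathsf p|^2+2r^{-2}\nabla\mathsf p\cdot\nabla\mathsf p'$ and the second derivative thereof. Analogously, expand $\D\mathsf p\cdot\D\mathcal L\mathsf p$ using $\mathcal L\mathsf p=\alpha^2\mathsf p''+\alpha^2(n-1)\mathsf p'/r+\Delta\mathsf p/r^2$ and the two components $\alpha^2\mathsf p'\partial_r$ and $r^{-2}\nabla\mathsf p\cdot\nabla$ of the inner product, taking care with $(r^{-2})'$ and $(r^{-1})'$.

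\medskip\noindent\emph{Step 2: cancel the third-order terms.} The radial third-order terms $\alpha^4\mathsf p'\mathsf p'''$ appearing in $\tfrac12\mathcal L|\D\mathsf p|^2$ and in $\D\mathsf p\cdot\D\mathcal L\mathsf p$ cancel exactly, and the same occurs for the mixed third-order terms of the form $\alpha^2 r^{-2}\nabla\mathsf p\cdot\nabla\mathsf p''$ and $\alpha^2r^{-2}\mathsf p'\partial_r\Delta\mathsf p$. What remains is quadratic in the second derivatives $\mathsf p''$, $\Delta\mathsf p$, $\nabla\mathsf p'$ and in the first derivatives $\mathsf p'$, $\nabla\mathsf p$, with coefficients that are explicit functions of $r$.

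\medskip\noindent\emph{Step 3: isolate the angular piece.} Group together every term that contains $\Delta|\nabla\mathsf p|^2$, $\nabla\mathsf p\cdot\nabla\Delta\mathsf p$, $(\Delta\mathsf p)^2$ and $|\nabla\mathsf p|^2$ with coefficient involving only $r^{-4}$. A careful accounting — crucially the contribution $\alpha^2(n-5)r^{-3}\nabla\mathsf p\cdot\nabla\mathsf p'$ arising from $\tfrac12\mathcal L[|\nabla\mathsf p|^2/r^2]$, combined with $-\alpha^2(n-1)r^{-3}\nabla\mathsf p\cdot\nabla\mathsf p'$ from $\D\mathsf p\cdot\D\mathcal L\mathsf p$ — shows that the $r^{-4}\,|\nabla\mathsf p|^2$ coefficient gathers as $-(n-2)\alpha^2$, exactly what is needed so that the pure-$\M$ block equals $r^{-4}\,\kk_\M[\mathsf p]$.

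\medskip\noindent\emph{Step 4: complete the squares.} What is left splits into two further blocks. The radial–scalar block involves only $\mathsf p''$, $\mathsf p'/r$ and $\Delta\mathsf p/r^2$; one checks that it equals
\[
\alpha^4\(1-\tfrac1n\)\(\mathsf p''-\tfrac{\mathsf p'}r-\tfrac{\Delta\mathsf p}{\alpha^2(n-1)\,r^2}\)^2,
\]
which is the unique rearrangement consistent with the coefficient $-\tfrac1n(\mathcal L\mathsf p)^2$ and the fact that the pure $\mathsf p''\cdot\Delta\mathsf p$ cross term must vanish in $\kk[\mathsf p]$ after cancellation. The remaining mixed block contains $|\nabla\mathsf p'|^2/r^2$, $\nabla\mathsf p\cdot\nabla\mathsf p'/r^3$ and $|\nabla\mathsf p|^2/r^4$ with coefficients that combine into $2\alpha^2\,r^{-2}\,|\nabla\mathsf p'-\nabla\mathsf p/r|^2$.

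\medskip\noindent\emph{Main obstacle.} The computation itself is routine but bookkeeping is delicate: several terms with the same tensorial structure appear with different $n$-dependent coefficients coming from $\mathcal L$, from $\D\mathsf p\cdot\D\mathcal L\mathsf p$ and from $(\mathcal L\mathsf p)^2/n$, and they must combine into the specific coefficients $1-1/n$, $2$ and $-(n-2)$. The key is that the coefficient of $\mathsf p''\,\Delta\mathsf p/r^2$ forces the factor $\alpha^2(n-1)$ in the denominator inside the radial square, and the coefficient of $|\nabla\mathsf p|^2/r^4$ is the one that locks the value of $\kk_\M[\mathsf p]$. Once these two consistency checks are performed, the decomposition of Lemma~\ref{Lem:Derivmatrixform1} is forced.
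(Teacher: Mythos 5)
Your proposal is correct and follows essentially the same route as the paper: the paper's proof is exactly this brute-force expansion of $\tfrac12\,\mathcal L\,|\D\mathsf p|^2-\D\mathsf p\cdot\D\mathcal L\mathsf p-\tfrac1n\,(\mathcal L\mathsf p)^2$ in the coordinates $(r,\omega)$, cancellation of the third-order terms, and regrouping by powers of $\alpha$ into the two squares plus $r^{-4}\,\kk_\M[\mathsf p]$, and your coefficient checks (the $(n-5)$ and $-(n-1)$ contributions to $r^{-3}\,\nabla\mathsf p\cdot\nabla\mathsf p'$, the residual $-(n-2)\,\alpha^2$ in front of $r^{-4}|\nabla\mathsf p|^2$) agree with the paper's displayed expansion. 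One wording caveat: the $\mathsf p''\,\Delta\mathsf p/r^2$ cross term does not vanish in $\kk[\mathsf p]$ — it survives with coefficient $-\tfrac2n\,\alpha^2$ coming solely from $-\tfrac1n(\mathcal L\mathsf p)^2$ — which is precisely what forces the $\alpha^2(n-1)$ denominator inside the radial square, as you correctly note in your closing paragraph.
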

%------------------------------------------------------------------------------
\begin{proof} By definition of $\kk[\mathsf p]$, we have
\begin{eqnarray*}
\kk[\mathsf p]&=&\textstyle\frac{\alpha^2}2\left[\alpha^2\,\mathsf p'^2+\frac{|\nabla\mathsf p|^2}{r^2}\right]''+\frac{\alpha^2}2\frac{n-1}r
\left[\alpha^2\,\mathsf p'^2+\frac{|\nabla\mathsf p|^2}{r^2}\right]'+\frac1{2\,r^2}\,\Delta\left[\alpha^2\,\mathsf p'^2+\frac{|\nabla\mathsf p|^2}{r^2}\right]\\
&&\textstyle-\,\alpha^2\,\mathsf p'\(\alpha^2\,\mathsf p''+\alpha^2\,\frac{n-1}r\,\mathsf p'+\frac{\Delta\,\mathsf p}{r^2}\)'-\frac1{r^2}
\nabla\mathsf p\cdot\nabla\(\alpha^2\,\mathsf p''+\alpha^2\,\frac{n-1}r\,\mathsf p'+\frac{\Delta\,\mathsf p}{r^2}\)\\
&&\textstyle-\frac1n\(\alpha^2\,\mathsf p''+\alpha^2\,\frac{n-1}r\,\mathsf p'+\frac{\Delta\,\mathsf p}{r^2}\)^2\,,
\end{eqnarray*}
which can be expanded as
\begin{eqnarray*}
&&\hspace*{-18pt}\textstyle\frac{\alpha^2}2\left[ 2\,\alpha^2\,\mathsf p''^2+2\,\alpha^2\,\mathsf p'\,\mathsf p'''+2\,\frac{|\nabla\mathsf p'|^2+\nabla\mathsf p\cdot\nabla\mathsf p''}{r^2}
-8\,\frac{\nabla\mathsf p\cdot\nabla\mathsf p'}{r^3}+6\,\frac{|\nabla\mathsf p|^2}{r^4}\right]\\
&&\textstyle\hspace*{-18pt}+\,\alpha^2\,\frac{n-1}r\left[\alpha^2\,\mathsf p'\,\mathsf p''+\frac{\nabla\mathsf p\cdot\nabla\mathsf p'}{r^2}-\frac{|\nabla\mathsf p|^2}{r^3}\right]+\frac1{r^2}\left[\alpha^2\,\mathsf p'\Delta\,\mathsf p'+\alpha^2\,|\nabla\mathsf p'|^2+\frac{\Delta\,|\nabla\mathsf p|^2}{2\,r^2}\right]\\
&&\textstyle\hspace*{-18pt}-\,\alpha^2\,\mathsf p'\(\alpha^2\,\mathsf p'''+\alpha^2\,\frac{n-1}r\,\mathsf p''-\,\alpha^2\,\frac{n-1}{r^2}\mathsf p'-2\,\frac{\Delta\,\mathsf p}{r^3}+\frac{\Delta\,\mathsf p'}{r^2}\)\\
&&\hspace*{2cm}\textstyle-\frac1{r^2}
\(\alpha^2\,\nabla\mathsf p\cdot\nabla\mathsf p''+\alpha^2\,\frac{n-1}r\nabla\mathsf p\cdot\nabla\mathsf p'+\frac{\nabla\mathsf p\cdot\nabla\Delta\,\mathsf p}{r^2}\)\\
&&\textstyle\hspace*{-18pt}-\frac1n\left[\alpha^4\,\mathsf p''^2+\alpha^4\,\frac{(n-1)^2}{r^2}\,\mathsf p'^2+\frac{(\Delta\,\mathsf p)^2}{r^4}+2\,\alpha^4\,\frac{n-1}r\,\mathsf p'\,\mathsf p''+2\,\alpha^2\,\frac{\mathsf p''\Delta\,\mathsf p}{r^2}+2\,\alpha^2\,\frac{n-1}{r^3}\mathsf p'\Delta\,\mathsf p\right].
\end{eqnarray*}
By ordering the terms in powers of $\alpha$, we get the result.
\end{proof}

%%%%%%%%%%%%%%%%%%%%%%%%%%%%%%%%%%%%%%%%%%%%%%%%%%%%%%%%%%%%%%%%%%%%%%%%%%%%%%%
\subsection{An identity if \texorpdfstring{$d\ge3$}{d ge 3}}\label{Subsec:dge3}
On the smooth compact Remannian manifold $(\M,g)$ we denote by $\mathrm H\fv$ the \emph{Hessian} of $\fv$, i.e, $\mathrm H\fv_{i;j}=\nabla_i\,\partial_j\fv$ where $\nabla_j$ denotes the covariant derivative. Thus $\mathrm H\fv$ is a symmetric covariant tensor of rank $2$. With a slight abuse of language we identify its trace with the Laplace-Beltrami operator
\[
\Delta\,f=\mathrm{Tr}\,(\mathrm H\fv)=\sum_{i,j} g^{i,j}\,\mathrm H\fv_{j;i}
\]
where, as usual, $\sum_j g^{i,j}g_{j,k}=\delta_{i,k}$. If $\mathrm A$ and $\mathrm B$ are covariant tensors, we will also abbreviate the notations by using
\[
\mathrm A:\mathrm B:=\sum_{i,j,k,l} g^{i,j}\,\mathrm A_{j,k}g^{k,l}\,\mathrm B_{l,i}\quad\mbox{and}\quad\|\mathrm A\|^2:=\mathrm A:\mathrm A\,.
\]
It will be convenient to introduce the \emph{trace free Hessian}
\[
\mathrm L\fv:=\mathrm H\fv-\frac1{d-1}\,(\Delta\fv)\,g\,,
\]
Let us define the tensor $\mathrm Z\fv$ and its trace free counterpart by
\[
\mathrm Z\fv:=\frac{\nabla\fv\otimes\nabla\fv}\fv\quad\mbox{and}\quad\mathrm M\fv:=\mathrm Z\fv-\frac1{d-1}\,\frac{|\nabla\fv|^2}\fv\,g\,.
\]
We use the notations $\lambda_\theta$, $\lambda_\star=\lambda_{\theta_\star}$ and $\delta=\frac1{d-1}-\frac1{n-1}$ of the introduction.
%------------------------------------------------------------------------------
\begin{lem}\label{Lem:PoincareGen} Assume that $d\ge3$ and $n>d$. If $\mathsf p$ is a positive function in $C^3(\M)$, then
\[
\iM{\kk_{\M}[\mathsf p]\,\mathsf p^{1-n}}\ge\big[\lambda_\star-(n-2)\,\alpha^2\big]\iM{|\nabla\mathsf p|^2\,\mathsf p^{1-n}}\,.
\]
If $\M=\S^{d-1}$, there is a positive constant $\zeta_\star$ such that
\begin{multline*}
\isph{\kk_{\M}[\mathsf p]\,\mathsf p^{1-n}}\ge\big[\lambda_\star-(n-2)\,\alpha^2\big]\isph{|\nabla\mathsf p|^2\,\mathsf p^{1-n}}\\
+\zeta_\star\,(n-d)\isph{|\nabla\mathsf p|^4\,\mathsf p^{1-n}}\,.
\end{multline*}
\end{lem}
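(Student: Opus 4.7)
The plan is to reduce the claim to a weighted Bochner computation on $\M$ and then close the estimate through a one-parameter interpolation between the Ricci curvature bound and the Poincaré inequality. Starting from the pointwise Bochner-Lichnerowicz-Weitzenböck identity $\tfrac12\Delta|\nabla\mathsf p|^2-\nabla\mathsf p\cdot\nabla\Delta\mathsf p=\|\mathrm H\mathsf p\|^2+\mathrm{Ric}_g(\nabla\mathsf p,\nabla\mathsf p)$ and the orthogonal decomposition $\|\mathrm H\mathsf p\|^2=\|\mathrm L\mathsf p\|^2+\tfrac1{d-1}(\Delta\mathsf p)^2$ of the Hessian into its trace-free and trace parts, the definition of $\kk_{\M}[\mathsf p]$ together with $\mathrm{Ric}_g\ge\kappa\,g$ reduces the claim to
\[
\iM{\bigl[\,\|\mathrm L\mathsf p\|^2+\delta\,(\Delta\mathsf p)^2\,\bigr]\mathsf p^{1-n}}\ge(\lambda_\star-\kappa)\iM{|\nabla\mathsf p|^2\,\mathsf p^{1-n}}.
\]

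The key step is a weighted square-completion. Integrating $\int_\M(\Delta\mathsf p)^2\mathsf p^{1-n}$ by parts twice against the weight $\mathsf p^{1-n}$ produces cubic remainders of the form $\int_\M\Delta\mathsf p\,|\nabla\mathsf p|^2\mathsf p^{-n}$ and $\int_\M|\nabla\mathsf p|^4\mathsf p^{-1-n}$, which, when paired with the Hessian, give couplings between $\mathrm L\mathsf p$ and the trace-free tensor $\mathrm M\mathsf p$. For an appropriate (unique) choice $c=c(n,d)$, the pointwise inequality $\|\mathrm L\mathsf p-c\,\mathrm M\mathsf p\|^2\ge0$ absorbs these couplings and reduces the left-hand side to a linear combination of $\int_\M|\nabla\mathsf p|^2\mathsf p^{1-n}$ and, in the spherical case only, a nonnegative quartic remainder $\int_{\S^{d-1}}|\nabla\mathsf p|^4\mathsf p^{-1-n}d\omega$. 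This is the integrated form of Lemma~\ref{Lem:Derivmatrixform1}, specialized to $\M$.

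To close the estimate I would inject the spectral information through a convex combination indexed by $\theta\in[0,1]$: the fraction $\theta$ of $\delta(\Delta\mathsf p)^2$ is absorbed into the Hessian-square block (producing a $\kappa$-contribution with weight $1+\delta\theta\tfrac{d-1}{d-2}$ after a careful accounting of how the Ricci term propagates through the integration by parts), while the complementary fraction $1-\theta$ is controlled by Poincaré, $\int_\M(\Delta\mathsf p)^2\,d\kern1pt v_g\ge\lambda_1^{\M}\int_\M|\nabla\mathsf p|^2\,d\kern1pt v_g$, and contributes $\delta(1-\theta)\lambda_1^{\M}$. This gives a lower bound of $\lambda_\theta-\kappa$ for every admissible $\theta$, and the nonnegativity constraint produced by the residual coefficient in the square-completion of the previous step pins the optimum at $\theta=\theta_\star$ with value $\lambda_\star$. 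On $\S^{d-1}$, the additional positive quartic remainder from the previous step survives; computing its coefficient by the sharp spherical identities for the trace-free Hessian produces exactly $\zeta_\star(n-d)$ with $\zeta_\star>0$, which vanishes only in the limiting case $n=d$ of a trivial weight.

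The main obstacle I expect is the bookkeeping in the second step: identifying the unique $c(n,d)$ for which the weighted square-completion matches the cubic remainders generated by the integrations by parts, and then verifying that the optimization constraint on $\theta$ forces precisely the explicit (and nontrivially signed) value $\theta_\star$ stated in the introduction. The spherical refinement is technically more delicate but structurally parallel: it relies on the conformal rigidity of $\S^{d-1}$ to guarantee that the coefficient of the extra quartic term has the correct positive sign.
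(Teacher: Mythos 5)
Your plan is essentially the paper's proof: Bochner--Lichnerowicz--Weitzenb\"ock plus the trace-free splitting of the Hessian, integrations by parts converting the cubic terms into couplings between $\mathrm L$ and $\mathrm M$, a completed square, and the $\theta$-interpolation between the Ricci bound and the Poincar\'e inequality, with $\theta_\star$ pinned by the discriminant condition $\mathsf b^2=\mathsf a\,\mathsf c$. The only organizational difference is that the paper first substitutes $\mathsf p=f^\beta$ with $\beta=\frac2{3-n}$, which makes the weight $\mathsf p^{1-n}$ disappear so that all subsequent integrations by parts are unweighted; your ``weighted square-completion'' carries the weight along, and your unknown $c(n,d)$ is $\beta-1$ in disguise.

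One step of your ordering would fail as written. You apply $\mathrm{Ric}_g\ge\kappa\,g$ already in the opening reduction (with coefficient $1$), and then plan to absorb the fraction $\theta$ of $\delta\,(\Delta\mathsf p)^2$ via a second use of Bochner, which generates a further Ricci term with coefficient $\delta\,\theta\,\frac{d-1}{d-2}$. Since $\theta_\star<0$, that coefficient is negative, and a pointwise \emph{lower} bound on the Ricci tensor multiplied by a negative coefficient gives an upper bound, not the lower bound you need. The fix, as in the paper, is to keep the Ricci contribution symbolic until both Bochner applications are done and bound it once with the total coefficient $\mathsf a=1+\delta\,\theta\,\frac{d-1}{d-2}$, which is positive for all admissible $\theta$ (this is exactly why the paper checks $\theta_\star>-\frac{(d-2)\,(n-1)}{n-d}$). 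A second, smaller imprecision: the quartic terms $\iM{|\nabla\mathsf p|^4\,\mathsf p^{-1-n}}$ are not special to the sphere; they arise for general $\M$ and are exactly absorbed by the complete square at $\theta=\theta_\star$, whereas on $\S^{d-1}$ one instead takes $\theta=0$ and a signed quartic remainder survives --- that is the origin of the $\zeta_\star\,(n-d)$ term.
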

%------------------------------------------------------------------------------
\begin{proof} The Bochner-Lichnerowicz-Weitzenb\"ock formula
\[
\tfrac12\,\Lap\,(|\nabla\fv|^2)=\|\mathrm H\fv\|^2+\nabla(\Lap\fv)\cdot\nabla\fv+\mathrm{Ric}(\nabla\fv, \nabla\fv)
\]
yields that
\begin{multline*}
\mathcal A:=\iM{\(\tfrac12\,\Delta(|\nabla\mathsf p|^2)-\nabla(\Delta\mathsf p)\cdot\nabla\mathsf p-\tfrac1{n-1}\,(\Delta\mathsf p)^2\)\,\mathsf p^{1-n}}\\
=\iM{ \(\|\mathrm H\mathsf p\|^2+\mathrm{Ric}(\nabla\mathsf p,\nabla\mathsf p)-\tfrac1{n-1}\,(\Delta\mathsf p)^2\)\,\mathsf p^{1-n}}\,.
\end{multline*}
Here $\mathrm{Ric}(\nabla\mathsf p, \nabla\mathsf p)$ is the Ricci curvature tensor contracted with $\nabla\mathsf p \otimes \nabla\mathsf p$.

Set $\mathsf p=f^\beta$, where $\beta=\frac2{3-n}$. A straightforward computation shows that
\[
\mathrm H f^\beta=\beta\,f^{\beta-1}\,\big(\mathrm H f+(\beta-1)\,\mathrm Z f\big)
\]
and hence
\begin{multline*}
\iM{ \(\|\mathrm H\mathsf p\|^2-\tfrac1{n-1}\,(\Delta\mathsf p)^2\)\,\mathsf p^{1-n}}\\
=\beta^2 \iM{ \(\|\mathrm H f+(\beta-1)\,\mathrm Z f\|^2-\tfrac1{n-1}\,\big(\mathrm{Tr}\,(\mathrm H f+(\beta-1)\,\mathrm Z f)\big)^2\)}\\
=\beta^2 \iM{ \(\|\mathrm L f+(\beta-1)\,\mathrm M f\|^2+\delta\,\big(\mathrm{Tr}\,(\mathrm H f+(\beta-1)\,\mathrm Z f)\big)^2\)}\,.
\end{multline*}
Next we observe that
\begin{multline*}
\iM{\big(\mathrm{Tr}\,(\mathrm H f+(\beta-1)\,\mathrm Z f)\big)^2}\\
=\iM{\((\Delta\,f)^2+2\,(\beta-1)\,\Delta\,f\,\frac{|\nabla f|^2}{f}+(\beta-1)^2\,\frac{|\nabla f|^4}{f^2}\)}\,.
\end{multline*}
We recall that $d\ge3$ and hence
\[
\frac{|\nabla f|^4}{f^2}=\|\mathrm Z\fv\|^2=\frac{d-1}{d-2}\,\|\mathrm M\fv\|^2\,.
\]
Using integration by parts, we get that
\begin{multline*}
\iM{\Delta\,f\,\frac{|\nabla f|^2}{f}}=\iM{ \frac{|\nabla f|^4}{f^2} }-2 \iM{\mathrm H f:\mathrm Z f}\\
=\frac{d-1}{d-2}\,\|\mathrm M\fv\|^2-2 \iM{\mathrm L f:\mathrm Z f}-\frac{2}{d-1} \iM{ \Delta\,f\,\frac{|\nabla f|^2}f}\,.
\end{multline*}
This yields
\[
\iM{\Delta\,f\,\frac{|\nabla f|^2}{f}}=\frac{d-1}{d+1}\left[\iM{\frac{d-1}{d-2}\,\|\mathrm M\fv\|^2}-2 \iM{\mathrm L f:\mathrm M f} \right]
\]
by noting that one can replace $\mathrm Z f$ by $\mathrm M f$ because $\mathrm L f$ is trace free.

Using the Bochner-Lichnerowicz-Weitzenb\"ock formula once more
we obtain
\[
\iM{(\Delta\,f)^2}=\frac{d-1}{d-2} \iM{\|\mathrm L f\|^2}+\frac{d-1}{d-2} \iM{ \mathrm{Ric}(\nabla f, \nabla f)}\,.
\]
Hence we find that for any $\theta$,
\begin{multline*}
\iM{\big(\mathrm{Tr}\,(\mathrm H f+(\beta-1)\,\mathrm Z f)\big)^2}-(1-\theta)\iM{(\Delta\,f)^2}\\
=\theta\,\left[\frac{d-1}{d-2} \iM{\|\mathrm L f\|^2}+\frac{d-1}{d-2} \iM{ \mathrm{Ric}(\nabla f, \nabla f)}\right]\hspace*{2cm}\\
+2\,(\beta-1)\,\frac{d-1}{d+1}\left[\iM{\frac{d-1}{d-2}\,\|\mathrm M\fv\|^2}-2 \iM{\mathrm L f:\mathrm M f} \right]\\
+(\beta-1)^2\,\frac{d-1}{d-2}\iM{\|\mathrm M\fv\|^2}\,.
\end{multline*}
Altogether, we get
\begin{multline*}
\mathcal A-\beta^2\delta\,(1-\theta)\iM{(\Delta\,f)^2}=\beta^2\iM{\Big(\mathsf a\,\|\mathrm L f\|^2+\,2\,\mathsf b\,\mathrm L f:\mathrm M f+\,\mathsf c\,\|\mathrm M f\|^2\Big)}\\
+\beta^2\(1+\delta\,\theta\,\frac{d-1}{d-2}\)\iM{ \mathrm{Ric}(\nabla f, \nabla f)}\,,
\end{multline*}
where
\begin{eqnarray*}
&&\mathsf a=1+\delta\,\theta\,\frac{d-1}{d-2}\,,\\
&&\mathsf b=(\beta-1)\,\(1-\,2\,\delta\,\frac{d-1}{d+1}\)\,,\\
&&\mathsf c=(\beta-1)^2\,\(1+\delta\,\frac{d-1}{d-2}\)+2\,(\beta-1)\,\frac{\delta\,(d-1)^2}{(d+1)\,(d-2)}\,.
\end{eqnarray*}
The smallest value of $\theta$ for which $\Big(\mathsf a\,\|\mathrm L f\|^2+\,2\,\mathsf b\,\mathrm L f:\mathrm M f+\,\mathsf c\,\|\mathrm M f\|^2\Big)$ is nonnegative is determined by the condition $\mathsf b^2-\mathsf a\,\mathsf c=0$, that is, $\theta=\theta_\star<0$. Notice that with this choice $\theta>-\frac{(d-2)}{\delta\,(d-1)}=-\,\frac{(d-2)\,(n-1)}{n-d}$ for any $d> 2$ and $n>d$, so that the coefficient $\mathsf a$ is always positive.

The conclusion holds by the Poincar\'e inequality
\[
\iM{(\Delta\,f)^2}\ge\lambda_1^{\M}\iM{|\nabla f|^2}\,.
\]
To bound the term involving the Ricci tensor, we simply use the pointwise estimate
\[
\mathrm{Ric}(\nabla\mathsf p,\nabla\mathsf p)\ge\kappa\,|\nabla\mathsf p|^2
\]
and recall that $\frac{d-1}{d-2}\,\kappa\le\lambda_1^{\M}$, with equality when $\M=\S^{d-1}$. Altogether, the function $\theta\mapsto\lambda_\theta:=\big(1+\delta\,\theta\,\frac{d-1}{d-2}\big)\,\kappa+\delta\,(1-\theta)\,\lambda_1^{\M}$ is constant if $\M=\S^{d-1}$, monotone non-increasing otherwise, and we get that
\[
\mathcal A-\lambda_\theta\,\beta^2\iM{|\nabla f|^2}\ge\mathsf a\,\beta^2\iM{\left\|\mathrm L f+\tfrac{\mathsf b}{\mathsf a}\;\mathrm M f\right\|^2}
-\tfrac{\mathsf b^2-\mathsf a\,\mathsf c}{\mathsf a}\,\beta^2\iM{\|\mathrm M f\|^2}\,.
\]
In the general cas, with $d\ge3$, the conclusion holds with $\theta=\theta_\star$, $\mathsf b^2-\mathsf a\,\mathsf c=0$ and $\lambda_\star=\lambda_{\theta_\star}$. If $\M=\S^{d-1}$ with $d\ge3$, we choose $\theta=0$ and get the conclusion.
\end{proof}

%------------------------------------------------------------------------------
\begin{rem}\label{Rem:lambda} Notice that the constant $\lambda_\star$ is an estimate of the largest constant $\lambda$ such that
\[
\iM {\(\tfrac12\,\Delta(|\nabla\mathsf p|^2)-\nabla(\Delta\mathsf p)\cdot\nabla\mathsf p-\tfrac1{n-1}\,(\Delta\mathsf p)^2-\lambda\,|\nabla\mathsf p|^2\)\,\mathsf p^{1-n}}\ge0\,,
\]
for any positive function $\mathsf p\in C^3(\M)$. It is estimated by $\lambda_\theta$ with $\theta\in[\theta_\star,1]$. In the case of the sphere, that is, $\mathfrak M=\S^{d-1}$, we have that $\frac{d-1}{d-2}\,\kappa=\lambda_1^{\M}$ and $\lambda_\theta=\big(1+\delta\,\frac{d-1}{d-2}\big)\,\kappa=\big(\frac{d-2}{d-1}+\delta\big)\,\lambda_1^{\M}$ is independent of $\theta$. Otherwise, by Lichnerowicz' theorem, we know that $\frac{d-1}{d-2}\,\kappa\le\lambda_1^{\M}$ (with strict inequality if $\M\neq\S^{d-1}$, thanks to Obata's theorem). Hence $\theta\mapsto\lambda_\theta$ is a non-increasing function, and since~$\theta_\star$ is always negative, we have a simple lower bound for $\lambda_\star$:
\[
\lambda_\star\ge\lambda_0=\kappa+\delta\,\lambda_1^{\M}\,.
\]
As in~\cite{MR3229793}, a better, nonlocal, estimate is obtained by refining $\lambda_\star$ as
\[
\lambda_\star:=\kern-4pt\inf_{\begin{array}{c}f\in C^3(\M)\\\mbox{s.t.}\,\nabla f\not\equiv0\end{array}}\kern-6pt\frac{\delta\,(1-\theta)\,\iM{(\Delta\,f)^2}+\(1+\delta\,\theta\,\frac{d-1}{d-2}\)\iM{\mathrm{Ric}_g(\nabla f,\nabla f)}}{\iM{|\nabla f|^2}}\,.
\]
\end{rem}
%------------------------------------------------------------------------------

%------------------------------------------------------------------------------
\begin{rem}\label{Rem:abc} With $\theta=\theta_\star$, the constants $\mathsf a$, $\mathsf b$ and $\mathsf c$ are explicit and given by
\[
\mathsf a=\frac{(d-1)\,(d-2)\,(n+1)^2}{(d+1)\,\big[d\,(n^2-n-4)-(n^2-3\,n-2)\big]}\,,\quad\mathsf b=-\frac{(n+1)\,(d-1)}{(n-3)\,(d+1)}\,,
\]
\[
\mathsf c=\frac{(d-1)\,\big[d\,(n^2-n-4)-(n^2-3\,n-2)\big]}{(d-2)\,(d+1)\,(n-3)^2}\,.
\]
If $\M=\S^{d-1}$, we have $\lambda_1^{\M}=d-1$, $\kappa=d-2$ and
\[
\lambda_\star=\frac{n-2}{n-1}\,(d-1)\,,\quad\zeta_\star=-\frac{(d-1)\,\big(d\,(3\,n+5)-3\,n-1\big)}{(d-2)\,(d+1)^2\,(n-3)^2}\,.
\]\end{rem}
%------------------------------------------------------------------------------

%%%%%%%%%%%%%%%%%%%%%%%%%%%%%%%%%%%%%%%%%%%%%%%%%%%%%%%%%%%%%%%%%%%%%%%%%%%%%%%
\subsection{A Poincar\'e inequality if \texorpdfstring{$d=2$}{d=2}}

The manifold $\M$ is one-dimensional if $d=2$, \emph{i.e.}, it is a smooth closed curve with curvilinear coordinate~$\omega$, of length $1$. A direct computation shows that
\[
\kk_{\M}[\mathsf p]=\frac{n-2}{n-1}\,|\Delta\mathsf p|^2-(n-2)\,\alpha^2\,|\nabla\mathsf p|^2\,.
\]
%------------------------------------------------------------------------------
\begin{lem}\label{Lem:Poincare}
Assume that $\M$ is a smooth closed curve. If $\mathsf p$ is a positive function of class~$C^2(\M)$, then
\begin{multline*}
\iM{\kk_{\M}[\mathsf p]\,\mathsf p^{1-n}}\ge\(\frac{n-2}{n-1}\,\lambda_1^{\M}-(n-2)\,\alpha^2\)\iM{|\nabla\mathsf p|^2\,\mathsf p^{1-n}}\\
+\frac1{12}\,(n+3)\,(n-2)\iM{\frac{|\nabla\mathsf p|^4}{\mathsf p^2}\,\mathsf p^{1-n}}\,.
\end{multline*}\end{lem}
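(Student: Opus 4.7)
The plan is to exploit the explicit one-dimensional formula
\[
\kk_{\M}[\mathsf p]=\tfrac{n-2}{n-1}\,(\mathsf p'')^2-(n-2)\,\alpha^2\,(\mathsf p')^2
\]
that is given just before the lemma (writing $'$ for the derivative with respect to arclength). Dividing out the common factor $\tfrac{n-2}{n-1}$, the claim reduces to the scalar weighted inequality
\[
\iM{(\mathsf p'')^2\,\mathsf p^{1-n}}\ge\lambda_1^{\M}\iM{(\mathsf p')^2\,\mathsf p^{1-n}}+\tfrac{(n-1)(n+3)}{12}\iM{\tfrac{(\mathsf p')^4}{\mathsf p^{n+1}}}\,.
\]

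To prove this I would mimic the change of unknown used in Lemma~\ref{Lem:PoincareGen} and set $\mathsf p=f^\beta$ with $\beta=\frac{2}{3-n}$, the exponent being chosen so that the powers of $f$ arising from $(\mathsf p')^2\,\mathsf p^{1-n}$ and $(\mathsf p'')^2\,\mathsf p^{1-n}$ cancel exactly. A short direct computation then gives
\[
\iM{(\mathsf p')^2\,\mathsf p^{1-n}}=\beta^2\iM{(f')^2}\,,\quad\iM{(\mathsf p'')^2\,\mathsf p^{1-n}}=\beta^2\iM{\left(f''+(\beta-1)\,\tfrac{(f')^2}f\right)^2}\,.
\]
Expanding the square produces a cross-term proportional to $\iM{f''(f')^2/f}$. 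On the closed curve $\M$, a single integration by parts yields
\[
\iM{f''\,\tfrac{(f')^2}f}=\tfrac13\iM{\tfrac{(f')^4}{f^2}}\,,
\]
so that, after collecting terms,
\[
\iM{(\mathsf p'')^2\,\mathsf p^{1-n}}=\beta^2\iM{(f'')^2}+\tfrac{\beta^2\,(\beta-1)(3\beta-1)}{3}\iM{\tfrac{(f')^4}{f^2}}\,.
\]

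The proof would then conclude in one stroke by applying the Poincar\'e inequality $\iM{(f'')^2}\ge\lambda_1^{\M}\iM{(f')^2}$ to the first term and translating the quartic term back via $\iM{(f')^4/f^2}=\beta^{-4}\iM{(\mathsf p')^4/\mathsf p^{n+1}}$. Using $\beta=2/(3-n)$ one checks the elementary identities $(\beta-1)(3\beta-1)/3=(n-1)(n+3)/[3\,(n-3)^2]$ and $\beta^{-2}=(n-3)^2/4$, so the coefficient of the quartic term is exactly $(n-1)(n+3)/12$; reinstating the common factor $\tfrac{n-2}{n-1}$ recovers the claimed constant $\tfrac1{12}\,(n+3)(n-2)$.

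I do not anticipate any substantive obstacle here: the argument is an exact computation with no Cauchy--Schwarz slack. The main thing to verify carefully is the one-dimensional integration-by-parts identity $\iM{f''(f')^2/f}=\tfrac13\iM{(f')^4/f^2}$, which plays here the role that the Bochner--Lichnerowicz--Weitzenb\"ock formula and the trace-free Hessian estimate played in the $d\ge3$ case of Lemma~\ref{Lem:PoincareGen}. Its remarkable feature is that it produces exactly the constant required by the statement; no additional room is available for further improvement, which is consistent with the structural analogy to the higher-dimensional case where the coefficient also arises as the equality case of a square of a tensor.
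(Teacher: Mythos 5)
Your argument is correct and is essentially the paper's own proof in different notation: the paper expands $\big(\nabla\cdot(\mathsf p^{\frac{1-n}2}\nabla\mathsf p)\big)^2$, integrates the cross term by parts via $\iM{\Delta\mathsf p\,\tfrac{|\nabla\mathsf p|^2}{\mathsf p}\,\mathsf p^{1-n}}=\tfrac n3\iM{\tfrac{|\nabla\mathsf p|^4}{\mathsf p^2}\,\mathsf p^{1-n}}$ and applies the Poincar\'e inequality, and since $\mathsf p^{\frac{1-n}2}\nabla\mathsf p=\beta\,\nabla f$ with your $f=\mathsf p^{1/\beta}$, $\beta=\tfrac2{3-n}$, your substitution, your identity $\iM{f''\,(f')^2/f}=\tfrac13\iM{(f')^4/f^2}$ and your use of Poincar\'e for the mean-zero function $f'$ reproduce exactly the same computation and the same constant $\tfrac1{12}(n+3)(n-1)$ before multiplying by $\tfrac{n-2}{n-1}$.
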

%------------------------------------------------------------------------------
\begin{proof}
Since
\[
\nabla\cdot\(\mathsf p^\frac{1-n}2\,\nabla\mathsf p\)=\mathsf p^\frac{1-n}2\(\Delta\mathsf p+\frac{1-n}2\,\frac{|\nabla\mathsf p|^2}{\mathsf p}\)\,,
\]
we may take the square, integrate by parts the cross term and use the Poincar\'e inequality
\[
\iM{\left|\nabla\cdot\(\mathsf p^\frac{1-n}2\,\nabla\mathsf p\)\right|^2}\ge\lambda_1^{\M}\iM{\left|\mathsf p^\frac{1-n}2\,\nabla\mathsf p\right|^2}\,.
\]
Notice that
\[
\iM{\Delta\mathsf p\,\frac{|\nabla\mathsf p|^2}{\mathsf p}\,\mathsf p^{1-n}}=\frac 13\iM{\nabla\cdot\(|\nabla\mathsf p|^2\,\nabla\mathsf p\)\,\mathsf p^{-n}}=\frac n3\iM{\frac{|\nabla\mathsf p|^4}{\mathsf p^2}\,\mathsf p^{1-n}}
\]
and $-\,\frac1{12}\,(n+3)\,(n-1)=\(\frac{1-n}2\)^2+2\,\frac{1-n}2\,\frac n3$. Hence we get
\[\label{Pincd2}
\iM{|\Delta\mathsf p|^2\,\mathsf p^{1-n}}\ge\lambda_1^{\M}\iM{|\nabla\mathsf p|^2\,\mathsf p^{1-n}}+\frac1{12}\,(n+3)\,(n-1)\iM{\frac{|\nabla\mathsf p|^4}{\mathsf p^2}\,\mathsf p^{1-n}}\,.
\]
The conclusion immediately follows.\end{proof}

%%%%%%%%%%%%%%%%%%%%%%%%%%%%%%%%%%%%%%%%%%%%%%%%%%%%%%%%%%%%%%%%%%%%%%%%%%%%%%%
\subsection{Consequences for \texorpdfstring{$\mathcal K[\mathsf p]$}{K[p]} and some remarks} With $\Q[\mathsf p]$ defined by~\eqref{QK}, we recall that $\mathcal K[\mathsf p]=\irdmu{\(\Q[\mathsf p]-\frac1n\,(\L\mathsf p)^2\)\mathsf p^{1-n}}$. The following result is a direct consequence of Lemmas~\ref{Lem:Derivmatrixform1}, \ref{Lem:PoincareGen} and~\ref{Lem:Poincare}.
%------------------------------------------------------------------------------
\begin{cor}\label{Cor:Derivmatrixform2} Assume that $d\in\N$, $n\in\R$ and $n>d\ge2$ and consider a function $\mathsf p\in C^3((0,\infty)\times\M)$. Then we have
\begin{multline}\label{Ineq:K}
\mathcal K[\mathsf p]\ge\(1-\frac1n\)\alpha^4\irdmugen{\left|\mathsf p''-\frac{\mathsf p'}r-\frac{\Delta\,\mathsf p}{\alpha^2\,(n-1)\,r^2}\right|^2\,\mathsf p^{1-n}}\\
+2\,\alpha^2\,\irdmugen{\frac1{r^2}\left|\nabla\mathsf p'-\frac{\nabla\mathsf p}r\right|^2\,\mathsf p^{1-n}}\\
+\big[\lambda_\star-(n-2)\,\alpha^2\big]\irdmugen{\frac1{r^4}\,|\nabla\mathsf p|^2\,\mathsf p^{1-n}}\,.
\end{multline}
\end{cor}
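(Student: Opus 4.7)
The statement should follow directly by combining the pointwise identity of Lemma~\ref{Lem:Derivmatrixform1} with the Poincar\'e-type estimates of Lemma~\ref{Lem:PoincareGen} (for $d\ge3$) and Lemma~\ref{Lem:Poincare} (for $d=2$). My plan is as follows.

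First, I would recall that by definition
\[
\mathcal K[\mathsf p]=\irdmugen{\kk[\mathsf p]\,\mathsf p^{1-n}}\,,\qquad\kk[\mathsf p]=\Q[\mathsf p]-\tfrac1n\,(\L\mathsf p)^2\,,
\]
and substitute the pointwise identity
\[
\kk[\mathsf p]=\alpha^4\(1-\tfrac1n\)\!\left[\mathsf p''-\tfrac{\mathsf p'}r-\tfrac{\Delta\,\mathsf p}{\alpha^2\,(n-1)\,r^2}\right]^2+2\,\alpha^2\,\tfrac1{r^2}\left|\nabla\mathsf p'-\tfrac{\nabla\mathsf p}r\right|^2+\tfrac1{r^4}\,\kk_\M[\mathsf p]
\]
furnished by Lemma~\ref{Lem:Derivmatrixform1}. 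Multiplying by $\mathsf p^{1-n}$ and integrating against $d\mu=r^{n-1}\,dr\,dv_g$ on $(0,\infty)\times\M$ yields three summands; the first two of them are already exactly the first two terms on the right-hand side of~\eqref{Ineq:K}.

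For the third term, I would use Fubini's theorem to rewrite it as
\[
\int_0^\infty\frac{r^{n-1}}{r^4}\(\iM{\kk_\M[\mathsf p(r,\cdot)]\,\mathsf p(r,\cdot)^{1-n}}\)dr\,,
\]
and then apply the corresponding manifold Poincar\'e-type estimate pointwise in $r$. If $d\ge3$, Lemma~\ref{Lem:PoincareGen} gives the inner integral a lower bound by $\big[\lambda_\star-(n-2)\,\alpha^2\big]\iM{|\nabla\mathsf p|^2\,\mathsf p^{1-n}}$; if $d=2$, Lemma~\ref{Lem:Poincare} provides the same bound with $\lambda_\star=\frac{n-2}{n-1}\lambda_1^\M$ (the value corresponding to $\theta_\star=0$ and $\kappa=0$ for a one-dimensional manifold), together with an \emph{additional} nonnegative term involving $|\nabla\mathsf p|^4/\mathsf p^2$ which may simply be dropped. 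Reinserting the $r^{n-5}$ weight and $dr$ yields the third term of~\eqref{Ineq:K}, and the three contributions combine to give the stated inequality.

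There is no substantial obstacle: once the three lemmas are in hand, the only things to check are that the relevant integrations in $\omega$ at fixed $r$ make sense (which is ensured by the regularity $\mathsf p\in C^3((0,\infty)\times\M)$), that the measure decomposition $d\mu=r^{n-1}\,dr\,dv_g$ is applied correctly to bring out the $1/r^4$ factor, and that the ``$d=2$'' case is handled with the appropriate (smaller) value of $\lambda_\star$. The positivity of the additional $|\nabla\mathsf p|^4$ term in Lemma~\ref{Lem:Poincare} is used only to justify discarding it, so a single unified inequality~\eqref{Ineq:K} is obtained in both cases.
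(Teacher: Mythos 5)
Your proposal is correct and is precisely the argument the paper intends: the corollary is stated there as a direct consequence of Lemmas~\ref{Lem:Derivmatrixform1}, \ref{Lem:PoincareGen} and~\ref{Lem:Poincare}, obtained by integrating the pointwise identity against $\mathsf p^{1-n}\,d\mu$ and applying the manifold estimates slicewise in $r$. Your handling of the $d=2$ case (identifying $\lambda_\star=\tfrac{n-2}{n-1}\,\lambda_1^{\M}$ from $\theta_\star=0$, $\kappa=0$, and discarding the nonnegative $|\nabla\mathsf p|^4$ term) is exactly right.
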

%------------------------------------------------------------------------------
\begin{rem}\label{Rem:AdditionalTerms} When $\M=\S^{d-1}$ with $d\ge2$, $\lambda_\star-(n-2)\,\alpha^2=(n-2)\(\alpha_{\rm FS}^2-\alpha^2\)$ with~\hbox{$\alpha_{\rm FS}^2=\frac{d-1}{n-1}$}. The difference of the two terms in~\eqref{Ineq:K} involves an additional term equal to $\zeta_\star\,(n-d)\irdmugen{|\nabla\mathsf p|^4\,\mathsf p^{1-n}}$ if $d\ge3$, where the expression of~$\zeta_\star$ can be found in Remark~\ref{Rem:abc}, but one can choose $\mathsf a=1$ while $\mathsf b$ is unchanged. If $d=2$, the difference of the two terms in~\eqref{Ineq:K} is bounded from below by
\[
\frac1{12}\,(n+3)\,(n-1)\irdmugen{\frac{|\nabla\mathsf p|^4}{\mathsf p^2}\,\mathsf p^{1-n}}\,.
\]\end{rem}
%------------------------------------------------------------------------------
\begin{cor}\label{Cor:DerivFisherSign} Assume that $n> d\ge2$ and that $(n-2)\,\alpha^2\le\lambda_\star$. Then, for any function $\mathsf p\in C^3((0,\infty)\times\M)$, $\mathcal K[\mathsf p]\ge0$ and $\mathcal K[\mathsf p]=0$ if and only if $u=u_\star(t,\cdot)$ for some $t>0$.\end{cor}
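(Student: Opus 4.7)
The non-negativity $\mathcal K[\mathsf p]\ge 0$ is essentially immediate from Corollary~\ref{Cor:Derivmatrixform2}: under the hypothesis $(n-2)\alpha^2\le\lambda_\star$, the three coefficients in~\eqref{Ineq:K}, namely $(1-1/n)\,\alpha^4$, $2\alpha^2$, and $\lambda_\star-(n-2)\alpha^2$, are all non-negative (using $n>d\ge 2$), and each integrand is a manifest square or $|\cdot|^2$ of a one-form, so termwise non-negativity of the right-hand side of~\eqref{Ineq:K} gives $\mathcal K[\mathsf p]\ge 0$.

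For the forward direction of the equality case, assume $\mathcal K[\mathsf p]=0$ and $\alpha>0$. Each of the three terms on the right of~\eqref{Ineq:K} must vanish separately. First I would exploit the second term: $\partial_r(\nabla_\omega\mathsf p)=\nabla_\omega\mathsf p/r$ integrates (in $r$) to $\nabla_\omega\mathsf p(r,\omega)=r\,\nabla_\omega\phi(\omega)$ with $\phi(\omega)=\mathsf p(1,\omega)$, yielding the structural decomposition $\mathsf p(r,\omega)=r\,\phi(\omega)+\psi(r)$. Next, the vanishing of the third term forces $\nabla_\omega\mathsf p\equiv 0$ as soon as $(n-2)\alpha^2<\lambda_\star$, so $\phi$ is constant and $\mathsf p=\psi(r)$ depends only on $r$. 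With $\mathsf p$ radial, $\Delta\mathsf p=0$ and the first term collapses to $(\psi''-\psi'/r)^2=0$, whose solutions are $\psi(r)=A+C\,r^2/2$. Positivity of $\mathsf p=(n-1)u^{-1/n}$ and the integrability of $u$ against $d\mu$ force $A,C>0$, and one identifies the resulting Barenblatt pressure with $\mathsf p_\star(t,\cdot)$ for a suitable $t>0$.

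The converse is a direct verification: for $\mathsf p=\mathsf p_\star(t,\cdot)$, the profile $\mathsf p_\star(t,r)$ is independent of $\omega$ and quadratic in $r$ with $\mathsf p_\star''=\mathsf p_\star'/r=1/(\alpha^2 t)$, so every integrand in~\eqref{Ineq:K} vanishes pointwise and $\mathcal K[\mathsf p_\star(t,\cdot)]=0$.

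\textbf{Main obstacle.} The delicate step is the boundary case $(n-2)\alpha^2=\lambda_\star$ on a general manifold $\M$, where the third term in~\eqref{Ineq:K} vanishes identically and no longer directly pins down $\nabla_\omega\mathsf p$. To handle it I would retrace the proof of Lemma~\ref{Lem:PoincareGen} with $f=\mathsf p^{1/\beta}$: equality throughout forces simultaneously $\mathrm L f+(\mathsf b/\mathsf a)\,\mathrm M f\equiv 0$ pointwise, equality in the Poincar\'e inequality for $\Delta f$, and saturation of the pointwise Ricci bound $\mathrm{Ric}_g(\nabla f,\nabla f)=\kappa|\nabla f|^2$; combined with the already-established form $\mathsf p(r,\omega)=r\,\phi(\omega)+\psi(r)$, these rigidity conditions should force $\nabla_\omega\phi\equiv 0$. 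On $\M=\S^{d-1}$ the difficulty disappears thanks to the strictly positive supplementary term $\zeta_\star(n-d)\,|\nabla\mathsf p|^4\,\mathsf p^{1-n}$ pointed out in Remark~\ref{Rem:AdditionalTerms} (or its $d=2$ counterpart), which forces $\nabla_\omega\mathsf p\equiv 0$ unconditionally and makes the rigidity argument entirely self-contained.
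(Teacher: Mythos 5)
Your treatment of the non-negativity, of the case $\M=\S^{d-1}$, and of the generic case $(n-2)\,\alpha^2<\lambda_\star$ agrees with the paper's (termwise vanishing of the right-hand side of~\eqref{Ineq:K}, then $\mathsf p''=\mathsf p'/r$ for a radial $\mathsf p$, hence $\mathsf p=a+b\,r^2$). The genuine gap is precisely the step you flag as the ``main obstacle'': the borderline case $(n-2)\,\alpha^2=\lambda_\star$ on a general manifold, which is the case one actually needs for Theorem~\ref{Thm:LV}. Your plan to extract rigidity from the equality case of Lemma~\ref{Lem:PoincareGen} is left at the level of ``should force $\nabla_\omega\phi\equiv0$'', and it is not clear it can be closed as stated: equality there gives $\mathrm L f=-\tfrac{\mathsf b}{\mathsf a}\,\mathrm M f$, equality in the Poincar\'e inequality and pointwise saturation of the Ricci bound, but $\mathrm L f\propto\mathrm M f$ is not the condition $\mathrm H f\propto g$ that an Obata-type conclusion requires, so a further argument would be needed. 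The paper avoids the third term of~\eqref{Ineq:K} altogether in this case: it keeps only the first two terms, expands $\mathsf p=\sum_{k\ge0}\alpha_k\,\mathsf p_k(r)\,y_k(\omega)$ in eigenfunctions of $-\Delta$ on $\M$, and observes that their simultaneous vanishing forces, for each $k\ge1$, both $\mathsf p_k''-\mathsf p_k'/r+\lambda^\M_k\,\mathsf p_k/(\alpha^2(n-1)\,r^2)=0$ and $\mathsf p_k'=\mathsf p_k/r$, which are compatible only if $\lambda^\M_k=(n-1)\,\alpha^2$. For $\M\neq\S^{d-1}$ the \emph{strict} Lichnerowicz--Obata inequality $\tfrac{d-1}{d-2}\,\kappa<\lambda_1^{\M}$ yields $\lambda_\star<\tfrac{n-2}{n-1}\,\lambda_1^{\M}$, hence $(n-1)\,\alpha^2<\lambda_1^{\M}\le\lambda^\M_k$, so every mode $k\ge1$ must vanish and $\mathsf p=\mathsf p_0(r)$.

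In fact your own structural step already contains this argument in disguise, and that is the cheapest way to repair the proof. Inserting $\mathsf p(r,\omega)=r\,\phi(\omega)+\psi(r)$ (obtained from the second term) into the vanishing first term and separating variables gives $\psi''-\psi'/r=\tfrac1r\big(\phi+\tfrac{\Delta\phi}{\alpha^2(n-1)}\big)$, whence $-\,\Delta\phi=(n-1)\,\alpha^2\,(\phi-c)$ for some constant $c$; the only missing ingredient is the spectral gap estimate $(n-1)\,\alpha^2<\lambda_1^{\M}$, which is exactly what Lichnerowicz--Obata supplies when $\M\neq\S^{d-1}$, and which forces $\phi\equiv c$. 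One last small point: for the converse you argue that the integrands on the right of~\eqref{Ineq:K} vanish, but~\eqref{Ineq:K} is only a lower bound, so this does not by itself give $\mathcal K[\mathsf p_\star]=0$; for radial $\mathsf p$ one should instead use the exact identity of Lemma~\ref{Lem:Derivmatrixform1}, in which $\kk_{\M}[\mathsf p]=0$ and only the square $(\mathsf p''-\mathsf p'/r)^2$ survives.
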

%------------------------------------------------------------------------------
\begin{proof} Let us deal first with the case $\M=\S^{d-1}$. In this case the condition $(n-2)\,\alpha^2\le\lambda_\star$ is equivalent to $\alpha\le \alpha_{\rm FS}$. By Lemma~\ref{Lem:PoincareGen}, $\mathsf p$ is only a function of $r$. Moreover, since $\mathcal K[\mathsf p]=0$ we find that $\mathsf p'' = \mathsf p'/r$ for all $r$, which implies that $\mathsf p(r)= a+b\, r^2$ for some constants $a$ and $b$.

Let us now address the case of $\M \not= \S^{d-1}$, which is more delicate. If \hbox{$(n-2)\,\alpha^2\le\lambda_\star$}, the inequality $\mathcal K[\mathsf p]\ge0$ follows from Corollary~\ref{Cor:Derivmatrixform2}. Moreover, under the same assumption,
\begin{multline*}
\mathcal K[\mathsf p]
\ge \(1-\frac1n\)\alpha^4\irdmugen{\left|\mathsf p''-\frac{\mathsf p'}r-\frac{\Delta\,\mathsf p}{\alpha^2\,(n-1)\,r^2}\right|^2\,\mathsf p^{1-n}}\\
+2\,\alpha^2\,\irdmugen{\frac1{r^2}\left|\nabla\mathsf p'-\frac{\nabla\mathsf p}r\right|^2\,\mathsf p^{1-n}}\,.
\end{multline*}
We write
\[
\mathsf p(r,\omega)=\sum_{k\ge0}\alpha_k\,\mathsf p_k(r)\,y_k(\omega)
\]
where $(y_k)_{k\ge0}$ is a basis of eigenfunctions associated with $-\Delta$ and $(\lambda^\M_k)_{k\ge0}$ denotes the corresponding sequence of eigenvalues. Notice that $\lambda^\M_0=0$ and $\lambda^\M_k>0$ for any $k\ge1$. When $\mathcal K[\mathsf p]=0$, then
\begin{multline*}
0=\alpha^4\left(1-\frac1n\right)\sum_{k\ge0}\alpha_k^2\left[\mathsf p_k''-\frac{\mathsf p_k'}r+\frac{\lambda^\M_k\,\mathsf p_k}{\alpha^2\,(n-1)\,r^2}\right]^2\\
+2\,\alpha^2\,\frac1{r^2}\sum_{k\ge0}\alpha_k^2\,\lambda^\M_k\left[\mathsf p_k'-\frac{\mathsf p_k}r\right]^2\,.
\end{multline*}
All the terms in the r.h.s.~are nonnegative, which means that we have to solve simultaneously
\be{eqpk1}
\mathsf p_k''-\frac{\mathsf p_k'}r+\frac{\lambda^\M_k\,\mathsf p_k}{\alpha^2\,(n-1)\,r^2}=0
\ee
for any $k\ge0$ and
\be{eqpk2}
\mathsf p_k'-\frac{\mathsf p_k}r=0
\ee
for any $k\ge1$. The first equation shows that, up to multiplication by an arbitrary non-zero constant,
\[
\mathsf p_k^\pm(r)=r^{\beta_k^\pm}\quad\mbox{with}\quad\beta_k^\pm=1\pm\sqrt{1-\frac{\lambda^\M_k}{\alpha^2\,(n-1)}}\,.
\]
 For $k \ge 1$, equations \eqref{eqpk1} and \eqref{eqpk2} are only compatible if at least one of $\beta_k^\pm=1$, which entails that $\lambda_k^\M = (n-1) \,\alpha^2$. We shall prove that this is never the case for \hbox{$k\ge 1$}. Because $\M \not= \S^{d-1}$ we can use Lichnerowicz' and Obata's theorems to conclude that the strict inequality $\frac{d-1}{d-2}\,\kappa<\lambda_1^{\M}$ holds. This implies that \hbox{$\lambda_*< \frac{n-2}{n-1}\, \lambda^\M_1$}, and hence that $\alpha^2\,(n-1)< \lambda^\M_1$. Altogether, $\mathsf p=\mathsf p_0$ has to be radially symmetric and given by $\mathsf p(r)=a\,+b\,r^2$, for some positive constants $a$ and~$b$. This concludes the proof.\end{proof}
%------------------------------------------------------------------------------
\begin{rem} If $\,\M=\S^{d-1}$, the case $n=d\ge3$ and $\alpha=\alpha_{\,\rm FS}$ corresponds to Sobolev's inequality and the condition $(n-2)\,\alpha^2\le\lambda_\star$ is equivalent to $\alpha^2\le\alpha^2_{\,\rm FS}=\frac{d-1}{n-1}=1$. Our results do not apply to this case, because of course it is well known that there is no rigidity in this case.\end{rem}
%------------------------------------------------------------------------------

%%%%%%%%%%%%%%%%%%%%%%%%%%%%%%%%%%%%%%%%%%%%%%%%%%%%%%%%%%%%%%%%%%%%%%%%%%%%%%%
%%%%%%%%%%%%%%%%%%%%%%%%%%%%%%%%%%%%%%%%%%%%%%%%%%%%%%%%%%%%%%%%%%%%%%%%%%%%%%%
\section{Proof of the main results}\label{Sec:Proofs}

Assume that $p\in(2,2^*)$ and consider an optimal function for the Caffarelli-Kohn-Nirenberg inequalities~\eqref{CKN}. Such a solution exists according to \cite{Catrina-Wang-01}. Up to a multiplication by a constant, it solves~\eqref{EL}. Hence Theorem~\ref{Thm:Main1} can be considered as a special case of Theorem~\ref{Thm:Rigidity}. Similarly, we can consider the interpolation inequality~\eqref{ineqcylindergeneral}. For the same reasons as in~\cite{Catrina-Wang-01}, an optimal function exists, which solves~\eqref{Eqn:Cylinder} and the upper bound in Corollary~\ref{Cor:CylinderInequality}, that is, $\Lambda_\star\le\Lambda_{\,\rm FS}=\frac{4\,\lambda_1^{\M}}{p^2-4}$ follows from Proposition~\ref{Prop:FS}.

Corollary~\ref{Cor:CylinderConstant} is equivalent to Theorems~\ref{Thm:Main1} and~\ref{Thm:Rigidity}. The proof of equivalence relies on the Emden-Fowler change of variables~\eqref{EF}. Details are left to the reader. Moreover, it is clear that Corollary~\ref{Cor:CylinderConstant} is a special case of Theorem~\ref{Thm:LV} and Corollary~\ref{Cor:CylinderInequality}, which we prove next.

\medskip Take any positive solution $\varphi\in\mathrm H^1(\mathcal C)$ to~\eqref{Eqn:Cylinder} and recall that by undoing the Emden-Fowler transformation \eqref{EF}, the function $\mathsf p$ defined in \eqref{up} can be written~as
\be{pvarphi}
\mathsf p(r,\omega)=(n-1)\,u^{-\frac1n}(r,\omega)=\tfrac{p+2}{p-2}\,r\,\big(\varphi(-\tfrac{\log r}\alpha,\omega)\big)^{-\frac{p-2}2}\quad\forall\,(r,\omega)\in(0,\infty)\times\M\,,
\ee
with $\alpha=\frac{p-2}2\,\sqrt\Lambda$ and $n=\frac{2\,p}{p-2}$, and it satisfies the equation
\be{eqnp}
\mathsf p\,\mathcal L\,\mathsf p-\frac n2\,|\D\mathsf p|^2=\frac{2\,(n-1)^2}{n-2}\quad\mbox{in}\quad(0,\infty)\times\M\,.
\ee
%------------------------------------------------------------------------------
\begin{lem}\label{Lem:ll1} Let $\alpha\le \alpha_{\rm FS}$. For any positive solution $\mathsf p $ of \eqref{eqnp}, corresponding to $\varphi\in\mathrm H^1(\mathcal C)$,
\[
\irdmugen{\left(\mathsf p\,\mathcal L\,\mathsf p-\frac n2\,|\D\mathsf p|^2-\frac{2\,(n-1)^2}{n-2}\right)(\mathcal L\,u^m)}=-\,n\,(n-1)^{n-1}\,\mathcal K[\mathsf p]\,.
\]\end{lem}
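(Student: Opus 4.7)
My plan is to prove Lemma~\ref{Lem:ll1} as the rigorous counterpart of the formal flow-monotonicity computation of Lemma~\ref{Lem:DerivFisher}. By~\eqref{eqnp} the integrand on the left-hand side vanishes pointwise, so the substantive content of the lemma is that, after integration by parts, this (trivially zero) integral equals $-\,n\,(n-1)^{n-1}\,\mathcal K[\mathsf p]$; combined with $\mathcal K[\mathsf p]\ge 0$ from Corollary~\ref{Cor:DerivFisherSign}, this will force $\mathcal K[\mathsf p]=0$ and ultimately yield the rigidity statements.

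I would first dispose of the constant piece: $-\tfrac{2\,(n-1)^2}{n-2}\int\L u^m\,d\mu=0$ by a single integration by parts, using that $u^m=(n-1)^{n-1}\,\mathsf p^{1-n}$ has enough decay at both ends $r\to 0^+$ and $r\to+\infty$. It then remains to establish
\[
\int\Bigl[\mathsf p\,\L\mathsf p-\tfrac n2\,|\D\mathsf p|^2\Bigr]\,\L u^m\,d\mu=-\,n\,(n-1)^{n-1}\,\mathcal K[\mathsf p].
\]
The bracket here is, up to the factor $\tfrac n2$, the variational derivative of the Fisher information. Indeed, writing $\mathsf p=(n-1)\,u^{-1/n}$ and hence $\delta\mathsf p=-\tfrac{\mathsf p}{n\,u}\,\delta u$, a direct first-variation calculation of $\mathcal I[u]=\int u\,|\D\mathsf p|^2\,d\mu$ (in which the cross term is integrated by parts twice with help of $u^{-2/n-1}\,\D u=-\tfrac n2\,\D u^{-2/n}$ and $u^{-2/n}=\mathsf p^2/(n-1)^2$, and one then uses $\L\mathsf p^2=2\,\mathsf p\,\L\mathsf p+2\,|\D\mathsf p|^2$) produces
\[
\frac{\delta\mathcal I}{\delta u}=\frac{2}{n}\,\Bigl[\mathsf p\,\L\mathsf p-\tfrac n2\,|\D\mathsf p|^2\Bigr].
\]
Identifying $\partial_t u$ with $\L u^m$ along the fast diffusion flow, the claimed equality is exactly the rewriting of $\tfrac d{dt}\mathcal I[u(t,\cdot)]=-\,2\,(n-1)^{n-1}\,\mathcal K[\mathsf p(t,\cdot)]$ from Lemma~\ref{Lem:DerivFisher}; in practice I would simply replicate the chain of integrations by parts performed there, applied directly to the stationary expression on the left-hand side, no evolution being needed.

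The main obstacle is analytic rather than algebraic: every intermediate boundary term must actually vanish, and in particular the function $\mathsf b(r)$ defined by \eqref{b} must satisfy $\mathsf b(r)\to 0$ as $r\to 0^+$ and as $r\to+\infty$. Since $\varphi\in\mathrm H^1(\mathcal C)$ is a positive solution of the subcritical equation \eqref{Eqn:Cylinder}, elliptic bootstrap makes $\varphi$ smooth, and standard energy/comparison arguments give exponential decay of $\varphi$ and its first derivatives as $|s|\to\infty$. Under the change of variables \eqref{pvarphi}, which sends $s\to+\infty$ and $s\to-\infty$ to $r\to 0^+$ and $r\to+\infty$ respectively, these rates are more than sufficient to dominate the integrands $r^{n-1}\,\mathsf p^{1-n}\,|\D\mathsf p|^2$ and $r^{n-1}\,\mathsf p^{1-n}\,\mathsf p'\,\L\mathsf p$ that make up $\mathsf b(r)$, and to kill all intermediate boundary terms in the chain of integrations by parts. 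The $\mathrm H^1(\mathcal C)$ hypothesis enters precisely at this point; the algebraic content of the identity is nothing but the calculation already performed in the proof of Lemma~\ref{Lem:DerivFisher}.
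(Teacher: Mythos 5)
Your overall strategy is the paper's own: carry out the chain of integrations by parts of Lemma~\ref{Lem:DerivFisher} on the truncated domain $(r,R)\times\M$, applied to the stationary expression rather than to the time derivative along the flow, and then show that the boundary contributions vanish as $r\to0_+$ and $R\to+\infty$ using the regularity and decay of $\varphi$. The algebraic part of your plan is correct, and your observation that the integrand on the left vanishes pointwise by \eqref{eqnp}, so that the lemma forces $\mathcal K[\mathsf p]=0$, is exactly how the identity is used afterwards.

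The gap is in the boundary analysis at $r\to0_+$, which you dismiss as routine (``these rates are more than sufficient''). At $r\to+\infty$ the pointwise exponential decay of $\varphi$ and its derivatives does give $\mathsf b(r),\,\mathsf c(r)\le O(r^{2-n})\to0$ since $n>d\ge2$. But $r\to0_+$ corresponds to $s\to+\infty$, where $\mathsf p\sim r\,\varphi^{-(p-2)/2}$ stays bounded away from $0$ and $\infty$, and the naive pointwise bounds $|\mathsf p'|=O(1/r)$ and $|\mathsf p''|,\,|\nabla\mathsf p'|/r,\,|\Delta\mathsf p|/r^2=O(1/r^2)$ only yield $\mathsf b(r),\,\mathsf c(r)\le O(r^{n-4})$, which does not vanish when $n\le4$, \emph{i.e.}, when $p\ge4$ (which occurs for $d=2,3$). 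This is precisely where the real work of the paper lies: Proposition~\ref{Prop:decayinRd} uses a Kelvin-type transformation to convert the $r\to0_+$ asymptotics into $r\to+\infty$ asymptotics for a solution of the same equation, and then proves refined decay estimates (i)--(v), \emph{integrated over} $\M$, for quantities such as $\varphi'/\varphi-\sqrt\Lambda$ and $\nabla\varphi/\varphi$, via an ODE comparison for $\chi(s)=\frac12\iM{|\cdot|^2}$ that exploits the Poincar\'e inequality on $\M$. The extra gain of a factor $e^{2\alpha s}$ obtained there rests on the inequality $\sqrt{\Lambda+\lambda_1^{\M}}-\sqrt\Lambda\ge\alpha$, which is \emph{equivalent} to $\alpha\le\alpha_{\rm FS}$. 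You state this hypothesis in the lemma but never invoke it in your argument; its absence from your boundary estimate is the symptom of the missing step.
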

%------------------------------------------------------------------------------
\begin{proof} Take $0<r<R<+\infty$. Then a straightforward integration by parts yields
\begin{multline*}
\int_{(r,R)\times\M}\left(\mathsf p\,\mathcal L\mathsf p-\frac n2\,|\D\mathsf p|^2-\frac{2\,(n-1)^2}{n-2}\right)(\mathcal L\,u^m)\,d\mu\\
=-\,n\,(n-1)^{n-1}\int_{(r,R)\times\M}\left(\frac12\,\L\,|\D\mathsf p|^2-\D\mathsf p\cdot\D\L\mathsf p-\frac1n\,(\L\,\mathsf p)^2\right)\mathsf p^{1-n}\,d\mu\\
+\alpha^2\,r^{n-1}\,\iM{\(\frac n2\,u^m\left(\frac{|\D\mathsf p|^2}{\mathsf p}\right)'+\frac{2\,(n-1)^2}{n-2}\,(u^m)'\)}\Bigg|_r^R\,.
\end{multline*}
The regularity of $\mathsf p$ will be proved in Appendix~\ref{Appendix1}. The boundary term is bounded by a constant times $\mathsf c(r)+c(R)$, where
\be{defc}
\mathsf c(r):=r^{n-1}\iM{\Big(|u'|\,u^{m-1}+u^m\,|\D\mathsf p|\,|\D\mathsf p'|+u^m\,|\D\mathsf p|^2\,\frac{|\mathsf p'|}{\mathsf p}\Big)}\,.
\ee 
By Proposition~\ref{Prop:decayinRd} in the Appendix, $\lim_{r\to 0} \mathsf c(r)=\lim_{R\to \infty}\mathsf c(R)=0$ and this ends the proof.
\end{proof}

\begin{proof}[Proof of Theorem~\ref{Thm:LV}] Let us consider a solution $\varphi$ of \eqref{Eqn:Cylinder}. Define $\mathsf p$ by \eqref{pvarphi}, which then satisfies \eqref{eqnp}. It follows from Lemma~\ref{Lem:ll1} that any positive solution of~\eqref{eqnp} satisfies $\mathcal K[\mathsf p]=0$. By Corollary~\ref{Cor:DerivFisherSign}, whenever $\alpha\le\alpha_{\rm FS}$, we get that $\mathcal K[\mathsf p]=0$ determines the solution and establishes the symmetry result.\end{proof}

\begin{proof}[Proof of Corollary~\ref{Cor:CylinderInequality}] We have to discuss the equality cases. A similar discussion has been done in~\cite[Theorem 4]{MR3229793}. Here we observe that the rigidity result covers the case $\Lambda=\lambda_\star/(p-2)$. Now, if
$\Lambda_\star<\Lambda_{\,\rm FS}$, let us consider $\Lambda_n >\Lambda_\star$ such that $\lim_{n\to+\infty}\Lambda_n=\Lambda_\star$. Then, taking a non-radially symmetric extremal function $\varphi_n$ of \eqref{ineqcylindergeneral} with $\Lambda=\Lambda_n$, by elliptic estimates, we see that the sequence $\{\varphi_n\}_n$ converges uniformly to an extremal solution of \eqref{ineqcylindergeneral} with $\Lambda=\Lambda_\star$. Since for any $\Lambda < \Lambda_{\,\rm FS}$ the radial extremals of \eqref{ineqcylindergeneral} are strict local minima, the radial extremal of \eqref{ineqcylindergeneral} for $\Lambda=\Lambda_\star$ cannot be approached by the sequence $\{\varphi_n\}_n$: see \cite{DELT09} for a similar case. Hence, if $\Lambda_\star<\Lambda_{\,\rm FS}$, at $\Lambda=\Lambda_\star$ there are at least two distinct nonnegative solutions of \eqref{eqlinder}, which contradicts the rigidity property at $\Lambda=\lambda_\star/(p-2)$. Thus we know that $\Lambda_\star>\frac{\lambda_\star}{p-2}$ if $\Lambda_\star<\Lambda_{\,\rm FS}$.\end{proof}

%%%%%%%%%%%%%%%%%%%%%%%%%%%%%%%%%%%%%%%%%%%%%%%%%%%%%%%%%%%%%%%%%%%%%%%%%%%%%%%
%%%%%%%%%%%%%%%%%%%%%%%%%%%%%%%%%%%%%%%%%%%%%%%%%%%%%%%%%%%%%%%%%%%%%%%%%%%%%%%
\section{Some consequences}\label{Sec:Consequences}

This section illustrates some consequences of our main results by two further results, respectively on Schr\"odinger operators on cylinders and Hardy type inequalities on the Euclidean space.

%%%%%%%%%%%%%%%%%%%%%%%%%%%%%%%%%%%%%%%%%%%%%%%%%%%%%%%%%%%%%%%%%%%%%%%%%%%%%%%
\subsection{Spectral estimates for Schr\"odinger operators on cylinders}\label{Sec:Spectral}

Rigidity results and optimality in interpolation inequalities have interesting consequences on spectral estimates for Schr\"odinger operators on cylinders. The results of this section have been announced in \cite{dolbeault:hal-01137403}. Here our goal is to compare
\begin{multline*}
\Lambda(\mu):=\sup\left\{\lambda_1^{\mathcal C}[V]:V\in\mathrm L^q(\mathcal C)\,,\;\nrmcndgen Vq=\mu\right\}\,,\\
\Lambda_\star(\mu):=\Lambda_\R\((\mathrm{vol}_g(\M)\big)^{-1/q}\,\mu\)\\
\mbox{and}\;\Lambda_\R(\mu):=\sup\left\{\lambda_1^{\R}[V]:V\in\mathrm L^q(\R)\,,\;\nrml Vq=\mu\right\}
\end{multline*}
where $-\lambda_1^{\mathcal C}[V]$ and $-\lambda_1^{\R}[V]$ denote the lowest eigenvalues of the Schr\"odinger operators $-\partial^2_s-\,\Delta-V$ and $-\partial^2_s-V$ respectively on $\mathcal C$ and $\R$.

\medskip Assume that $q\in(1,+\infty)$ and let us define
\[
\mu_1:=q\,(q-1)\(\frac{\sqrt\pi\;\Gamma(q)}{\Gamma(q+1/2)}\)^{1/q}\quad\mbox{and}\quad \beta:=\frac{2\,q}{2\,q-1}\,.
\]
Notice that $\mu_1=\mu_\star(\Lambda=1)$ with the notations of Section~\ref{Sec:Intro}. According to \cite{MR0121101,Lieb-Thirring76}, we have
\be{LmabdaStarMu}
\Lambda_\R(\mu)=(q-1)^2\,\(\frac\mu{\mu_1}\)^\beta\quad\forall\,\mu>0\,.
\ee
As a consequence, we obtain the one-dimensional \emph{Keller-Lieb-Thirring inequality}: if $V$ is a nonnegative real valued potential in $\mathrm L^q(\R)$, then we have
\be{ineqn:klt}
\lambda_1^\R[V]\le\Lambda_\R(\nrml Vq)\,.
\ee
Equality holds if and only if, up to scalings, translations and multiplications by a positive constant,
\[
V(s)=\frac{q\,(q-1)}{(\cosh s)^2}=:V_1(s)\quad\forall\,s\in\R
\]
where $\nrml{V_1}q=\mu_1$, $\lambda_1^\R[V_1]=\(q-1\)^2$. Moreover the function $\varphi(s)=(\cosh s)^{1-q}$ generates the corresponding eigenspace. See~\cite{DEL2011} for for more details in the context of Caffarelli-Kohn-Nirenberg inequalities.

The classical \emph{Keller-Lieb-Thirring inequality} in $\R^d$ asserts that for all $\gamma\ge0$ if $d\ge 3$, $\gamma>0$ if $d=2$, and $\gamma>1/2$ if $d=1$, the lowest negative eigenvalue, $-\lambda_1^{\R^d}[V]$, of the operator $-\,\Delta -V$ satisfies
\[
\lambda_1^{\R^d}[V]^\gamma\le\mathrm L^1_{\gamma,d}\,\nrmRd{V_+}{\gamma+d/2}^{\gamma+d/2}\quad\forall\,V\in\mathrm L^q(\R^d)
\]
with optimal constant $\mathrm L^1_{\gamma,d}$. See~\cite{MR0121101,Lieb-Thirring76,Dolbeault2013437} for details.
%-----------------------------------------------------------------------
\begin{prop}\label{Cor:SpectralCylinder} Let $d\ge2$ and $q\in(d/2,+\infty)$. The function $\mu\mapsto\Lambda(\mu)$ is convex, positive and such that, with $\gamma=q-\frac d2$,
\[
\Lambda(\mu)^{q-d/2}\sim\mathrm L^1_{\gamma,\,d}\,\mu^q\quad\mbox{as}\quad\mu\to+\infty\,.
\]
With the notations of Theorem~\ref{Thm:LV}, there exists a positive $\mu_\star$ with
\be{mustar}
\mathrm{vol}_g(\M)^\frac{2}{2q-1}\,\frac{\lambda_\star}{2\,(q-1)}\;\mu_1^\beta\le\mu_\star^\beta\le\mathrm{vol}_g(\M)^\frac{2}{2q-1}\,\frac{\lambda_1^{\mathfrak M}}{(2\,q-1)}\;\mu_1^\beta
\ee
such that
\[
\Lambda(\mu)=\Lambda_\star(\mu)\quad\forall\,\mu\in(0,\mu_\star]\quad\mbox{and}\quad\Lambda(\mu)>\Lambda_\star(\mu)\quad\forall\,\mu>\mu_\star\,.
\]
As a special case, if $\M=\S^{d-1}$, inequalities in~\eqref{mustar} are in fact equalities.
\end{prop}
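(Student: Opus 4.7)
The strategy is to establish a Legendre-type duality between $\Lambda(\mu)$ and the sharp constant $\mu(\Lambda)$ of~\eqref{ineqcylindergeneral}, and then pull the detailed information of Corollary~\ref{Cor:CylinderInequality} over to the spectral side. Setting $p=\frac{2q}{q-1}$, so that $q>d/2$ corresponds to $p\in(2,2^*)$, I would first prove
\[
\mu=\mu(\Lambda)\iff\Lambda=\Lambda(\mu).
\]
The direction $\Lambda(\mu(\Lambda))\le\Lambda$ follows by combining the H\"older bound $\int V\varphi^2\le\|V\|_q\,\nrmcndgen{\varphi}{p}^2$ with~\eqref{ineqcylindergeneral}: when $\|V\|_q\le\mu(\Lambda)$ the quadratic form $-\partial_s^2-\Delta_g-V+\Lambda$ is nonnegative, whence $\lambda_1^{\mathcal C}[V]\le\Lambda$. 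For the reverse inequality, test with $V_*=c\,\varphi_*^{p-2}$, where $\varphi_*$ is an extremal of~\eqref{ineqcylindergeneral} (existence by~\cite{Catrina-Wang-01}) and $c>0$ is chosen so that $\|V_*\|_q=\mu(\Lambda)$; equality in H\"older is then attained and gives $\lambda_1^{\mathcal C}[V_*]=\Lambda$. Strict monotonicity and continuity of $\Lambda\mapsto\mu(\Lambda)$ upgrade this to the stated equivalence.

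The qualitative properties of $\Lambda(\mu)$ then drop out of the duality. Concavity of $\Lambda\mapsto\mu(\Lambda)$ (as an infimum of functions affine in~$\Lambda$) together with $\mu(0)=0$ gives convexity and positivity of $\mu\mapsto\Lambda(\mu)$. Restricting in the definition of $\Lambda(\mu)$ to potentials depending only on $s$ yields the trivial lower bound $\Lambda(\mu)\ge\Lambda_\R\bigl((\mathrm{vol}_g(\M))^{-1/q}\mu\bigr)=\Lambda_\star(\mu)$, and via the duality the equality $\Lambda(\mu)=\Lambda_\star(\mu)$ is equivalent to $\mu(\Lambda)=\mu_\star(\Lambda)$ at $\Lambda=\Lambda(\mu)$. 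Corollary~\ref{Cor:CylinderInequality} then identifies the transition at $\mu_\star:=\mu(\Lambda_\star)=\mu_\star(\Lambda_\star)$: symmetry on the interpolation side for $\Lambda\le\Lambda_\star$ corresponds exactly to $\Lambda(\mu)=\Lambda_\star(\mu)$ for $\mu\le\mu_\star$, with strict inequality beyond.

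The bracketing~\eqref{mustar} is then immediate algebra. Inverting~\eqref{LmabdaStarMu} gives $\mu_\star(\Lambda)=(\mathrm{vol}_g(\M))^{1/q}\,\mu_1\,\bigl(\Lambda/(q-1)^2\bigr)^{1/\beta}$, hence
\[
\mu_\star^\beta=(\mathrm{vol}_g(\M))^{2/(2q-1)}\,\mu_1^\beta\,\frac{\Lambda_\star}{(q-1)^2}.
\]
Substituting the bounds $\lambda_\star/(p-2)\le\Lambda_\star\le\Lambda_{\rm FS}$ from Corollary~\ref{Cor:CylinderInequality}, together with $p-2=2/(q-1)$ and the rewriting $\Lambda_{\rm FS}=4\lambda_1^{\M}/(p^2-4)=\lambda_1^{\M}(q-1)^2/(2q-1)$, produces both inequalities of~\eqref{mustar}. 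On $\M=\S^{d-1}$, the explicit value $\lambda_\star=(d-1)(n-2)/(n-1)$ from Remark~\ref{Rem:abc}, rewritten via $n=2p/(p-2)$, gives $\lambda_\star/(p-2)=\Lambda_{\rm FS}$, so both bounds coincide and $\mu_\star$ is pinned down exactly.

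The main technical obstacle, in my view, is the semiclassical asymptotic $\Lambda(\mu)^{q-d/2}\sim\mathrm L^1_{\gamma,d}\,\mu^q$ as $\mu\to+\infty$. The plan is a two-sided concentration argument: for the upper bound on $\Lambda(\mu)$, transplant a near-optimizing sequence for the sharp Euclidean Keller-Lieb-Thirring inequality on $\R^d$ into a small geodesic ball of $\mathcal C$, using that such a ball is quasi-isometric to a Euclidean ball and thus feels the full dimension~$d$ rather than only~$d-1$; for the matching lower bound, show that optimizing potentials for $\Lambda(\mu)$ with large $\mu$ must concentrate on a vanishing scale, so that the global geometry of $\M$ and the Poincar\'e direction contribute only lower-order corrections and the sharp constant $\mathrm L^1_{\gamma,d}$ emerges. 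Quantifying this concentration rigorously is the delicate step.
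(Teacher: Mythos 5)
Your proposal follows essentially the same route as the paper: the H\"older duality identifying $\mu\mapsto\Lambda(\mu)$ as the inverse of $\Lambda\mapsto\mu(\Lambda)$, the transfer of Corollary~\ref{Cor:CylinderInequality} to the spectral side, and the explicit algebra converting $\lambda_\star/(p-2)\le\Lambda_\star\le\Lambda_{\,\rm FS}$ into~\eqref{mustar} are exactly the paper's argument, and your computations check out. The only point where you go beyond the paper is the large-$\mu$ asymptotics, which the paper simply delegates to~\cite{Dolbeault2013437}; your two-sided concentration plan is the right idea but, as you acknowledge, is only a sketch rather than a proof.
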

%-----------------------------------------------------------------------
\begin{proof} The existence of the function $\mu\mapsto\Lambda(\mu)$ is an easy consequence of a H\"older estimate:
\[
\nrmcndgen{\partial_su}2^2+\nrmcndgen{\nabla u}2^2-\iCgen{V\,|u|^2}\ge\nrmcndgen{\partial_su}2^2+\nrmcndgen{\nabla u}2^2-\mu\,\nrmcndgen up^2
\]
with $\mu=\nrmcndgen{V_+}q$ and $q=p/(p-2)$, and of the Gagliardo-Nirenberg inequality~\eqref{ineqcylindergeneral}. Since the equality case in H\"older's inequality is achieved by $V=u^{p-2}$ up to some multiplicative constant, our Keller-Lieb-Thirring inequality
\[
\lambda_1^{\mathcal C}[V]\le\Lambda\big(\nrmcndgen{V_+}q\big)\quad\forall\,V\in\mathrm L^q(\mathcal C)\,.
\]
is in fact exactly equivalent to~\eqref{ineqcylindergeneral} and $\mu\mapsto\Lambda(\mu)$ is the inverse of the function $\Lambda\mapsto\mu(\Lambda)$ in~\eqref{ineqcylindergeneral}. Hence~\eqref{mustar} is equivalent to the estimates of Corollary~\ref{Cor:CylinderInequality}. The estimate of $\Lambda(\mu)$ as $\mu\to+\infty$ and its other properties can be proved exactly as in~\cite{Dolbeault2013437}.
\end{proof}

%%%%%%%%%%%%%%%%%%%%%%%%%%%%%%%%%%%%%%%%%%%%%%%%%%%%%%%%%%%%%%%%%%%%%%%%%%%%%%%
\subsection{Hardy inequalities with potentials}\label{Sec:Hardy}

With $v(x)=|x|^a\,u(x)$ and $\Lambda=(a_c-a)^2$, the Caffarelli-Kohn-Nirenberg inequalities~\eqref{CKN} can be rewritten as
\[
\ird{|\nabla u|^2}-\(a_c^2-\Lambda\)\ird{\frac{|u|^2}{|x|^2}}\ge\mu(\Lambda)\(\ird{\frac{|u|^p}{|x|^{(b-a)p}}}\)^{2/p}\,.
\]
On the other hand, if $V$ is a given smooth nonnegative potential on $\R^d$ such that $V(0)=0$, then by H\"older's inequality we get that
\[
\ird{V\,\frac{|u|^2}{|x|^2}}=\ird{\frac V{|x|^\frac dq}\,\frac{|u|^2}{|x|^{2\(\frac dp-a_c\)}}}\le\(\ird{\frac{V^q}{|x|^d}}\)^{1/q}\(\ird{\frac{|u|^p}{|x|^{(b-a)p}}}\)^{2/p}.
\]
Let us denote by $\mu\mapsto\Lambda(\mu)$ the inverse of $\Lambda\mapsto\Lambda(\mu)$. Then we have the following result.%-----------------------------------------------------------------------
\begin{prop}\label{Cor:Hardy} Let $d\ge1$ and $q\in(\min\{1,d/2\},+\infty)$. Assume that $V$ is a nonnegative function such that $|x|^{-d}\,V^q$ is integrable. The for any $u\in\dot{\mathrm H}^1(\R^d)$, we have
\[
\ird{|\nabla u|^2}-\ird{V\,\frac{|u|^2}{|x|^2}}-\(a_c^2-\Lambda(\mu)\)\ird{\frac{|u|^2}{|x|^2}}\ge0\;\mbox{if}\;\mu=\(\ird{\frac{V^q}{|x|^d}}\)^{1/q}.
\]
As a special case, if $\mu=\(\ird{\frac{V^q}{|x|^d}}\)^{1/q}\le\mu_\star$ with $\mu_\star$ defined as in Proposition~\ref{Cor:SpectralCylinder}, then with $\Lambda_\star$ given by~\eqref{LmabdaStarMu}, we have
\[
\ird{|\nabla u|^2}-\ird{V\,\frac{|u|^2}{|x|^2}}-\(a_c^2-\Lambda_\star(\mu)\)\ird{\frac{|u|^2}{|x|^2}}\ge0\quad\forall\,u\in\dot{\mathrm H}^1(\R^d)\,.
\]
\end{prop}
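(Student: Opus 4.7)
The plan is to combine the two estimates recalled immediately before the statement, namely the rewritten Caffarelli-Kohn-Nirenberg inequality
\[
\ird{|\nabla u|^2}-\(a_c^2-\Lambda\)\ird{\frac{|u|^2}{|x|^2}}\ge\mu(\Lambda)\(\ird{\frac{|u|^p}{|x|^{(b-a)p}}}\)^{2/p},
\]
valid for every admissible $\Lambda>0$, and the H\"older estimate
\[
\ird{V\,\frac{|u|^2}{|x|^2}}\le\mu\(\ird{\frac{|u|^p}{|x|^{(b-a)p}}}\)^{2/p},\quad\mu:=\(\ird{\frac{V^q}{|x|^d}}\)^{1/q},
\]
obtained with the conjugate exponents $q=p/(p-2)$ and $p/2$. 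Before chaining them I would verify that the weights match, i.e., that $(2-d/q)\,p/2=(b-a)\,p$; this follows from~\eqref{exponent-relationabp}, which yields $b-a=1-d/2+d/p$, and it is exactly what forces $q=p/(p-2)$.

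The core step is to match the scaling parameter $\Lambda$ to the size of $V$. Given $\mu>0$, I would pick $\Lambda=\Lambda(\mu)$, the inverse of the strictly monotone optimal-constant function $\Lambda\mapsto\mu(\Lambda)$ of~\eqref{ineqcylindergeneral} applied on $\mathcal C_1=\R\times\S^{d-1}$, so that $\mu(\Lambda(\mu))=\mu$. Dividing the H\"older estimate by $\mu$ and substituting it into the right-hand side of the Caffarelli-Kohn-Nirenberg inequality eliminates the weighted $\mathrm L^p$-term and produces
\[
\ird{|\nabla u|^2}-\ird{V\,\frac{|u|^2}{|x|^2}}-\(a_c^2-\Lambda(\mu)\)\ird{\frac{|u|^2}{|x|^2}}\ge0,
\]
which is the first assertion of the proposition.

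For the second assertion, when $\mu\le\mu_\star$, I would replace $\Lambda(\mu)$ by the explicit value $\Lambda_\star(\mu)$. The identification $\mu(\Lambda)=\mu_\star(\Lambda)$ provided by Corollary~\ref{Cor:CylinderInequality} on the interval $(0,\Lambda_\star]$ inverts, by monotonicity, into $\Lambda(\mu)=\Lambda_\star(\mu)$ on the corresponding interval $(0,\mu_\star]$, with $\Lambda_\star(\mu)$ given in closed form by~\eqref{LmabdaStarMu} via Proposition~\ref{Cor:SpectralCylinder}. Substituting this identity into the inequality proved in the first step concludes the argument.

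The main---rather minor---difficulty is bookkeeping of conventions: the symbols $\Lambda$ and $\mu$ simultaneously denote free parameters, optimal constants, and the H\"older bound of $V$, and the correspondence $q=p/(p-2)$ must be tracked carefully so that the cylinder rigidity range $\Lambda\le\Lambda_\star$ is transported correctly onto the Schr\"odinger range $\mu\le\mu_\star$ of the Hardy-type statement on $\R^d$. No new analysis is needed beyond this alignment, since every analytical ingredient is already contained in~\eqref{ineqcylindergeneral}, Corollary~\ref{Cor:CylinderInequality}, Proposition~\ref{Cor:SpectralCylinder} and H\"older's inequality.
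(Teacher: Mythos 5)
Your argument is exactly the one the paper intends: the proposition follows by chaining the rewritten Caffarelli--Kohn--Nirenberg inequality with the H\"older estimate displayed just before the statement, choosing $\Lambda=\Lambda(\mu)$ so that $\mu(\Lambda(\mu))=\mu$, and then identifying $\Lambda(\mu)=\Lambda_\star(\mu)$ for $\mu\le\mu_\star$ via Proposition~\ref{Cor:SpectralCylinder} and Corollary~\ref{Cor:CylinderInequality}. The weight bookkeeping you check, namely $(b-a)\,p=d-p\,a_c$ and the conjugacy $q=p/(p-2)$, is correct, so the proposal is complete and matches the paper's (implicit) proof.
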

%-----------------------------------------------------------------------
The above result is a generalized form of Hardy's inequality. If $d\ge3$, we recover the usual form by taking $V\equiv0$, with optimal constant $a_c^2$. There is no optimal potential because the equality in H\"older's inequality would mean that $V$ is proportional to $|u|^{p-2}$, so that $|x|^{-d}\,V^q$ is not integrable if $v(x)=|x|^{-a}\,u(x)$ is an optimal function for~\eqref{CKN}.

%%%%%%%%%%%%%%%%%%%%%%%%%%%%%%%%%%%%%%%%%%%%%%%%%%%%%%%%%%%%%%%%%%%%%%%%%%%%%%%
%%%%%%%%%%%%%%%%%%%%%%%%%%%%%%%%%%%%%%%%%%%%%%%%%%%%%%%%%%%%%%%%%%%%%%%%%%%%%%%
\appendix\section{Regularity and decay estimates}\label{Appendix1}

We denote by $'$ and $\nabla$ the differentiation with respect to $s$ and $\omega$ respectively. We work in the general setting and do not assume that $\M=\S^{d-1}$.
%------------------------------------------------------------------------------
\begin{prop}\label{Prop:estimates} Any positive solution $\varphi\in\mathrm H^1(\mathcal C)$ of~\eqref{eqlinder} with $p\in (2, 2^*)$ is uniformly bounded and smooth. Moreover there are two positive constants, $C_1$ and $C_2$ such that, for all $(s,\omega)\in\mathcal C$,
\[\label{asympt-beh-ders}
C_1\,e^{-\sqrt\Lambda\,|s|}\le\varphi(s,\omega)\le C_2\,e^{-\sqrt\Lambda\,|s|}\,,
\]
\[\label{asympt-beh-ders2}
|\varphi'(s,\omega)|\kern1.2pt,\;|\varphi''(s,\omega)|\kern1.2pt,\;|\nabla\varphi(s,\omega)|\kern1.2pt,\;|\Delta\,\varphi(s,\omega)|\le C_2\,e^{-\sqrt\Lambda\,|s|}\,.
\]\end{prop}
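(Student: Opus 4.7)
The plan is to establish the three claims in sequence: (i) smoothness and a uniform $L^\infty$ bound (together with pointwise decay to zero), (ii) the exponential upper and lower bounds on $\varphi$, and (iii) the corresponding bounds on the derivatives.

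\emph{Regularity and decay to zero.} Since $p\in(2,2^*)$, the equation is subcritical, so Moser iteration applied on unit cylinders $Q_{s_0}:=(s_0-1,s_0+1)\times\M$ gives $\|\varphi\|_{L^\infty(Q_{s_0})}\le C(\|\varphi\|_{L^p(Q_{s_0}')}+1)$ on a slightly larger cylinder $Q_{s_0}'$, hence a global $L^\infty$ bound by translation invariance and the finiteness of $\|\varphi\|_{H^1(\mathcal C)}$. Since $\varphi^{p-1}\in L^\infty(\mathcal C)$, classical Schauder estimates and the smoothness of $\M$ yield $\varphi\in C^\infty(\mathcal C)$. Moreover $\|\varphi\|_{L^p(Q_{s_0})}\to 0$ as $|s_0|\to\infty$ (because $\varphi\in L^p(\mathcal C)$ via the Sobolev embedding), so the same estimates force $\sup_{\omega}\varphi(s,\omega)\to 0$ as $|s|\to\infty$.

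\emph{Upper exponential bound.} Fix a small $\epsilon>0$ and, using pointwise decay, choose $R$ so that $\varphi^{p-2}<\epsilon$ on $\{|s|\ge R\}\times\M$. Rewriting \eqref{eqlinder} as $-\partial_s^2\varphi-\Delta_\omega\varphi+(\Lambda-\varphi^{p-2})\varphi=0$, the function $\varphi$ is a subsolution of the linear operator $-\partial_s^2 -\Delta_\omega+(\Lambda-\epsilon)$ on that half-cylinder, so comparison with the radial barrier $s\mapsto M\,e^{-\sqrt{\Lambda-\epsilon}(|s|-R)}$ (with $M$ large enough to dominate $\varphi$ on $\{|s|=R\}\times\M$) gives $\varphi\le M\,e^{-\sqrt{\Lambda-\epsilon}(|s|-R)}$. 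To upgrade the exponent to the sharp value $\sqrt\Lambda$, I would iterate once: when $\epsilon$ is small enough to ensure $\mu:=\sqrt{\Lambda-\epsilon}$ satisfies $\mu(p-1)>\sqrt\Lambda$ (possible since $p-1>1$), the bound $\varphi\le C\,e^{-\mu|s|}$ implies $\varphi^{p-1}\le C^{p-1}e^{-\mu(p-1)|s|}$. Expanding $\varphi=\sum_k a_k(s)\phi_k(\omega)$ in an $L^2(\M)$-eigenbasis of $-\Delta_\omega$, each $a_k$ solves $-a_k''+(\Lambda+\lambda_k^{\M})a_k=b_k$ with $|b_k|\le C\,e^{-\mu(p-1)|s|}$; representation by convolution with the Green function $\tfrac{1}{2\sqrt{\Lambda+\lambda_k^{\M}}}e^{-\sqrt{\Lambda+\lambda_k^{\M}}|\cdot|}$ (the $L^2$-integrability of $a_k$ killing the homogeneous part) forces $|a_k(s)|\le C_k\,e^{-\sqrt{\Lambda+\lambda_k^{\M}}|s|}$, and summing over $k$ yields $\varphi(s,\omega)\le C_2\,e^{-\sqrt\Lambda|s|}$ since the slowest rate, achieved at $k=0$, is exactly $\sqrt\Lambda$.

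\emph{Lower bound and derivatives.} For the matching lower bound, I observe that $\underline\varphi(s):=c\,e^{-\sqrt\Lambda|s|}$ (independent of $\omega$) satisfies $-\underline\varphi''+\Lambda\underline\varphi=0\ge-\varphi^{p-2}\underline\varphi$, so $\underline\varphi$ is a subsolution of $-\partial_s^2 u-\Delta_\omega u+(\Lambda-\varphi^{p-2})u=0$ on all of $\mathcal C$. Since Harnack's inequality applied to the equation on the compact section $\{0\}\times\M$ gives $\inf_\omega\varphi(0,\omega)>0$, one may choose $c>0$ so small that $\underline\varphi(0)\le\varphi(0,\cdot)$; the comparison principle on $\{s>0\}\times\M$, with both functions tending to zero at $+\infty$, then yields $\varphi\ge\underline\varphi$, and symmetrically for $s<0$. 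The derivative estimates finally follow from Schauder estimates on $Q_{s_0}$: $\|\varphi\|_{C^2(Q_{s_0})}\le C(\|\varphi\|_{L^\infty(Q_{s_0}')}+\|\varphi^{p-1}\|_{L^\infty(Q_{s_0}')})\le C'e^{-\sqrt\Lambda|s_0|}$, using the just-obtained upper bound. The key technical obstacle is extracting the \emph{sharp} exponent $\sqrt\Lambda$ rather than $\sqrt{\Lambda-\epsilon}$; this is precisely where the bootstrap-convolution step is needed, and it only works because $p-1>1$ makes the nonlinearity strictly subleading at small values of $\varphi$.
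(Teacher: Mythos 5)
Your overall architecture coincides with the paper's: Moser iteration and elliptic bootstrap for boundedness, smoothness and uniform decay; a comparison argument giving the non-sharp rate $e^{-\sqrt{\Lambda-\epsilon}\,|s|}$; the subsolution $c\,e^{-\sqrt\Lambda\,|s|}$ for the lower bound; and interior Schauder estimates for the derivatives. The genuine difference is how you upgrade the upper bound to the sharp exponent $\sqrt\Lambda$. The paper only uses the crude consequence $\varphi^{p-2}\le M/s^2$ of Step~2 and compares with the supersolution $e^{-\sqrt\Lambda\,|s|}\,e^{\lambda/|s|}$ of $-\partial_s^2-\Delta_\omega+(\Lambda-M/s^2)$ with $\lambda<-M/(2\sqrt\Lambda)$; you instead exploit the full nonlinear gain, choosing $\epsilon$ so that $\mu\,(p-1)>\sqrt\Lambda$ and feeding $\varphi^{p-1}=O(e^{-\mu(p-1)|s|})$ back into the linear equation via the Green function. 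Both are legitimate; yours has the advantage of showing that the non-constant modes decay at the faster rate $e^{-\min(\mu(p-1),\sqrt{\Lambda+\lambda_1^{\M}})\,|s|}$, which is exactly the kind of refinement the paper needs later (Proposition~\ref{Prop:decayinRd}), while the paper's barrier is more economical. Two points in your write-up need tightening. First, ``summing over $k$'' in the eigenfunction expansion requires uniform control of $\sum_k C_k\,\|\phi_k\|_\infty$, i.e.\ rapid decay of the Fourier coefficients of $\varphi^{p-1}$ in $k$, which you have not justified at that stage; this is avoidable altogether by using the single scalar supersolution $h(s)=A\,e^{-\sqrt\Lambda\,|s|}-B\,e^{-\sigma|s|}$ with $\sigma=\mu\,(p-1)$ and $B=C/(\sigma^2-\Lambda)$, which satisfies $-h''+\Lambda\,h=C\,e^{-\sigma|s|}\ge\varphi^{p-1}$ on $\{|s|>R\}$ and yields the bound directly by comparison. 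Second, for the lower bound you invoke the comparison principle for $-\partial_s^2-\Delta_\omega+(\Lambda-\varphi^{p-2})$ on all of $\{s>0\}$, where the zeroth-order coefficient may be negative near $s=0$ and the maximum principle is not automatic; it is cleaner to do what the paper does, namely compare with the operator $-\partial_s^2-\Delta_\omega+\Lambda$ (for which $\varphi$ is a supersolution since $\varphi^{p-1}\ge0$ and $e^{-\sqrt\Lambda\,|s|}$ is a solution), whose zeroth-order term is nonnegative everywhere.
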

%------------------------------------------------------------------------------
\begin{proof} A similar result was proved in~\cite{MR1214866}. Here we work in a more general setting when $\M\neq\S^{d-1}$. For sake of completeness, we sketch the main steps of the proof.

\smallskip\noindent\emph{Step 1. The solution is bounded, smooth and $\lim_{|s|\to+\infty}\varphi(s,\omega)=0$ for any \hbox{$\omega\in\M$}.} Boundedness is obtained by a Moser iteration scheme. The $C^\infty$ regularity follows by a localized boot-strap argument based on, \emph{e.g.}, \cite[Corollary~7.11, Theorem~8.10, and Corollary~8.11]{MR1814364}. If $s\mapsto\chi(s)$ is a smooth truncation function such that \hbox{$0\le\chi\le1$}, $\chi\equiv1$ if $|s|\le1$ and $\chi\equiv0$ if $|s|\ge2$, then $\varphi_\varepsilon(s,\omega):=\varphi(s,\omega)\,\big(1-\chi(\varepsilon\,s)\big)$ has an arbitrary small norm in $\mathrm H^1(\mathcal C)$ and $\lim_{\varepsilon\to0_+}\nrmcndgen{\varphi_\varepsilon}\infty=0$, again by a Moser iteration scheme.

\smallskip\noindent\emph{Step 2. Exponential decay of $\varphi$ in $|s|$.} For any $\mu\in(0,\sqrt\Lambda)$, let $h(s):=e^{-\mu\,|s|}$ and define
\[
s_\mu:=\inf\left\{s>0\,:\,|\varphi(\sigma,\omega)|^{p-2}<\Lambda-\mu^2\,,\;\forall\,(\sigma,\omega)\in\mathcal C\cap\{|\sigma|>s\}\right\}\,.
\]
By the Strong Maximum Principle applied to the function $(h-\varphi)$ which solves
\[
-\,\partial^2_s\,(h-\varphi)-\Delta\,(h-\varphi)+\mu^2\,(h-\varphi)\ge\(\Lambda-\mu^2-|\varphi|^{p-2}\)\,\varphi\ge0
\]
for $|s]\ge s_\mu$, we get the estimate
\[
0<\varphi\le\nrmcndgen\varphi\infty\,e^{-\mu\,(|s|-s_\mu)}\quad\forall\,(s,\omega)\in\mathcal C\cap\{|s|>s_\mu\}\,.
\]

\smallskip\noindent\emph{Step 3. Optimal exponential decay of $\varphi$ in $|s|$.} The function $h_1(s,\omega):=e^{-\sqrt\Lambda\,|s|}$ satisfies the equation $-\,\Delta\,h_1+\Lambda\,h_1=0$ on $\mathcal C\cap\{|s|>1\}$. Hence, by the Strong Maximum Principle, we have
\[\label{optestbelow}
\varphi(s,\omega)\ge\(\min_{\mathcal C\cap\{|s|\le 1\}}\varphi\)\,e^{-\sqrt\Lambda\,(|s|-1)}\,.
\]
{}From Step 2 we know that for some positive $M$ and $\bar s$, we have
\[\textstyle
-\,\partial^2_s\,\varphi-\Delta\,\varphi+\(\Lambda-\frac M{s^2}\)\varphi\le 0\quad\mbox{in}\quad\mathcal C\cap\{|s|>\bar s\}\,,
\]
while the function $h_2(s,\omega):=e^{-\sqrt\Lambda\,|s|}\,e^{\frac\lambda{|s|}}$ satisfies
\[\textstyle
-\,\partial^2_s\,h_2-\Delta\,h_2+\(\Lambda-\frac M{s^2}\) h_2=-\frac1{s^2}\(M+2\,\lambda\,\sqrt\Lambda+\frac{2\,\lambda}s+\frac\lambda{s^2}\)h_2\quad\mbox{in}\quad\mathcal C\cap\{|s|>\bar s\}\,.
\]
By taking $\lambda<-\frac M{2\sqrt\Lambda}$ and applying the Strong maximum Principle for $S>0$ large enough, we obtain
\[\label{optestabove}
0<\varphi\le\|\varphi\|_{L^\infty(\mathcal C)}\,e^{-\frac\lambda S}\,e^{-\sqrt\Lambda\,(|s|-S)}\quad\mbox{in}\quad\mathcal C\cap\{|s|>S\}\,.
\]

\smallskip\noindent\emph{Step 4. Optimal exponential decay in $|s|$ for $\nabla\varphi$, $\Delta\,\varphi$.} Using local charts and \cite[Theorem~8.32, p.~210]{MR1814364} on local $C^{1,\alpha}$ estimates, all first derivatives of $\varphi$ converge to~$0$ with rate $e^{-\sqrt\Lambda\,|s|}$ as $|s|\to+\infty$. \cite[Theorem~8.10, p.~186]{MR1814364} provides local $\mathrm W^{k+2,2}$ estimates of the order $e^{-\sqrt\Lambda\,|s|}$ for $|s|$ large enough. The result follows from~\cite[Corollary~7.11, Theorem~8.10, and Corollary~8.11]{MR1814364} if $k$ is taken large enough.\end{proof}

Next we rephrase the results of Proposition~\ref{Prop:estimates} in the language of the \emph{pressure function} $\mathsf p$ of Section~\ref{Sec:Flow} using~\eqref{pvarphi} and establish the estimates needed in Lemmas~\ref{Lem:DerivFisher} and~\ref{Lem:ll1}.
%------------------------------------------------------------------------------
\begin{prop}\label{Prop:decayinRd} Let $m=1-1/n$ and $\varphi\in\mathrm H^1(\mathcal C)$ be a positive solution of~\eqref{Eqn:Cylinder} with $p\in (2, 2^*)$. Then the functions $\mathsf p$ associated with $\varphi$ according to~\eqref{pvarphi} are such that $\mathsf p''$, $\mathsf p'/r$, $\mathsf p/r^2$, $\nabla\mathsf p'/r$, $\nabla\mathsf p/r^2$ and $\Delta\mathsf p/r^2$ are bounded as $r\to+\infty$ and of class $C^\infty$ on $(0,\infty)\times\M$. Moreover, if $\alpha \le \alpha_{\rm FS}$, as $r\to0_+$, we have
\begin{enumerate}
\item[(i)] $\iM{|\mathsf p'(r,\omega)|^2}\le O(1)$,
\item[(ii)] $\iM{|\nabla\mathsf p(r,\omega)|^2}\le O(r^2)$,
\item[(iii)] $\iM{|\mathsf p''(r,\omega)|^2}\le O(1/r^2)$,
\item[(iv)] $\iM{\left|\nabla\mathsf p'(r,\omega)-\tfrac1r\,\nabla\mathsf p(r,\omega)\right|^2}\le O(1)$,
\item[(v)] $\iM{\left|\Delta\mathsf p(r,\omega)\right|^2}\le O(1/r^2)$.
\end{enumerate}
Moreover, with the notations defined by~\eqref{b} and \eqref{defc}, 
\[
\lim_{r\to0_+}\mathsf b(r)=0=\lim_{r\to+\infty}\mathsf b(r)
\]
and
\[
\lim_{r\to0_+}\mathsf c(r)=0=\lim_{r\to+\infty}\mathsf c(r)\,.
\]
\end{prop}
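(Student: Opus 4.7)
\bigskip

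The plan is to reduce every claim about $\mathsf{p}$ to a claim about $\varphi$ via the explicit formula $\mathsf{p}(r,\omega)=K\,r\,\varphi(s,\omega)^{-(p-2)/2}$ with $K=(p+2)/(p-2)=n-1$ and $s=-\log r/\alpha$, and then invoke Proposition~\ref{Prop:estimates}. The chain rule ($ds/dr=-1/(\alpha r)$) together with $\alpha=(p-2)\sqrt{\Lambda}/2$ yields the key identity
\[
\mathsf{p}_r=\frac{K}{\sqrt{\Lambda}}\,\varphi^{-p/2}\big(\sqrt{\Lambda}\,\varphi+\varphi_s\big),
\]
and by one more differentiation
\[
\mathsf{p}_{rr}=\frac{K}{\alpha^2\,r}\,\big(\psi_{ss}-\alpha\,\psi_s\big),\qquad\psi:=\varphi^{-(p-2)/2},
\]
while $\nabla\mathsf{p}=Kr\,\nabla(\varphi^{-(p-2)/2})$, $\Delta\mathsf{p}=Kr\,\Delta(\varphi^{-(p-2)/2})$, and $\nabla\mathsf{p}_r$ is a combination of $\nabla\varphi$ and $\nabla\varphi_s$ weighted by explicit powers of $\varphi$. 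Smoothness on $(0,\infty)\times\M$ follows immediately from the smoothness of $\varphi$ and positivity ($\varphi>0$).

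The pointwise boundedness claims ($\mathsf{p}''$, $\mathsf{p}'/r$, $\mathsf{p}/r^2$, $\nabla\mathsf{p}'/r$, $\nabla\mathsf{p}/r^2$, $\Delta\mathsf{p}/r^2$ bounded as $r\to+\infty$) follow by plugging in $|\varphi|,|\varphi'|,|\varphi''|,|\nabla\varphi|,|\nabla\varphi'|,|\Delta\varphi|\le C_2\,e^{-\sqrt{\Lambda}|s|}$ and noting that at $s\to-\infty$ this gives $e^{-\sqrt{\Lambda}|s|}=r^{-2/(p-2)}$, while $\varphi^{-p/2}$ contributes the compensating positive power of $r^2$, since $2/(p-2)\cdot p/2 - 2=2$. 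For the integrated bounds (i)--(v) as $r\to 0_+$, the naive application of Proposition~\ref{Prop:estimates} is insufficient: for example $|\mathsf{p}_r|$ is only bounded by $O(1/r)$ unless one exploits the cancellation $\sqrt{\Lambda}\,\varphi+\varphi_s=o(e^{-\sqrt{\Lambda}s})$. This refined decay is obtained by recognizing that $F:=\sqrt{\Lambda}\,\varphi+\varphi_s$ solves the first-order equation
\[
F_s-\sqrt{\Lambda}\,F=-\Delta\varphi-\varphi^{p-1}
\]
deduced from \eqref{eqlinder}. Integrating against the factor $e^{-\sqrt{\Lambda}s}$ from $s$ to $+\infty$ (using that $e^{-\sqrt{\Lambda}s}F\to 0$ at $+\infty$ by the rough bounds) one gets
\[
F(s,\omega)=e^{\sqrt{\Lambda}s}\int_s^{+\infty}\!\!e^{-\sqrt{\Lambda}\sigma}\big(\Delta\varphi+\varphi^{p-1}\big)(\sigma,\omega)\,d\sigma,
\]
and a bootstrap in the angular variable (using that the leading-order amplitude $A(\omega)$ in $\varphi\sim A(\omega)e^{-\sqrt{\Lambda}s}$ must satisfy $\Delta_g A=0$, and hence is constant on the compact $\M$) upgrades the decay of $F$ beyond $e^{-\sqrt{\Lambda}s}$. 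Substituting this improved rate back into the formula for $\mathsf{p}_r$ (and likewise into the formulas for $\mathsf{p}_{rr}$, $\nabla\mathsf{p}$, $\nabla\mathsf{p}'-\mathsf{p}/r$, and $\Delta\mathsf{p}$) yields exactly the powers of $r$ asserted in (i)--(v).

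Finally, the vanishing of the boundary quantities $\mathsf{b}(r)$ and $\mathsf{c}(r)$ at both ends is extracted from (i)--(v) by Cauchy--Schwarz in the angular variable: at $r\to 0$ each integrand is at worst $O(1/r^2)$, so the prefactor $r^{n-1}$ with $n-1=K>0$ forces the limit to be zero (here one uses $n>2$, which holds as $p<2^*$); at $r\to+\infty$ the exponential decay of $\varphi$ at $s\to-\infty$ dominates the polynomial factor $r^{n-1}$, with the analogous role played by $F_-:=\sqrt{\Lambda}\,\varphi-\varphi_s$, controlled by the mirror ODE. The main obstacle in the whole program is precisely the refined asymptotic step: Proposition~\ref{Prop:estimates} gives only size estimates, and one must separately establish that the leading coefficient $A(\omega)$ in the expansion of $\varphi$ at infinity is $\omega$-independent and that the remainder decays at the rate $e^{-\kappa|s|}$ for some $\kappa>\sqrt{\Lambda}$. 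Once this is granted, the rest is a bookkeeping exercise in the change of variables.
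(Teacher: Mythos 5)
Your reduction of the statement to refined asymptotics for $\varphi$ is the right frame, and your formula $\mathsf p_r=\tfrac{n-1}{\sqrt\Lambda}\,\varphi^{-p/2}\,(\sqrt\Lambda\,\varphi+\varphi_s)$ correctly isolates the cancellation that must be exploited; the smoothness and the $r\to+\infty$ bounds are indeed routine consequences of Proposition~\ref{Prop:estimates}. But there is a genuine gap exactly at the point you yourself call ``the main obstacle'': the improved decay of $F=\sqrt\Lambda\,\varphi+\varphi_s$ (equivalently, that the remainder in $\varphi\sim A\,e^{-\sqrt\Lambda|s|}$ decays at a rate $e^{-\kappa|s|}$ with $\kappa\ge\sqrt\Lambda+\alpha$) is never established, and the route you sketch does not close. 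Your integral representation $F(s,\omega)=e^{\sqrt\Lambda s}\int_s^{\infty}e^{-\sqrt\Lambda\sigma}\,(\Delta\varphi+\varphi^{p-1})\,d\sigma$ is correct, but the source term $\Delta\varphi$ is a priori only $O(e^{-\sqrt\Lambda|\sigma|})$ by Proposition~\ref{Prop:estimates}, and inserting that bound returns $F=O(e^{-\sqrt\Lambda|s|})$, i.e.\ no gain whatsoever over the trivial estimate $F=O(\varphi)$. To do better you must already know that $\nabla\varphi$ and $\Delta\varphi$ decay \emph{faster} than $\varphi$ --- which is precisely items (ii) and (v), part of what is being proved. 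The ``bootstrap in the angular variable'' via $\Delta_g A=0$ is only a formal expansion: nothing in Proposition~\ref{Prop:estimates} guarantees the existence of such an expansion with a quantified remainder, so the argument is circular as stated.

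The paper breaks this circularity by a different mechanism, and it is also the only place where the hypothesis $\alpha\le\alpha_{\rm FS}$ --- which your argument never actually uses --- enters. One splits $\varphi=\varphi_0+(\varphi-\varphi_0)$ with $\varphi_0(s)=\iM{\varphi}$, so the angular part is mean-free on $\M$ for each $s$; one then derives second-order differential inequalities for quantities such as $\chi(s)=\frac12\iM{|\partial_s\psi|^2}$ (with $\psi=e^{\sqrt\Lambda|s|}\,(\varphi-\varphi_0)$) and $\frac12\iM{|\nabla\varphi|^2}$, in which the Poincar\'e inequality on $\M$ contributes the term $2\,\lambda_1^{\M}\,\chi$; an integral representation for $\zeta=\sqrt\chi$ then yields decay at the rate $e^{(\sqrt{\Lambda+\lambda_1^{\M}}-\sqrt\Lambda)\,s}$. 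The gain $\sqrt{\Lambda+\lambda_1^{\M}}-\sqrt\Lambda\ge\alpha$ is exactly equivalent to $\alpha\le\alpha_{\rm FS}$, and this is what produces the exponents in (i)--(v). (The paper also first disposes of the case $n\ge4$ by a cruder Kelvin-transform argument giving $|\mathsf b(r)|,\,|\mathsf c(r)|\le O(r^{n-4})$, so that (i)--(v) are only needed when $n<4$; and at $r\to+\infty$ no cancellation is required at all, since the pointwise bounds already give $O(r^{2-n})$ with $n>2$.) Until you supply an argument of this type, or some other quantitative substitute for the asymptotic expansion you assume, the proof is incomplete.
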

%------------------------------------------------------------------------------
\begin{proof} We say that $f(s,\omega)\sim g(s,\omega)$ as $s\to+\infty$ (resp.~Ê$s\to-\infty$) if the ratio $f/g$ is bounded from above and from below by positive constants, independent of $\omega$, and for $s$ (resp.~$-s$) large enough.

There are some easy consequences of the change of variables~\eqref{pvarphi} and of Proposition~\ref{Prop:estimates}:
since $\varphi(s,\omega)\sim e^{\sqrt\Lambda\,s}$ as $s\to-\infty$, $\varphi(-\log r/\alpha,\omega)\sim r^{-2/(p-2)}$ as $r\to+\infty$ and it is straightforward to check that $\mathsf p''$, $\mathsf p'/r$, $\mathsf p/r^2$, $\nabla\mathsf p'/r$ and $\nabla\mathsf p/r^2$ are bounded as $r\to+\infty$. As a consequence, we obtain that
\[
\mathsf b(r),\; \mathsf c(r)\le O(r^{2-n})\to0\quad\mbox{as}\quad r\to+\infty
\]
because, by assumption, we know that $n>d\ge2$.

To complete the proof, one has to establish that $\lim_{r\to0_+}\mathsf b(r)=\lim_{r\to0_+}\mathsf c(r)=0$. A convenient method for that relies on the Kelvin transformation. Let
\[\label{Kelvinvv}
u(r,\omega)=r^{-2n}\,\widetilde u(R,\omega)\quad\mbox{and}\quad\mathsf p(r,\omega)=r^2\,\widetilde{\mathsf p}(R,\omega)
\]
with $R=1/r$. It is a remarkable fact to observe that $\widetilde u$ solves the same equation as $u$, which can be easily seen after applying the Emden-Fowler transformation $w(r,\omega)=r^{2-n}\,\widetilde w(R,\omega)$ to the function $w$ such that $u(r,\omega)=|w(r,\omega)|^\frac{2\,n}{n-2}$. With evident notations if $\varphi$ and $\widetilde\varphi$ are given in terms of $w$ and $\widetilde w$ by~\eqref{EF}, then $\widetilde\varphi(s,\omega)=\varphi(-s,\omega)$ for any $(s,\omega)\in\R\times\M$ and it is clear that equation~\eqref{Eqn:Cylinder} is invariant under the transformation $s\mapsto-\,s$.

According to Proposition~\ref{Prop:estimates}, $\mathsf p(r,\omega)=r^2\,\widetilde{\mathsf p}(1/r,\omega)$ is bounded away from $0$ and from infinity, and, uniformly in $\omega$,
\begin{eqnarray*}
&&|\mathsf p'(r,\omega)|=|2\,r\,\widetilde{\mathsf p}\(\tfrac1r,\omega\)-\widetilde{\mathsf p}'\(\tfrac1r,\omega\)|\le O\left(\frac1r\(\sqrt\Lambda-\frac{\widetilde\varphi'(s,\omega)}{\widetilde\varphi(s,\omega)}\)\right)\,,\\
&&\tfrac1r\,|\nabla\mathsf p(r,\omega)|=r\,|\nabla\,\widetilde{\mathsf p}\(\tfrac1r,\omega\)|\le O\left(\frac1r\,\frac{\nabla\widetilde\varphi(s,\omega)}{\widetilde\varphi(s,\omega)}\right)\,,\\
\end{eqnarray*}
which are of order at most $1/r$. Moreover, also uniformly in $\omega$,
\begin{eqnarray*}
&&|\mathsf p''(r,\omega)|=|2\,\widetilde{\mathsf p}\(\tfrac1r,\omega\)-\frac2r\,\widetilde{\mathsf p}'\(\tfrac1r,\omega\)+\frac1{r^2}\,\widetilde{\mathsf p}''\(\tfrac1r,\omega\)|\\
&&\hspace*{2cm}\le O\left(\frac1{r^2}\(\frac{\widetilde\varphi''(s,\omega)}{\widetilde\varphi(s,\omega)}-\,\frac p2\,\frac{|\widetilde\varphi'(s,\omega)|^2}{|\widetilde\varphi(s,\omega)|^2}+\,\alpha\,\frac{\widetilde\varphi'(s,\omega)}{\widetilde\varphi(s,\omega)}\)\right)\,,\\
&&|\tfrac1r\,\nabla\mathsf p'(r,\omega)-\tfrac1{r^2}\,\nabla\mathsf p(r,\omega)|=|\nabla\,\widetilde{\mathsf p}\(\tfrac1r,\omega\)-\tfrac1r\,\nabla\,\widetilde{\mathsf p}'\(\tfrac1r,\omega\)|\\
&&\hspace*{2cm}\le O\left(\frac1{r^2}\(\frac p2\,\frac{\widetilde\varphi'(s,\omega)\,\nabla\widetilde\varphi(s,\omega)}{|\widetilde\varphi(s,\omega)|^2}-\frac{\nabla\widetilde\varphi'(s,\omega)}{\widetilde\varphi(s,\omega)}\)\right) \,,\\
&&\frac1{r^2}\,|\Delta\,\mathsf p(r,\omega)|=|\Delta\,\widetilde{\mathsf p}\(\tfrac1r,\omega\)|\le O\left(\frac1{r^2}\(\frac{\Delta\widetilde\varphi(s,\omega)}{\widetilde\varphi(s,\omega)}-\,\frac p2\,\frac{|\nabla\widetilde\varphi(s,\omega)|^2}{|\widetilde\varphi(s,\omega)|^2}\)\right)\,, \\
\end{eqnarray*}
which are of order at most $1/r^2$. This shows that $|\mathsf b(r)|$, $|\mathsf c(r)|\le O(r^{n-4})$ and concludes the proof if $4\le d<n$. When $d=2$ or $3$ and $p>4$, \emph{i.e.}, $n<4$, more detailed estimates are needed. We will actually prove Properties (i)--(v) as $r\to0_+$. Using the fact that $\widetilde\varphi$ and $\varphi$ solve the same equation, this amounts to prove that\begin{enumerate}
\item[(i)] $\iM{\left|\frac{\varphi'(s,\omega)}{\varphi(s,\omega)}-\sqrt\Lambda\right|^2}\le O(e^{2\alpha s})$,
\item[(ii)] $\iM{\left|\frac{\nabla\varphi(s,\omega)}{\varphi(s,\omega)}\right|^2}\le O(e^{2\alpha s})$,
\item[(iii)] $\iM{\left|\frac{\varphi''(s,\omega)}{\varphi(s,\omega)}-\,\frac p2\,\frac{|\varphi'(s,\omega)|^2}{|\varphi(s,\omega)|^2}+\,\alpha\,\frac{\varphi'(s,\omega)}{\varphi(s,\omega)}\right|^2}\le O(e^{2\alpha s})$,
\item[(iv)] $\iM{\left|\frac p2\,\frac{\varphi'(s,\omega)\,\nabla\varphi(s,\omega)}{|\varphi(s,\omega)|^2}-\frac{\nabla\varphi'(s,\omega)}{\varphi(s,\omega)}\right|^2}\le O(e^{2\alpha s})$,
\item[(v)] $\iM{\left|\frac{\Delta\varphi(s,\omega)}{\varphi(s,\omega)}-\,\frac p2\,\frac{|\nabla\varphi(s,\omega)|^2}{|\varphi(s,\omega)|^2}\right|^2}\le O(e^{2\alpha s})$,
\end{enumerate}
as $s\to-\infty$.

\medskip\noindent\emph{Proof of\/} (i). Let us consider a positive solution $\varphi$ to~\eqref{Eqn:Cylinder} and define on $\R$ the function
\[
\varphi_0(s)=\iM{\varphi(s,\omega)}\,.
\]
By integrating~\eqref{Eqn:Cylinder} on $\M$, we know that $\varphi_0$ solves
\[
-\,\varphi_0''+\Lambda\,\varphi_0=\iM{\varphi^{p-1}}=:h_0(s)\sim e^{-(p-1)\sqrt\Lambda\,|s|}\quad\mbox{in}\quad\R\,.
\]
From the integral representation
\[
\varphi_0(s)=\frac{e^{-\sqrt{\Lambda}s}}{2\,\sqrt{\Lambda}}\int_{-\infty}^se^{\sqrt{\Lambda}t}\,h_0(t)\,dt+\frac{e^{\sqrt{\Lambda}s}}{2\,\sqrt{\Lambda}}\int_s^\infty e^{-\sqrt{\Lambda}t}\,h_0(t)\,dt\,,
\]
we deduce that $\varphi_0(s)\sim e^{\sqrt{\Lambda}s}\sim\varphi(s,\omega)$ as $s\to-\infty$ and
\[
\frac{\varphi_0'(s)-\sqrt\Lambda\,\varphi_0(s)}{\varphi(s,\omega)}\sim -\,e^{-2\sqrt{\Lambda}s}\int_{-\infty}^se^{\sqrt{\Lambda}t}\,h_0(t)\,dt=O(e^{2\alpha s})\quad\mbox{as}\quad s\to-\infty\,.
\]
If we define $\psi(s,\omega):=e^{\sqrt\Lambda\,|s|}\,\big(\varphi(s,\omega)-\varphi_0(s)\big)$, we may observe that it is bounded and solves the equation
\be{Eqn:psi}
-\,\partial^2_s\psi-\,\Delta\,\psi-\,2\,\sqrt\Lambda\,\partial_s\psi=e^{\sqrt\Lambda\,|s|}\,\big(\varphi^{p-1}-\varphi_0^{p-1}\big)=:H\le O(e^{-2\alpha|s|})
\ee
and
\[
\frac{\partial_s\varphi(s,\omega)}{\varphi(s,\omega)}-\sqrt\Lambda=O(e^{2\alpha s})+\frac{\partial_s\psi(s,\omega)}{e^{-\sqrt\Lambda\,s}\,\varphi(s,\omega)}\quad\mbox{as}\quad s\to-\infty\,.
\]
We recall that $e^{-\sqrt\Lambda\,s}\,\varphi(s,\omega)$ is bounded from above and from below by positive constants as $s\to-\infty$, and $|e^{-\sqrt\Lambda\,s}\,\partial_s\varphi(s,\omega)|$ is bounded above. As a consequence, we know that $\partial_s H=O(e^{2\alpha s})$ as $s\to-\infty$. Hence we know that
\[
\Big|\frac{\partial_s\varphi(s,\omega)}{\varphi(s,\omega)}-\sqrt\Lambda\Big|\le C\,|\partial_s\psi(s,\omega)|+O(e^{2\alpha s})\,,
\]
where $C$ is a constant.
\medskip We differentiate~\eqref{Eqn:psi} with respect to $s$. The function $\partial_s\psi$ solves
\be{Eqn:psis}
-\,\partial^2_s(\partial_s\psi)-\,\Delta\,(\partial_s\psi)-\,2\,\sqrt\Lambda\,\partial_s(\partial_s\psi)=\partial_sH\,,
\ee
with
\[
|\partial_sH(s,\omega)|\le O(e^{2\alpha s})\quad\mbox{as}\quad s\to-\infty\,.
\]
Let us define
\[
\chi_1(s):=\frac12\iM{|\partial_s\psi|^2}\,,
\]
multiply~\eqref{Eqn:psis} by $\partial_s\psi$ and integrate on $\M$. Using
\[
\chi_1'=\iM{\partial_s\psi\,\partial_s^2\psi}
\]
and
\[
\chi_1''=\iM{\partial_s\psi\,\partial^2_s(\partial_s\psi)}+\iM{|\partial_s^2\psi|^2}\,,
\]
we obtain that the nonnegative function $\chi_1$ solves
\be{eq:chi}
-\,\chi_1''+\iM{|\partial_s^2\psi|^2}+\underbrace{\iM{\(|\nabla(\partial_s\psi)|^2-\lambda_1\,|\partial_s\psi|^2\)}}_{\ge0}+\,2\,\lambda_1\,\chi_1-\,2\,\sqrt\Lambda\,\chi_1'=h_1\,,
\ee
where the Poincar\'e inequality
\[
\iM{|\nabla(\partial_s\psi)|^2}\ge\lambda_1\iM{|\partial_s\psi|^2}
\]
holds because $\iM{\partial_s\psi}=0$ for any $s\in\R$, by definition of $\psi$, and where
\[
h_1:=\iM{\partial_sH\,\partial_s\psi}\,.
\]
{}From the Cauchy-Schwarz inequality, we deduce that
\begin{multline*}
|\chi_1'(s)|^2=\(\iM{\partial_s\psi\,\partial_s^2\psi}\)^2\\
\le\iM{|\partial_s\psi|^2}\iM{|\partial_s^2\psi|^2}=2\,\chi_1(s)\iM{|\partial_s^2\psi|^2}\,,
\end{multline*}
that is
\[
\iM{|\partial_s\psi|^2}\ge\frac{|\chi_1'|^2}{2\,\chi_1}\,,
\]
and reinject this estimate in~\eqref{eq:chi} so that
\[
-\,\chi_1''+\frac{|\chi_1'|^2}{2\,\chi_1}+\,2\,\lambda_1\,\chi_1-\,2\,\sqrt\Lambda\,\chi_1'\le h_1\,.
\]
Let $\zeta_1=\sqrt{\chi_1}$ and observe that it solves
\[
-\,\zeta_1''+\,\lambda_1\,\zeta_1-\,2\,\sqrt\Lambda\,\zeta_1'\le\frac{h_1}{2\,\zeta_1}\,.
\]
By Cauchy-Schwarz inequality, for $s\to -\infty$,\[|h_1 (s)| \le \sqrt{2}\, \(\iM{|\partial_sH|^2}\)^{1/2}\!\!\zeta_1(s)\le C\, e^{2\alpha s}\, \zeta_1(s)\,.
\] Using once more an integral representation of the solution, with $\mu:=\sqrt{\Lambda+\lambda_1}$, it is easy to check that
\[
e^{\sqrt{\Lambda} s}\,\zeta_1(s)\le\frac{e^{-\mu s}}{4\,\mu}\int_{-\infty}^se^{(\mu+\sqrt{\Lambda})t}\,\frac{h_1(t)}{\zeta_1(t)}\,dt+\frac{e^{\mu s}}{4\,\mu}\int_s^\infty e^{(\sqrt{\Lambda}-\mu)t}\,\frac{h_1(t)}{\zeta_1(t)}\,dt \,,
\]
which is enough to deduce that $\zeta_1(s)\le O\big(e^{(\mu - \sqrt\Lambda) s}\big)$ as $s\to-\infty$. Note that the condition that
\[
\mu - \sqrt\Lambda= \sqrt{\Lambda+\lambda_1} - \sqrt\Lambda\ge \alpha
\]
is equivalent to the inequality $\alpha \le \alpha_{\rm FS}$.
Hence we have shown that for $\alpha \le \alpha_{\rm FS}$,
\be{estchi1}
\chi_1(s)\le O\(e^{2\alpha s}\)\quad\mbox{as}\quad s\to-\infty\,.
\ee
This ends the proof of (i).

\medskip\noindent\emph{Proof of\/} (ii). By differentiating~\eqref{Eqn:Cylinder} with respect to $\omega$, we obtain
\[
-\,\partial^2_s\,\nabla\varphi-\,\nabla\,\Delta\,\varphi+\Lambda\,\nabla\varphi=(p-1)\,\varphi^{p-2}\,\nabla\varphi\quad\mbox{in}\quad\mathcal C\,.
\]
We proceed as in case (i). With similar notations, by defining
\[
\chi_2(s):=\frac12\iM{|\nabla\varphi|^2}\,,
\]
after multiplying the equation by $\nabla\varphi$ and using the fact that
\[
\iM{(\Delta\varphi)^2}\ge\lambda_1\iM{|\nabla\varphi|^2}
\]
as, \emph{e.g.}, in \cite[Lemma~7]{MR3229793} and a Cauchy-Schwarz inequality, we obtain
\[
-\,\chi_2''+\frac{|\chi_2'|^2}{2\,\chi_2}+\,2\,(\Lambda+\lambda_1)\,\chi_2\le h_2 \,
\]
with $h_2 := (p-1)\,\iM{\varphi^{p-2} |\nabla \varphi|^2} = O\big(e^{-\,p\,\sqrt\Lambda\,|s|}\big)$. The function $\zeta_2=\sqrt{\chi_2}$ satisfies
\[
-\,\zeta_2''+\,(\Lambda+\lambda_1)\,\zeta_2\le\frac{h_2}{2\,\zeta_2}\,.
\]
By the Cauchy-Schwarz inequality, $h_2/\zeta_2 = O\big(e^{-\,(p-1)\,\sqrt\Lambda\,|s|}\big)$. We easily deduce that
\[
\chi_2(s)\le O\big(e^{2\sqrt{\Lambda+\lambda_1}s}\big)\quad\mbox{as}\quad s\to-\infty\,.
\]
Finally, we observe that $\sqrt{\Lambda+\lambda_1}-\sqrt\Lambda\ge\alpha$ for any $\Lambda\in(0,\Lambda_{\,\rm FS})$ and $\varphi(s,\omega)\sim e^{\sqrt\Lambda\,s}$ as $s\to-\infty$, which ends the proof of (ii).

\medskip\noindent\emph{Proof of\/} (iii). With $\psi=e^{-\sqrt{\Lambda}s}\,(\varphi-\varphi_0)$ and $\varphi_0(s)=\iM{\varphi(s,\omega)}$ as in case (i), we can check that
\begin{multline}
\frac{\varphi''}\varphi-\,\frac p2\,\frac{|\varphi'|^2}{|\varphi|^2}+\,\alpha\,\frac{\varphi'}\varphi= O(e^{2\alpha s})+\frac{e^{\sqrt\Lambda s}}{\varphi}\,\partial_s^2\psi
\\ +\((2-p)\,\sqrt\Lambda +\alpha -p\,\frac{\varphi'_0-\sqrt\Lambda \varphi_0}{\varphi}\)\,\frac{e^{\sqrt\Lambda s}}{\varphi}\,\partial_s\psi - \frac{p}2\, \Big( \frac{e^{\sqrt\Lambda s}}{\varphi} \Big)^2 |\partial_s\psi|^2\,\,.
\end{multline}
Because according to Proposition~\ref{Prop:estimates} $\partial_s\psi$ and $\frac{\partial_s\varphi}\varphi$ are bounded as $s\to -\infty$, and taking into account \eqref{estchi1}, it remains to prove that
\[
\chi_3(s):=\frac12\iM{|\partial_s^2\psi|^2}
\]
is of order $O(e^{2\alpha s})$. We differentiate~\eqref{Eqn:psi} twice with respect to $s$. After multiplying the equation by $\partial_s^2\psi$ and using the fact that
\[
\iM{|\nabla(\partial_s^2\psi)|^2}\ge\lambda_1\iM{|\partial_s^2\psi|^2}
\]
because $\iM{\partial_s^2\psi}=0$, we obtain
\[
-\,\chi_3''+\frac{|\chi_3'|^2}{2\,\chi_3}+\,2\,\lambda_3\,\chi_3-\,2\,\sqrt\Lambda\,\chi_3'\le h_3\,,
\]
with $
h_3:=\iM{\partial^2_sH\,\partial^2_s\psi}\,.
$ With the same arguments as in case (i), we deduce that
\[
\chi_3(s)\le O\big(e^{2(\sqrt{\Lambda+\lambda_1}-\sqrt\Lambda)\,s}\big)\le O(e^{2\alpha s})\quad\mbox{as}\quad s\to-\infty\,.
\]
This ends the proof of (iii).

\medskip\noindent\emph{Proof of\/} (iv). The term $\frac{\varphi'(s,\omega)\,\nabla\varphi(s,\omega)}{|\varphi(s,\omega)|^2}$ is easily bounded after integrating with respect to $\omega$ because $\frac{\partial_s\,\varphi}\varphi$ is bounded according to Proposition~\ref{Prop:estimates} and by (ii). As for the term $\frac{\nabla\varphi'(s,\omega)}{\varphi(s,\omega)}$, we proceed like in case (ii). By applying the operator $\nabla\partial_s$ to~\eqref{Eqn:Cylinder}, we obtain
\begin{multline*}
-\,\partial^2_s\,(\nabla\partial_s\varphi)-\,\nabla\Delta(\partial_s\varphi)+\Lambda\,\nabla\partial_s\varphi=\partial_s\nabla H\\
=(p-1)\,\varphi^{p-2}\(\nabla\partial_s\varphi+(p-2)\,\frac{\partial_s\varphi\,\nabla\varphi}\varphi\)\;\mbox{in}\;\mathcal C\,.
\end{multline*}
With similar notations, by defining
\[
\chi_4(s):=\frac12\iM{|\nabla\partial_s\varphi|^2}\,,
\]
after multiplying the equation by $\nabla\partial_s\varphi$ and using Poincar\'e inequality
\[
\iM{|\Delta(\partial_s\varphi)|^2}\ge\lambda_1\iM{|\nabla(\partial_s\varphi)|^2}
\]
we obtain
\[
-\,\chi_4''+\frac{|\chi_4'|^2}{2\,\chi_4}+\,2\,(\Lambda+\lambda_1)\,\chi_4\le h_4\,,
\]
with $h_4:=\iM{\partial_s \nabla H\,\partial_s \nabla \varphi}$. With the same arguments, we deduce that
\[
\chi_4(s)\le O\big(e^{2(\sqrt{\Lambda+\lambda_1}\,s}\big)\quad\mbox{as}\quad s\to-\infty\,.
\]
We end the proof of (iv) by observing that $\sqrt{\Lambda+\lambda_1}-\sqrt\Lambda\ge\alpha$ for any $\Lambda\in(0,\Lambda_{\,\rm FS})$ and $\varphi(s,\omega)\sim e^{\sqrt\Lambda\,s}$ as $s\to-\infty$.

\medskip\noindent\emph{Proof of\/} (v). By applying the Laplace-Beltrami operator to~\eqref{Eqn:Cylinder}, we obtain
\[
-\,\partial^2_s\,(\Delta\varphi)-\,\Delta^2\varphi+\Lambda\,\Delta\varphi=\Delta H=(p-1)\,\varphi^{p-2}\(\Delta\varphi+(p-2)\,\frac{|\nabla\varphi|^2}\varphi\)\quad\mbox{in}\quad\mathcal C\,.
\]
We proceed as in case (ii). With similar notations, by defining
\[
\chi_5(s):=\frac12\iM{|\Delta\varphi|^2}\,,
\]
after multiplying the equation by $\Delta\varphi$ and using the fact that
\[
-\iM{\Delta\varphi\,\Delta^2\varphi}=\iM{|\nabla\Delta\varphi|^2}\ge\lambda_1\iM{|\Delta\varphi|^2}\,,
\]
we obtain
\[
-\,\chi_5''+\frac{|\chi_5'|^2}{2\,\chi_5}+\,2\,(\Lambda+\lambda_1)\,\chi_5\le h_5
\]
with $h_5:= \iM{\Delta H \, \Delta \varphi}$. With the same arguments, we deduce that
\[
\chi_5(s)\le O\big(e^{2(\sqrt{\Lambda+\lambda_1}\,s}\big)\quad\mbox{as}\quad s\to-\infty\,,\]
and use again the fact that $\sqrt{\Lambda+\lambda_1}-\sqrt\Lambda\ge\alpha$ for any $\Lambda\in(0,\Lambda_{\,\rm FS})$ and $\varphi(s,\omega)\sim e^{\sqrt\Lambda\,s}$ as $s\to-\infty$.
The estimate for the other term follows from (ii). This ends the proof of (v).
\end{proof}

%%%%%%%%%%%%%%%%%%%%%%%%%%%%%%%%%%%%%%%%%%%%%%%%%%%%%%%%%%%%%%%%%%%%%%%%%%%%%%%
%%%%%%%%%%%%%%%%%%%%%%%%%%%%%%%%%%%%%%%%%%%%%%%%%%%%%%%%%%%%%%%%%%%%%%%%%%%%%%%
\bigskip\noindent{\bf Acknowledgements.} This work has been partially supported by the projects \emph{STAB} and \emph{Kibord} (J.D.) of the French National Research Agency (ANR). M.L.~has been partially supported by the NSF grant DMS-1301555.
\par\medskip\noindent{\small\copyright\,2015 by the authors. This paper may be reproduced, in its entirety, for non-commercial purposes.}

%%%%%%%%%%%%%%%%%%%%%%%%%%%%%%%%%%%%%%%%%%%%%%%%%%%%%%%%%%%%%%%%%%%%%%%%%%%%%%%
%%%%%%%%%%%%%%%%%%%%%%%%%%%%%%%%%%%%%%%%%%%%%%%%%%%%%%%%%%%%%%%%%%%%%%%%%%%%%%%
%\nocite*
\bibliographystyle{amsplain}
%\bibliography{../References}
%%%%%%%%%%%%%%%%%%%%%%%%%%%%%%%%%%%%%%%%%%%%%%%%%%%%%%%%%%%%%%%%%%%%%%%%%%%%%%%
%%%%%%%%%%%%%%%%%%%%%%%%%%%%%%%%%%%%%%%%%%%%%%%%%%%%%%%%%%%%%%%%%%%%%%%%%%%%%%%

%%%%%%%%%%%%%%%%%%%%%%%%%%%%%%%%%%%%%%%%%%%%%%%%%%%%%%%%%%%%%%%%%%%%%%%%%%%%%%%
%%%%%%%%%%%%%%%%%%%%%%%%%%%%%%%%%%%%%%%%%%%%%%%%%%%%%%%%%%%%%%%%%%%%%%%%%%%%%%%
\end{document}